\documentclass[11pt]{article}
\usepackage{epsf}
\usepackage{epsfig}
\usepackage{enumerate}
\usepackage{url}

\voffset=-2cm
\hoffset=-1.5cm
\textwidth=16cm
\textheight=23cm

\usepackage{amsmath,amsfonts,amssymb,amsthm}
\usepackage{wasysym}
\usepackage[retainorgcmds]{IEEEtrantools}

\def\pathPic{.}

\def\vp{\varphi}
\def\<{\langle}
\def\>{\rangle}

\def\sm{\setminus}

\def\cE{{\cal E}}
\def\cF{{\cal F}}

\def\cM{{\cal M}}

\def\cO{{\cal O}}
\def\cP{{\cal P}}

\def\cT{{\cal T}}

\def\cZ{{\cal Z}}
\def\Chi{\raise .3ex
\hbox{\large $\chi$}} \def\vp{\varphi}
\def\lsima{\hbox{\kern -.6em\raisebox{-1ex}{$~\stackrel{\textstyle<}{\sim}~$}}\kern -.4em}
\def\lsim{\hbox{\kern -.2em\raisebox{-1ex}{$~\stackrel{\textstyle<}{\sim}~$}}\kern -.2em}
\def\({\Bigl (}
\def\){\Bigr )}
\def\({\Bigl (}
\def\){\Bigr )}

\newcommand{\be}{\begin{equation}}
\newcommand{\ee}{\end{equation}}
\newcommand{\bea}{$$ \begin{array}{lll}}
\newcommand{\eea}{\end{array} $$}
\newcommand{\bi}{\begin{itemize}}
\newcommand{\ei}{\end{itemize}}
\newcommand{\iref}[1]{(\ref{#1})}
\newtheorem{theorem}{Theorem}[section]
\newtheorem{remark}[theorem]{Remark}
\newtheorem{lemma}[theorem]{Lemma}
\newtheorem{proposition}[theorem]{Proposition}
\newtheorem{corollary}[theorem]{Corollary}
\newtheorem{definition}[theorem]{Definition}

\newtheorem{prop}[theorem]{Proposition}

\newtheorem*{remark*}{Remark}

\def\I{{\rm \hbox{I\kern-.2em\hbox{I}}}}
\def\P{{\rm \hbox{I\kern-.2em\hbox{P}}}}
\def\sP{{\rm \hbox{\scriptsize I\kern-.2em\hbox{\scriptsize P}}}}
\def\H{{\rm \hbox{I\kern-.2em\hbox{H}}}}
\def\R{{\rm \hbox{I\kern-.2em\hbox{R}}}}
\def\N{{\rm \hbox{I\kern-.2em\hbox{N}}}}
\def\Z{{\rm {{\rm Z}\kern-.28em{\rm Z}}}}
\def\C{{\rm \hbox{C\kern -.5em {\raise .32ex \hbox{$\scriptscriptstyle
|$}}\kern-.22em{\raise .6ex \hbox{$\scriptscriptstyle |$}}\kern .4em}}}
\def\sC{{\rm \hbox{\scriptsize \rm C\kern -.6em {\raise .4ex \hbox{$\scriptscriptstyle \scriptsize|$}}\kern-.22em{\raise .5ex \hbox{$\scriptscriptstyle \scriptsize |$}}\kern .4em}}}


\def\Ra{\Rightarrow}
\def\ve{\varepsilon}

\def\cO{\mathcal O}

\def\cE{\mathcal E}


\newcommand\trans{\mathrm T}

\def\cM{\mathcal M}

\def\bac{$$\left\{\begin{array}}
\def\eac{\end{array}\right.$$}
\def\beqa{\begin{eqnarray*}}
\def\eeqa{\end{eqnarray*}}

\def\gsim{\hbox{\kern -.2em\raisebox{-1ex}{$~\stackrel{\textstyle>}{\sim}~$}}\kern -.2em}

\DeclareMathOperator\Id{Id}

\DeclareMathOperator\Tr{Tr}

\DeclareMathOperator\interp{{\rm I}}




\DeclareMathOperator\GL{GL}


\newtheorem{question}{Question}
\newtheorem{programme}[question]{Programmation}

\def\bques{\begin{question}}
\def\eques{\end{question}}
\def\bprog{\begin{programme}}
\def\eprog{\end{programme}}

\usepackage{stmaryrd}



\def\II{\mathrm {I\kern-0.1exI}}


\newcommand\stext[1]{\ \text{ #1 } \ }





\def\sR{{\rm \hbox{\scriptsize I\kern-.2em\hbox{\scriptsize R}}}}


\def\sN{{\rm \hbox{\scriptsize I\kern-.2em\hbox{\scriptsize N}}}}
\def\sC{{\rm \hbox{\scriptsize \rm C\kern -.6em {\raise .4ex \hbox{$\scriptscriptstyle \scriptsize|$}}\kern-.22em{\raise .5ex \hbox{$\scriptscriptstyle \scriptsize |$}}\kern .4em}}}

\def\un{{\mathbf 1}}

\DeclareMathOperator\length{length}

\DeclareMathOperator\dist{d}
\DeclareMathOperator\distC{D}
\DeclareMathOperator\cotan{cotan}
\DeclareMathOperator\arcsinh{arcsinh}

\def\htheta{{\hspace{0.01cm}\theta}}
\def\gF{{\mathfrak F}}

\def\offset{u}
\def\trial{trial}
\def\accepted{accepted}

\usepackage{algpseudocode}
\usepackage{algorithm}

\begin{document}
\title{
Efficient Fast Marching with Finsler metrics.
} 
\author{Jean-Marie Mirebeau\footnote{CNRS, University Paris Dauphine, UMR 7534, Laboratory CEREMADE, Paris, France.}}
\maketitle
\date{}
\begin{abstract}

We study the discretization of the Escape Time problem: find the length of the shortest path joining an arbitrary point $z$ of a domain $\Omega$, to the boundary $\partial \Omega$. Path length is measured locally via a Finsler metric, potentially asymmetric and strongly anisotropic. 
This optimal control problem can be reformulated as a static Hamilton-Jacobi partial differential equation, or as a front propagation model. It has numerous applications, ranging from motion planning to image segmentation.

We introduce a new algorithm, Fast Marching using Anisotropic Stencil Refinement (FM-ASR), which addresses this problem on a two dimensional domain discretized on a cartesian grid. The local stencils used in our discretization are produced by arithmetic means, like in the FM-LBR \cite{M12}, a method previously introduced by the author in the special case of Riemannian metrics.
The complexity of the FM-ASR, in an average sense over all grid orientations, only depends (poly-)logarithmically on the anisotropy ratio of the metric, while most alternative approaches have a polynomial dependence.
Numerical experiments show, in several occasions, that the accuracy/complexity compromise is improved by an order of magnitude or more.
\end{abstract}


\section*{Introduction}

The Escape Time $\distC(z)$, from a point $z$ of the domain $\Omega$, is the length of the shortest path joining this point to the boundary $\partial \Omega$. Computing the escape time, and extracting an associated minimal path, is a task of obvious interest in motion planning control problems \cite{AltonMitchell12}. Yet this versatile problem has numerous other applications \cite{SethBook96}, including image classification \cite{PPKC10}, seismic imaging \cite{SV03} or the modeling of bio-physical phenomena \cite{SKD07}.
We are motivated by medical image segmentation problems, which often involve a strongly anisotropic \cite{BC10}, and potentially asymmetric \cite{MPAT08,ZSN09}, local measure of path length. 

From a theoretical point of view, the Escape Time problem can be reformulated as a static Hamilton-Jacobi, or Anisotropic Eikonal, Partial Differential Equation (PDE) \cite{SethBook96}. Its numerical discretization has attracted an important research effort, and includes the Fast Marching algorithm \cite{T95}, the Fast Sweeping method \cite{TsaiChengOsherZhao03}, and their numerous variants \cite{M12,AltonMitchell12,SV03,BR06}. As the ``Fast'' adjective indicates, performance is a crucial concern: in image processing applications, the discretization domain may contain millions of points (as many as image pixels), and CPU time should remain compatible with user interaction. Last but not least, as mentioned above, state of the art image processing applications involve strongly non-uniform, anisotropic and/or asymmetric measures of path length, which challenges available algorithms  \cite{BC10} and limits the parallelization potential \cite{RS09}.

This paper is devoted the introduction and study of a new algorithm, Fast Marching using Anisotropic Stencil Refinement (FM-ASR), a numerical solver for the two dimensional Escape Time problem discretized on a cartesian grid. Path length is measured locally through a given arbitrary Finsler metric $\cF$: a continuous map associating to each point $z\in \Omega$ an asymmetric norm $\cF_z$. The FM-ASR regards the discretization grid as a subset of the Lattice $\Z^2$, and uses arithmetic tools to produce the local stencils involved in the discretization of the associated Partial Differential Equation (PDE), which results in a huge complexity reduction in comparison with more classical approaches.
Note that the FM-LBR \cite{M12}, previously introduced by the author, shares this approach but is limited to metrics of Riemannian type (elliptic anisotropy). Non-Riemannian metrics arise in applications which take advantage of their potential asymmetry \cite{ZSN09,MPAT08}, or as the result of the homogenization of smaller scale Riemannian metrics \cite{OTV09}. 
The anisotropy ratios of an asymmetric norm $F : \R^2 \to \R_+$, and of a Finsler metric $\cF : \overline \Omega \times \R^2 \to \R_+$, are defined by 
\be
\label{defKappa}
\kappa(F) := \max_{|u|=|v|=1} \frac {F(u)}{F(v)}, \qquad \kappa(\cF) := \sup_{z\in \overline \Omega} \kappa(\cF_z).
\ee
The average complexity of the FM-ASR only depends (poly-)logarithmically on the anisotropy ratio of the given metric $\cF$, and is quasi-linear in the number $N$ of discretization points. In contrast, alternative approaches show a polynomial dependence either on $\kappa(\cF)$ \cite{AltonMitchell12,SV03}, or on $N$ \cite{BR06}, a difference clearly apparent in the numerical experiments presented in \S3.
In average over all grid orientations, and denoting $\ln^\alpha x := (\ln x)^\alpha$, the complexity of the FM-ASR is only $\cO(N \ln^3 \kappa(\cF) + N \ln N)$. 

\section{Description of the problem, algorithm, and main results}

The Escape Time problem is posed on a two dimensional bounded domain $\Omega \subset \R^2$, equipped with a Finsler metric $\cF$. This metric is a continuous map $\cF: \overline \Omega \times \R^2 \to \R_+$, $(z,u) \mapsto \cF_z(u)$, such that for each fixed $z\in \overline \Omega$, the restriction $u \mapsto \cF_z(u)$ is an asymmetric norm (i.e.\ a proper $1$-homogeneous convex function\footnote{%
Finsler metrics are often assumed to be smooth, and the local asymmetric norms to be strictly convex. 
These assumptions, tailored for the study of minimal paths, are not required in our analysis of the Escape Time problem.
}).
The length of a path $\gamma\in C^1([0,1], \overline \Omega)$ is measured through the metric $\cF$:
$$
\length(\gamma) := \int_0^1 \cF_{\gamma(t)}( \gamma'(t)) \, dt.
$$
Notable special cases include Isotropic metrics: $\cF_z(u) = n(z)\|u\|$, where the parameter $n(z)>0$ corresponds to the local index in geometrical optics.  Riemannian metrics have the form: $\cF_z(u) := \sqrt{\<u, \cM(z) u\>}$, where $\cM(z)$ is a symmetric positive definite matrix. 
Symmetric Finsler metrics are subject to the condition $\cF_z(-u) = \cF_z(u)$, for all $z\in \Omega$, $u \in \R^2$. See Figure \ref{fig:MetricTypes}. Here and below we denote by $\| \cdot \|$ and $\<\cdot,\cdot\>$ the canonical euclidean norm and scalar product on $\R^2$.

The length of a path $\gamma \in C^1([0,1], \overline \Omega)$, and of the reversed path $\hat \gamma : t \mapsto \gamma(1-t)$ may be different in the case of a  general asymmetric Finsler metric. 
This apparent oddity is entirely relevant in the study of motion planning under the influence of wind \cite{AltonMitchell12}. It is also essential in minimal path based image segmentation methods \cite{ZSN09,MPAT08}, where the right and left of the path should have different prescribed characteristics, since they respectively correspond to the foreground and background of the segmented object.
We introduce an asymmetric distance $\distC(\cdot, \cdot)$ on $\overline \Omega$
$$
\distC(x,y) := \inf \{ \length(\gamma); \, \gamma\in C^1([0,1], \overline \Omega), \, \gamma(0)=x,\, \gamma(1)=y\}.
$$
The solution of the Escape Time optimal control problem is the distance $\distC(\cdot)$ to the boundary:
for all $x\in \overline \Omega$ 
\be
\label{EscapeTime}
\distC(x) := \min\{\distC(x,y); \, y\in \partial \Omega\}. 
\ee
The function $D$ is also characterized as the unique viscosity solution \cite{L82} of the static Hamilton-Jacobi, or Anisotropic Eikonal, PDE (see e.g.\ \cite{OTV09} for a discussion on this reformulation)
\be
\label{eikonal}
\left\{
\begin{array}{rl}
\cF^*_z(-\nabla \distC(z)) = 1 & \text{for all } z \in \Omega,\\
\distC(z) = 0 & \text{for all } z\in \partial \Omega.
\end{array}
\right.
\ee
In the above equation, we denoted by $F^*$ the dual asymmetric norm of an asymmetric norm $F$ on $\R^2$, which is defined for all $u\in \R^2$ by
\be
\label{def:dual}
F^*(u) := \max_{v\neq 0} \frac{\<u,v\>}{F(v)}.
\ee
Consider the front defined by $\cE_t := D^{-1}(\{t\})$, $t \geq 0$, thus $\cE_0 = \partial \Omega$. The normal to this front, at a point $z \in \cE_t$ where $\distC$ is differentiable, is positively collinear to $\nabla \distC(z)$. The speed of the front along in this normal direction is inversely proportional to the gradient euclidean norm, $1/\|\nabla \distC(z)\|$, and is thus determined by the identity $\cF_z^*(-\nabla \distC(z)) = 1$. 
Note that the front may only go forward, and that the front speed cannot depend on global or high order properties of the front, such as its curvature. See \cite{SethBook96} for the applications, and limits, of this elementary front propagation model.\\

Since $\distC(\cdot, \cdot)$ is a path length (asymmetric) distance, one has for any point $x$ and neighborhood $V$, $x\in V \subset \Omega$, the identity 
\be
\label{pathLength}
\distC(x) = \min_{y \in \partial V} \distC(x,y)+\distC(y).
\ee
Indeed, any path $\gamma$ joining $x$ to $\partial \Omega$ must cross $\partial V$ at least once, at some point $y$.
The discretization of the Escape time problem is based on an approximation of the right hand side of \iref{pathLength}, the so-called Hopf-Lax update operator introduced in \cite{KushnerDupuis92}, see also \cite{SV03,BR06,M12}, and on a reinterpretation of this equation as a fixed point problem.


\begin{figure}
\begin{center}
\begin{tabular}{ccc}
\hspace{-0.6cm}
{\raise 0.1cm \hbox{
\includegraphics[width=5cm]{\pathPic/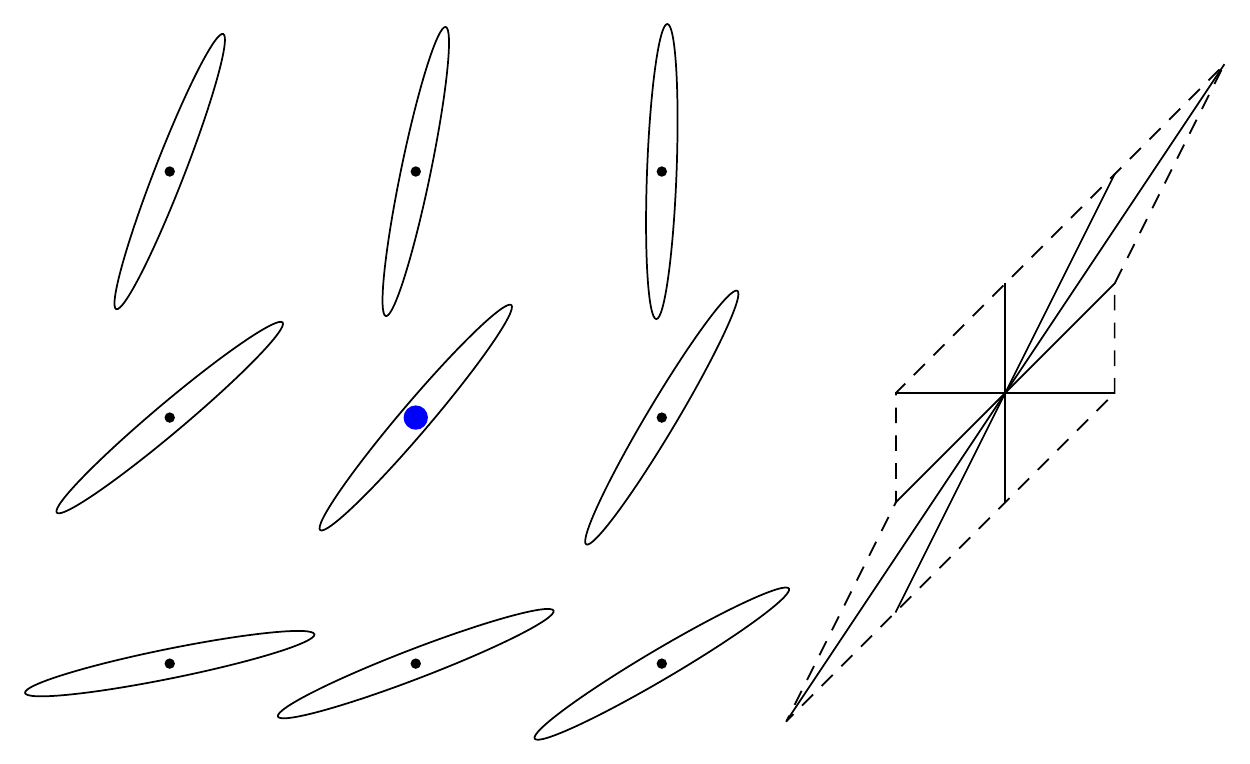}
}}
 &
\includegraphics[width=5cm]{\pathPic/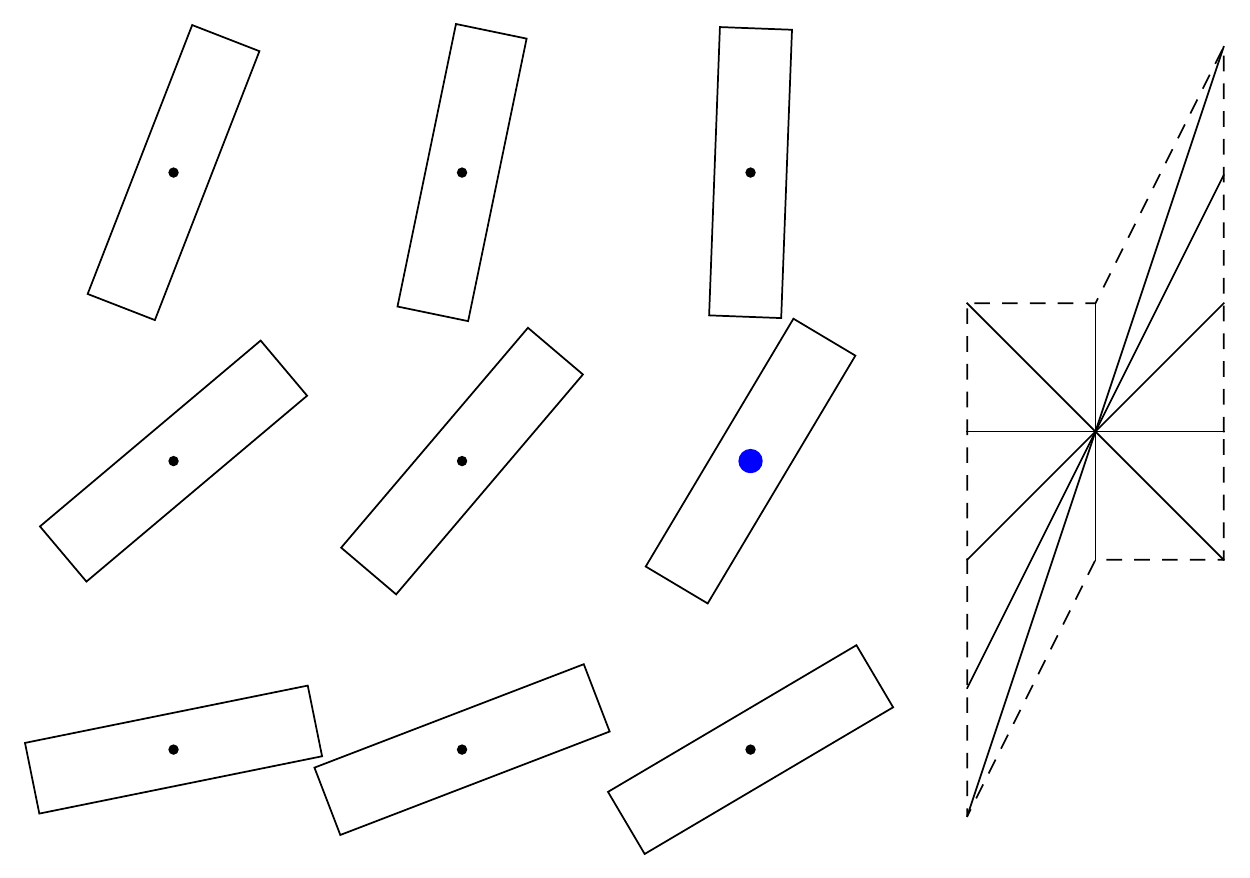} &
\hspace{0.2cm}
{\raise 0.3cm \hbox{
\includegraphics[width=5cm]{\pathPic/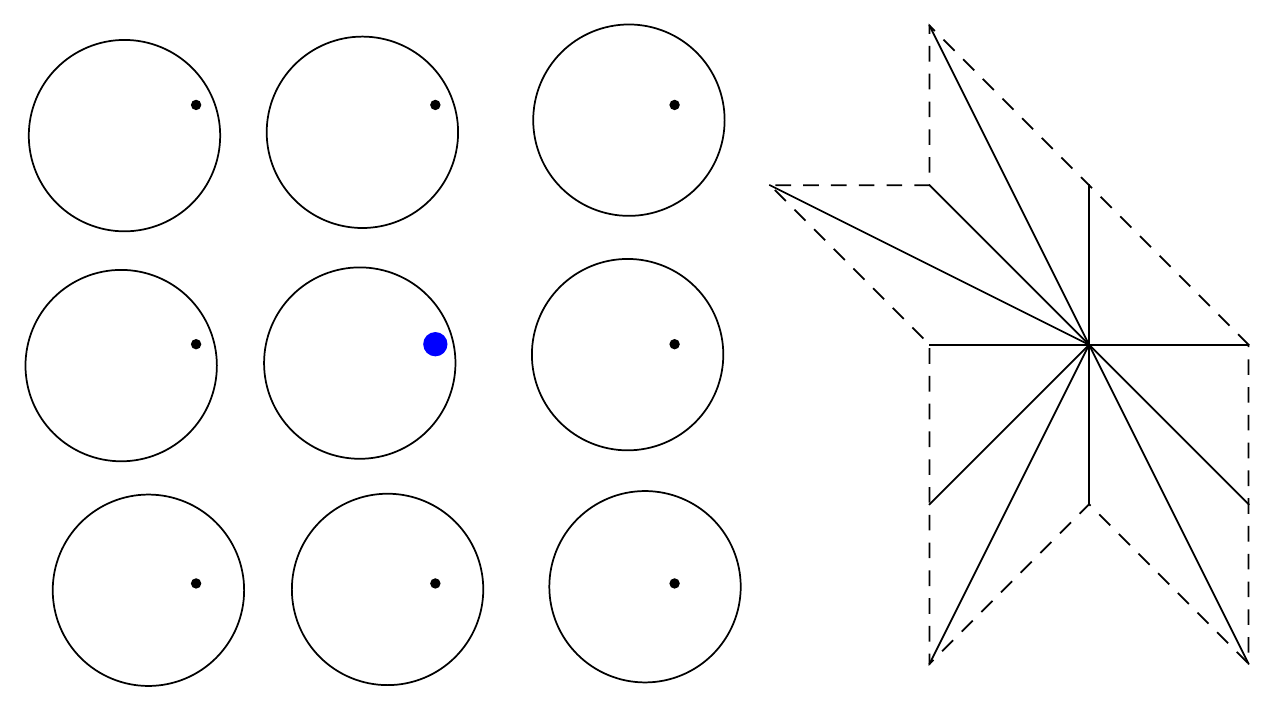}
}}
\end{tabular}
\end{center}
\caption{
\label{fig:MetricTypes}
A Finsler metric $\cF$, on a domain $\Omega\subset \R^2$, is the data of a continuously varying asymmetric norm $\cF_z$, at each point $z\in \overline \Omega$. 
The convex sets $\{u ; \, \cF_z(u) \leq 1\}$, at several points $z\in \Omega$, 
are used to visualize the metric $\cF$.
Finsler metrics can be of Riemannian type (left), symmetric (center), or asymmetric (right). 
The discretization of the Escape Time problem involves the construction of local stencils, three those produced by the FM-ASR are illustrated. 
}
\end{figure} 

For that purpose we introduce discrete sets $\Omega_*$ and $\partial \Omega_*$, devoted to the sampling of the continuous domain $\Omega$ and of its boundary $\partial \Omega$ respectively. In the FM-ASR, $\Omega_*$ and $\partial \Omega_*$ need to be subsets of the grid $\Z^2$, or of another orthogonal grid obtained by rescaling, rotating and offsetting $\Z^2$.
A small neighborhood $V_*(z)$ of each $z\in \Omega_*$, the stencil, is constructed \emph{under the form of a triangulation, of vertices in $\Omega_* \cup \partial \Omega_*$}. See Figures \ref{fig:MetricTypes} and \ref{fig:KappaTheta} for some stencils used in the FM-ASR, and Figure \ref{fig:classical} for more classical examples\footnote{The stencil construction of the AGSI and of the MAOUM requires a mesh of the underlying discrete domain $\Omega_*$, here a subset of $h\Z^2$ for some $h>0$. We triangulated this grid with rescaled translates of the triangle of vertices $(0,0),(1,0),(0,1)$, and of its symmetric with respect to the origin.}.
For any discretization point $x\in \Omega_*$, and any discrete map $\dist : \Omega_* \cup \partial \Omega_* \to \R_+$, we define the Hopf-Lax update
\be
\label{defLambda}
\Lambda(\dist,x) := \min_{y \in \partial V} \cF_x(y-x)+\interp_{V} \dist(y).
\ee
We denoted by $V$ the stencil $V_*(x)$, and by $\interp_V$ the piecewise linear interpolation operator on this triangulation. Note that $\Lambda(\dist,x)$ does not depend on the value of $\dist(x)$, but only on $\dist(y)$ for points $y$ of the discrete domain $\Omega_* \cup \partial \Omega_*$ which lie on the boundary of the stencil $V=V_*(x)$. In the following we set $\overline \R_+ := \R_+ \cup \{+\infty\} = [0, + \infty]$, adopt the convention $0 \times \infty =0$, and allow discrete maps to take the value $+\infty$. 

Numerical methods for the Escape Time problem construct a discrete approximation $\dist : \Omega_* \cup \partial \Omega_* \to \overline \R_+$ of the continuous solution $\distC$ of \iref{eikonal}, characterized by the following discrete fixed point problem:
\be
\label{eikonal_disc}
\left\{
\begin{array}{ll}
\dist(z) = \Lambda(\dist,z) & \text{for all } z\in \Omega_*,\\
\dist(z) = 0 & \text{for all } z\in \partial \Omega_*.
\end{array}
\right.
\ee

\begin{figure}
\hspace{1.25cm}
\includegraphics[width=2.5cm]{\pathPic/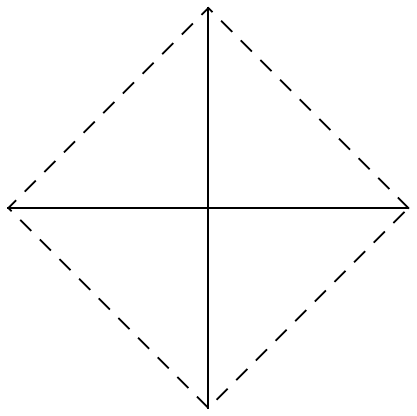}
\includegraphics[width=2.5cm]{\pathPic/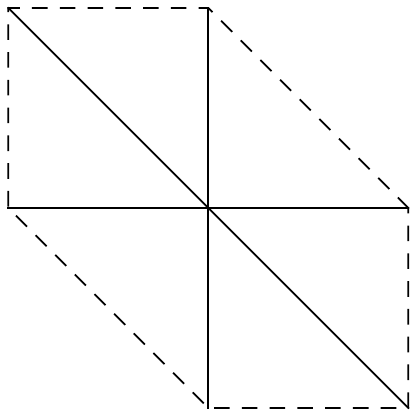}
\includegraphics[width=2.5cm]{\pathPic/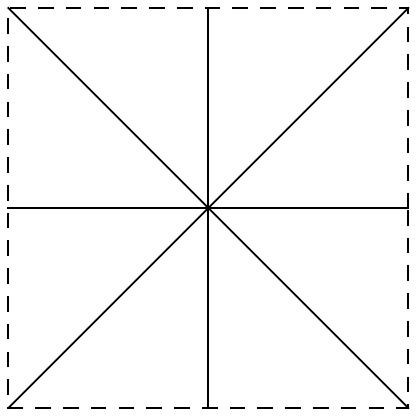}
\includegraphics[width=2.5cm]{\pathPic/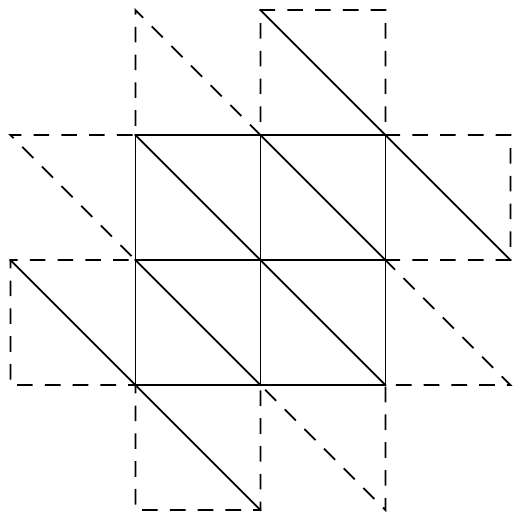}
\includegraphics[width=2.5cm]{\pathPic/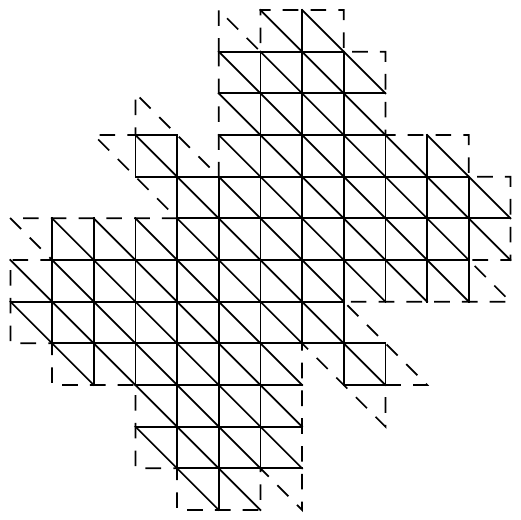}
\caption{
\label{fig:classical}
Stencil used in the classical Fast Marching algorithm (first), the AGSI \cite{BR06} (second), the FM-8 (third), and the MAOUM \cite{AltonMitchell12} at a grid point $z\in \Omega_*$ when the local anisotropy ratio $\kappa(\cF_z)$ is $1.5$ or $6$ (fourth and fifth, respectively). Algorithms compared in \S \ref{sec:num} to the FM-ASR.
}
\end{figure}
 
Note that the distance on a weighted graph obeys a system of equations of similar nature, except that the neighborhoods $V_*(z)$, of each vertex $z$, are given by the graph structure and are not dependent on the numerical method. Two well known algorithms can be used to solve this system and evaluate graph distances: the fast, single pass, Dijkstra algorithm, and the slower but more flexible (in that negative edge weights are allowed) Bellman-Ford algorithm. The algorithms used to solve the system \iref{eikonal_disc}, associated to the Escape Time problem, are inspired by these two methods, and the lack of negative edge weights in the graph setting is translated into a geometrical property of the stencils, named the Causality Property, see below.

Bellman-Ford inspired algorithms solve the system \iref{eikonal_disc} via Gauss-Siedel iteration: the replacement rule $\dist(z_k) ::= \Lambda(\dist,z_k)$, $k\geq 0$, is applied repeatedly to a mutable map $\dist: \Omega_*\cup\partial\Omega_* \to \overline \R_+$, until a prescribed convergence criterion is met. The map $\dist$ is initialized to $+\infty$ on $\Omega_*$, and $0$ on $\partial \Omega_*$. The choice of the sequence of points $z_k \in \Omega_*$, $k \geq 0$, depends on the method. This sequence enumerates the lines and columns of $\Omega_*$ in the fast sweeping methods \cite{TsaiChengOsherZhao03}, and is obtained via a priority queue in the Adaptive Gauss Siedel Iteration (AGSI) \cite{BR06}. The stencils are usually extremely simple, see Figure \ref{fig:classical}, left and center left. 
The complexity of these methods is linear in $N := \#(\Omega_*)$ in the special case of an \emph{Isotropic} metric, $\cO(\lambda(\cF) N)$ for the Fast Sweeping \cite{Zhao05}, but is polynomial in general, $\cO(\mu(\cF)N^{3/2})$ for the AGSI \cite{BR06}. The constants $\lambda(\cF)$ and $\mu(\cF)$ depend on global geometrical features of the metric.
The AGSI is popular, simple and quite efficient; it appears for reference in our numerical experiments.

We next introduce some geometrical concepts, and the Causality Property which is at the foundation of Dijkstra inspired solvers of the Escape Time problem: the Fast-Marching algorithm \cite{T95}, and its variants \cite{SV03,AltonMitchell12,M12}. When satisfied, this property allows to ``decouple'' and solve the discrete system \iref{eikonal_disc} in a non-iterative, single pass fashion, resulting in a complexity  independent of global features of the metric, and quasi-linear in the number $N$ of unknowns.

\begin{definition}
\label{def:acute}
Let $F$ be an asymmetric norm on $\R^2$. We say that two vectors $u,v\in \R^2\sm\{0\}$ form an $F$-acute angle if 
\be
\label{eq:acute}
F(u+\delta v) \geq F(u) \stext{ and } F(v+\delta u) \geq F(v) \ \text{ for all } \delta \geq 0.
\ee
\label{def:acuteMesh}
We say that a finite conforming triangulation $\cT$ is $F$-acute if
\begin{enumerate}[(i)]
\item The union of the triangles $T \in \cT$ is a neighborhood of the origin.
\item The vertices of each $T\in \cT$ lie on $\Z^2$, one of them is the origin $0$, and $T$ has area $1/2$.
\item The non-zero vertices of each triangle $T \in \cT$ form an $F$-acute angle.
\end{enumerate}
\end{definition}
In other words, two vectors form an $F$-acute angle if adding a positive multiple of one to the other increases its $F$ norm.  
The stencils $V_*(z)$ of the FM-ASR, at a point $z\in \Omega_*$, are built from (translated, rescaled, rotated) $F$-acute triangulations \eqref{defVz}. Condition (i) heuristically ensures that information is propagated in all directions in \iref{eikonal_disc}. Condition (ii) ensures that this information stays on the grid $\Omega_*$. In addition, this condition implies that a triangle $T\in \cT$ does not contain any point of $\Z^2$ except its vertices, which heuristically ensures that information does not ``jump over'' a subset of $\Omega_*$.

The core of this paper is devoted to the construction and study of an $F$-acute mesh $\cT(F)$, defined for each asymmetric norm $F$, see Figure \ref{fig:MetricTypes}, \ref{fig:KappaTheta}, and used to assemble the stencils of the FM-ASR. This mesh is produced by the following algorithm.
\begin{algorithm}
\label{algo:TF}
\textbf{Construction of the mesh $\cT(F)$}, associated to a given asymmetric norm $F$.\\ This mesh is star shaped with respect to the origin, see Figure \ref{fig:MetricTypes}. 
The sequence $L$ of its consecutive boundary vertices is generated as follows, using only two lists $L$ and $M$.\\
\begin{tabular}{llll}
 & \multicolumn{3}{l}{\textbf{Set} $L::=[(1,0)]$, $M::=[(1,0),(0,-1),(-1,0),(0,1)]$.}\\
 & \multicolumn{3}{l}{\textbf{While} $M$ is non-empty \textbf{do}}\\
 & & \multicolumn{2}{l}{Denote by $u,v$ the last element respectively of $L$ and $M$.} \\
 & & \multicolumn{2}{l}{\textbf{If} $u,v$ form an $F$-acute angle}\\
 & & & \textbf{then} remove $v$ from $M$ and append it to $L$\\
 & & & \textbf{else} append $u+v$ to $M$. \\
 & & \multicolumn{2}{l}{\textbf{EndIf}}\\
 & \multicolumn{3}{l}{\textbf{EndWhile}}
\end{tabular}
\end{algorithm}

We assume in the following that the discrete domain $\Omega_*$ is defined as the intersection $\Omega_* := \Omega\cap \cZ_*$ of the continuous domain $\Omega$ with a grid $\cZ_*$ of the form
\begin{equation}
\label{def:Z}
\cZ_* := h R_\theta (\offset+\Z^2) = \{ h R_\theta (\offset+x); \, x \in \Z^2\}.
\end{equation}
This grid is defined through a scale parameter $h>0$, a rotation $R_\theta$ of angle $\theta \in \R$, and an offset $\offset \in \R^2$. In practical applications, one generally chooses for simplicity $\theta=0$ and $\offset=0$ (this is the case of all illustrations of this paper). 
The complexity of the FM-ASR may however show, for some untypical Finsler metrics, a strong dependence on the parameters $\theta$ and $\offset$. Hence there is a significant difference between the worst case complexity of the FM-ASR, and the  average case complexity over randomized grid orientations $\theta \in [0,2 \pi]$ and offsets $\offset \in [0,1]^2$, see below.

The stencil $V_*(z)$, $z\in \Omega_*$, assembled in the Preprocessing of the FM-ASR
, and involved in \iref{eikonal_disc}, is defined by rotating, rescaling and offsetting the mesh $\cT(\cF_z \circ R_\theta)$: with obvious notations
\be
\label{defVz}
V_*(z) := z+h R_\theta \,\cT(\cF_z \circ R_\theta). 
\ee
These stencils have a fine angular resolution in the direction of anisotropy, and a coarser one in other directions, see Figure \ref{fig:MetricTypes}. This distinctive property justifies the name of our algorithm: Fast Marching using Anisotropic Stencil Refinement (FM-ASR).

\begin{figure}
\includegraphics[width=16cm]{\pathPic/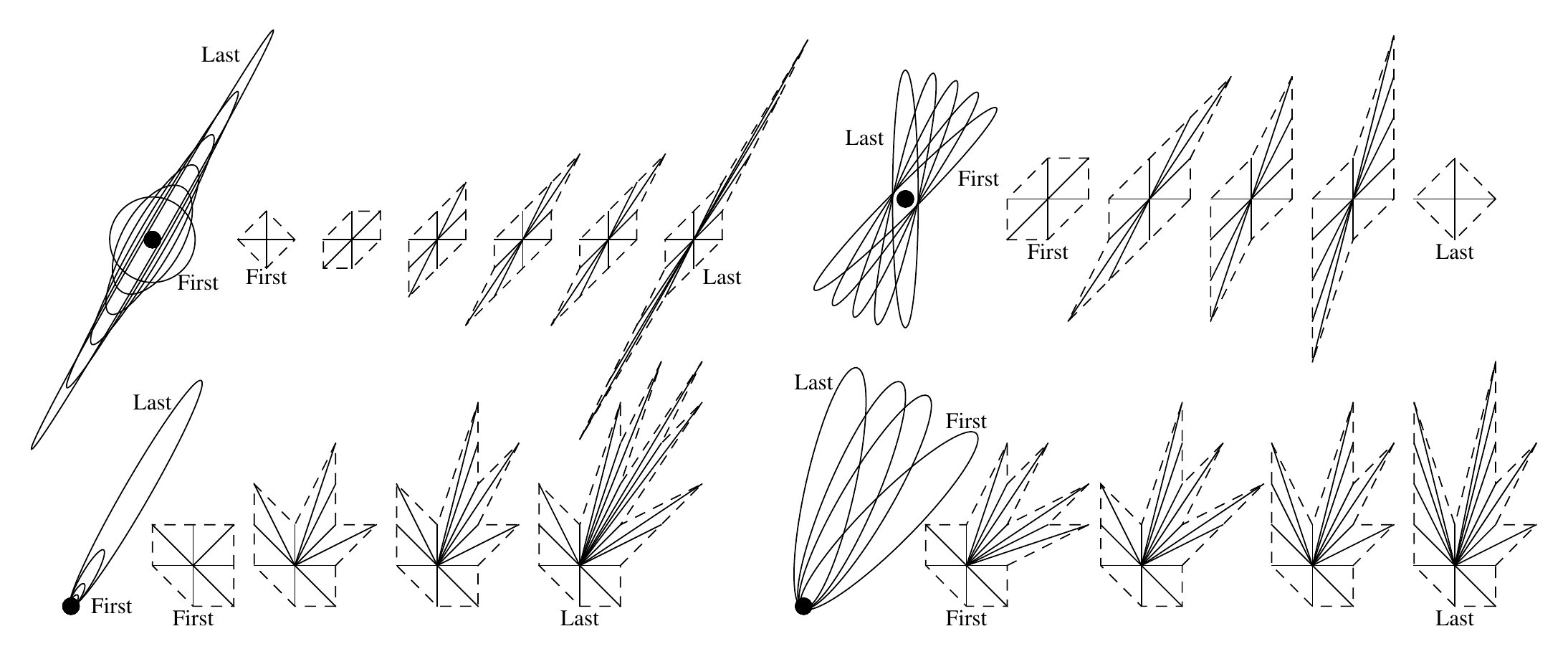}
\caption{
\label{fig:KappaTheta}
Top: the mesh $\cT(F)$ constructed for norms $F$ of anisotropic euclidean type (i.e.\ $F(u):=\sqrt{u^\trans M u}$ for some matrix $M\in S_2^+$), of anisotropy ratio $\kappa(F)$ ranging from $1$ to $32$ and orientation $\pi/3$ (top left), or of anisotropy ratio $\kappa(F)=10$ and orientation ranging from $\pi/4$ to $\pi/2$ (right). Bottom: likewise for asymmetric norms of the form $F(u) := \|u\|-\<\omega,u\>$, $u\in \R^2$, of anisotropy ratio ranging from $4$ to $400$ and orientation $\pi/3$ (bottom left), or of anisotropy ratio $100$ and varying orientations (bottom right).
}
\end{figure}

\begin{algorithm}
\label{algo:prepross}
FM-ASR: Preprocessing.\\
\begin{tabular}{ll}
\textbf{Input:} & A bounded domain $\Omega \subset \R^2$, equipped with a Finsler Metric $\cF \in C^0(\overline \Omega \times \R^2, \R_+)$. \\
& A grid $\cZ_*$, obtained by rotating, rescaling and offsetting (if needed) the grid $\Z^2$.
\end{tabular}
\begin{tabular}{l}
\textbf{Set} $\Omega_* := \Omega \cap \cZ_*$. \\
\textbf{Assemble} the stencils $V_*(z)$, $z\in \Omega_*$, as in \iref{defVz}.\\
\textbf{Assemble} the ``reversed stencils'', defined by $V^*(y):=\{x\in \Omega_*\sm\{y\}; \, y \text{ is a vertex of } V_*(x)\}$.\\
\textbf{Set} $\partial \Omega_* := \{y\in \cZ_* \sm \Omega_*; \, V^*(y)\neq \emptyset\}$.\\
\end{tabular}
\end{algorithm}

The vertices $v$ of the mesh $\cT(F)$, associated to an asymmetric norm $F$, are bounded in terms of the anisotropy ratio: $\|v\| \leq 2 \kappa(F)$ (see Proposition \ref{prop:defMesh} below). Hence the discrete boundary $\partial \Omega_*$, produced by the FM-ASR initialization, may contain grid points at distance $2 h\, \kappa(\cF)$ from the domain $\Omega$.
This is not an issue in the case of the null boundary condition \iref{eikonal}, \iref{eikonal_disc}, chosen in our presentation, or of a point source problem (the most common case in applications, see \S \ref{sec:num}). However, if the boundary condition is non-trivial, then its extension from the boundary $\partial \Omega$ to the wider discrete set $\partial \Omega_*$ is required by the FM-ASR.

The next lemma gives a simple characterization of $F$-acuteness when the asymmetric norm $F$ is differentiable or of anisotropic euclidean type (i.e.\ defined by a symmetric positive definite matrix). The characterization \iref{eq:acuteGrad}, for smooth norms, was introduced in \cite{Vlad08} in the same context.
We denote by $S_2^+$ the collection of $2\times 2$ symmetric positive definite matrices. 
\begin{lemma}
\label{lem:acuteCriterion}
Let $F$ be an asymmetric norm on $\R^2$, and let $u,v\in \R^2 \sm\{0\}$.
\begin{enumerate}
\item
If $F$ is differentiable at $u,v$, then these vectors form an $F$-acute angle if and only if 
\be
\label{eq:acuteGrad}
\<u,\nabla F(v)\> \geq 0 \stext{ and } \<v,\nabla F(u)\> \geq 0.
\ee
\item 
If these exists $M\in S_2^+$ such that $F(w) = \sqrt{\<w,M w\>}$, for all $w\in \R^2$, then $u,v$ form an $F$-acute angle if and only if 
\be
\label{eq:acuteM}
\<u,M v\> \geq 0.
\ee
\end{enumerate}
\end{lemma}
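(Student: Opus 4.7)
The plan is to exploit two features of an asymmetric norm: its convexity (hence the convexity of all one-dimensional restrictions) and, in part~1, its differentiability at the two prescribed points. The key reformulation is that the half-line condition $F(u+\delta v)\geq F(u)$ for all $\delta\geq 0$ is equivalent to monotonicity of a convex one-variable function on $[0,\infty)$, which by convexity is equivalent to non-negativity of the right derivative at the origin.

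For part~1, first I would set $\varphi(\delta):=F(u+\delta v)$ for $\delta\in\R$. Since $F$ is convex and $1$-homogeneous, $\varphi$ is a convex function of $\delta$. Because $F$ is differentiable at $u$, the right derivative satisfies $\varphi'(0^+)=\langle v,\nabla F(u)\rangle$. Convexity of $\varphi$ gives that $\varphi$ is non-decreasing on $[0,\infty)$ if and only if $\varphi'(0^+)\geq 0$, which yields the equivalence
\[
F(u+\delta v)\geq F(u) \ \text{for all } \delta\geq 0 \iff \langle v,\nabla F(u)\rangle\geq 0.
\]
Swapping the roles of $u$ and $v$ produces the second inequality in \eqref{eq:acuteGrad}, and combining the two equivalences gives part~1.

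For part~2, I would simply specialize the previous criterion. When $F(w)=\sqrt{\langle w,Mw\rangle}$ with $M\in S_2^+$, the norm is smooth on $\R^2\sm\{0\}$ and
\[
\nabla F(w)=\frac{Mw}{F(w)}.
\]
Plugging this into \eqref{eq:acuteGrad} yields $\langle v,Mu\rangle/F(u)\geq 0$ and $\langle u,Mv\rangle/F(v)\geq 0$. Both conditions coincide and reduce to \eqref{eq:acuteM}, thanks to the symmetry of $M$ and the positivity of $F(u),F(v)$.

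The only substantive point to be careful about is that one does not need differentiability of $F$ along the entire ray $\{u+\delta v:\delta\geq 0\}$: convexity of $\varphi$ alone guarantees that non-negativity of its right derivative at $0$ implies global monotonicity on $[0,\infty)$, so differentiability of $F$ at $u$ (and symmetrically at $v$) suffices. This is the main — and minor — subtlety; everything else is a direct application of convex analysis and the chain rule.
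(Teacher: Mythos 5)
Your proof is correct and follows essentially the same route as the paper: Point 1 rests on exactly the same two facts (the first-order expansion of $F$ at its points of differentiability, and the fact that a convex one-variable function with non-negative right derivative at $0$ stays above its value at $0$ on $[0,\infty)$), merely packaged as monotonicity of $\delta\mapsto F(u+\delta v)$ rather than as the tangent-line inequality. The only divergence is in Point 2, where you specialize Point 1 via $\nabla F(w)=Mw/F(w)$ while the paper uses the self-contained quadratic expansion $F(u+\delta v)^2=F(u)^2+2\delta\<u,Mv\>+\delta^2F(v)^2$; both are one-line computations and equally valid.
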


\begin{proof}
We first establish Point 1. We have the Taylor development $F(u+\delta v) = F(u)+ \delta \<v, \nabla F(u)\>+ o(\delta)$ as $\delta\to 0$, and likewise exchanging the roles of $u$ and $v$. Thus \iref{eq:acute} clearly implies \iref{eq:acuteGrad}. Conversely the function $F$, being convex, is above its tangent maps, hence $F(u+\delta v) \geq F(u) + \delta \<v, \nabla F(u)\>$ for all $\delta\in \R$, and likewise exchanging the roles of $u$ and $v$. Thus \iref{eq:acuteGrad} implies \iref{eq:acute}, which concludes the proof of Point 1.

Point 2 immediately follows from the following expansion: for any $u,v \in \R^2$, $\delta\in \R$, one has 
\[
F(u+\delta v)^2 = F(u)^2+2\delta \<u,M v\> + \delta^2 F(v)^2.
\qedhere
\]
\end{proof} 

If $F$ is the canonical euclidean norm, then $F$-acuteness coincides with the standard notion of acuteness (apply \iref{eq:acuteM} to $M:=\Id$). The following proposition, or a close variant \cite{T95,SV03}, is at the foundation of all Dijkstra inspired methods. The positivity of the differences $d_w-d_u$, $d_w-d_v$, is a substitute for the positivity of the edge weights in the classical Dijkstra algorithm.


\begin{prop}[Causality Property]
\label{prop:upwind}
Let $F$ be an asymmetric norm on $\R^2$, let $u,v \in \R^2$ be linearly independent, and let $d_u, d_v \in \R$. Assume that $u$ and $v$ form an $F$-acute angle. 
Define 
\be
\label{mint}
d_w := \min_{t \in [0,1]} t d_u+(1-t) d_v + F(t u+(1-t) v),
\ee
and assume that this minimum is not attained for $t\in \{0,1\}$.
Then $d_u<d_w$ and $d_v<d_w$. 
\end{prop}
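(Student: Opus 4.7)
The plan is to work with the convex function $\phi(t) := t d_u + (1-t) d_v + F(tu + (1-t)v)$ on $[0,1]$, whose minimum is attained at some interior $t^* \in (0,1)$ by hypothesis (with $\phi(0), \phi(1) > \phi(t^*)$). Setting $w^* := t^* u + (1-t^*)v$, I would invoke the first-order optimality condition $0 \in \partial \phi(t^*)$ together with the subdifferential calculus for the positively $1$-homogeneous convex function $F$ to produce a subgradient $p^* \in \partial F(w^*)$ satisfying $\<p^*, u-v\> = d_v - d_u$. Such a $p^*$ exists because $\{\<p, u-v\> : p \in \partial F(w^*)\}$ is a convex subset of $\R$, hence an interval, which must contain $d_v - d_u$.

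Next I would exploit Euler's identity $\<p^*, w^*\> = F(w^*)$, valid for any subgradient of a positively $1$-homogeneous convex function, to rewrite
$$F(w^*) = t^*\<p^*,u\> + (1-t^*)\<p^*,v\>.$$
Substituting this into $d_w = t^* d_u + (1-t^*) d_v + F(w^*)$ and eliminating $\<p^*,v\>$ via the optimality identity yields the key reduction
$$d_w - d_u = \<p^*, u\>, \qquad d_w - d_v = \<p^*, v\>,$$
so the proposition becomes a positivity statement about these two inner products.

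For non-negativity, the $F$-acuteness of $(u,v)$ combined with positive $1$-homogeneity of $F$ gives $F(w^*) = t^* F(u + \tfrac{1-t^*}{t^*} v) \geq t^* F(u)$, and symmetrically $F(w^*) \geq (1-t^*) F(v)$. Combining with the generic subgradient bounds $\<p^*, u\> \leq F(u)$ and $\<p^*, v\> \leq F(v)$, the Euler decomposition above forces
$$t^*\<p^*, u\> = F(w^*) - (1-t^*)\<p^*, v\> \geq F(w^*) - (1-t^*) F(v) \geq 0,$$
and likewise $\<p^*, v\> \geq 0$.

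The main obstacle is promoting these to \emph{strict} inequalities, which I would handle by contradiction, using the assumption that the minimum is not attained at an endpoint. If $\<p^*, u\> = 0$, then the chain above must be tight throughout, yielding $\<p^*, v\> = F(v)$ and $F(w^*) = (1-t^*)F(v)$. The optimality identity $\<p^*, u - v\> = d_v - d_u$ then gives $d_u - d_v = F(v)$, so $\phi(0) = d_v + F(v) = d_u = d_w = \phi(t^*)$, contradicting $\phi(0) > \phi(t^*)$. The strict inequality $d_w > d_v$ follows by a symmetric argument using $\phi(1) > \phi(t^*)$ and the other $F$-acuteness inequality.
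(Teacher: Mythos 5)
Your proof is correct and follows essentially the same route as the paper's: first-order optimality at the interior minimizer plus Euler's identity for the $1$-homogeneous $F$ yields the key identities $d_w-d_u=\langle p^*,u\rangle$, $d_w-d_v=\langle p^*,v\rangle$ (the paper phrases this via Kuhn--Tucker for $G(x,y):=F(xu+yv)$ on the simplex, with $V=d_w\un-D$), and then $F$-acuteness combined with non-attainment at the endpoints gives strict positivity. The only cosmetic difference is in the endgame: the paper obtains strictness in a single chain of inequalities, whereas you first establish non-negativity and then upgrade it by a tightness/contradiction argument — both rest on exactly the same two ingredients.
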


\begin{proof}
See appendix.
\end{proof}

In order to describe the Execution of the FM-ASR, see algorithm page \pageref{algo:execution}, we introduce a variant of the Hopf-Lax update \iref{defLambda}, which uses two additional variables: a boolean map $b:\Omega_* \to \{\trial, \accepted\}$, and a grid point $y\in \Omega_*$.
\be
\label{defLambdaby}
\Lambda(\dist, x; \, b,y) := \min_{x' \in \Gamma} \cF(x'-x) + \interp_V \dist(x'),
\ee
where $V:=V_*(x)$, and $\Gamma$ denotes union of the vertex $y$, and the (at most two) segments $[y,z]$ of $\partial V$ containing $y$ and another vertex $z$ of $V$ such that $b(z) = \accepted$. The second part of the FM-ASR, Execution, is common to the original Fast-Marching algorithm \cite{T95} and its variants \cite{AltonMitchell12,M12}. The fact that it solves the discrete fixed point problem \iref{eikonal_disc} follows from the Causality Property, see \cite{T95,M12} for a proof. 

\begin{algorithm}
\label{algo:execution}
FM-ASR: Execution (common to other variants of the Fast Marching algorithm \cite{T95}).\\
Variables: a boolean table $b: \Omega_* \cup \partial \Omega_* \to \{\trial, \accepted\}$, and a map $\dist : \Omega_* \cup \partial \Omega_* \to \overline \R_+$.\\
\begin{tabular}{lll}
\multicolumn{3}{l}{
\textbf{Initialize}  $\dist$ to $+\infty$ on $\Omega_*$, and to $0$ on $\partial \Omega_*$. Initialize $b$ identically to $\trial$.} \\
\multicolumn{3}{l}{
\textbf{While} $b$ is not identically $\accepted$ \textbf{do}}\\
& 
\multicolumn{2}{l}{
Denote by $y \in \Omega_* \cup \partial \Omega_*$ a minimizer of $\dist$ among those points such that $b(y)=\trial$.} \\
& \multicolumn{2}{l}{\textbf{Set} $b(y)::=\accepted$.}\\
& \multicolumn{2}{l}{\textbf{For all} $x\in V^*(y)$ such that $b(y)=\trial$ \textbf{do}}\\
& & \textbf{Set} $\dist(x) ::= \min\{\dist(x),\, \Lambda(\dist,x; \, b,y)\}$.\\
& \multicolumn{2}{l}{\textbf{EndFor}}\\
\multicolumn{3}{l}{\textbf{EndWhile}}\\
\multicolumn{3}{l}{\textbf{Output:} the distance map $\dist$.}
\end{tabular}
\end{algorithm}

For each step size $h>0$, consider the discrete domain $\Omega_h := \Omega \cap (h \Z^2)$, and the associated solution $\dist_h$ of the system \iref{eikonal_disc} produced by the FM-ASR. A proof of uniform convergence of the discrete maps $(\dist_h)_{h>0}$ towards the solution $\distC$ of the continuous Escape Time problem,
$$
\lim_{h \to 0} \left(\max_{z\in \Omega_h} | \distC(z) - \dist_h(z)|\right),
$$
is presented in \cite{M12} for the FM-LBR, a closely related algorithm, in the special case where 
$\Omega = [-1/2,1/2]^2\sm \{(0,0)\}$, and where periodic boundary conditions are applied to the external boundary of $\Omega$
(Equivalently $\Omega = \R^2\sm \Z^2$ and the metric is periodic: $\cF_z = \cF_{z+u}$ for all $u \in \Z^2$).
The adaptation of this proof to the FM-ASR is straightforward\footnote{The proof can in fact be simplified in the case of the FM-ASR,  since the stencil $V_*(z)$ of a grid point $z\in \Omega_*$ contains the four immediate grid neighbors of $z$. This makes Lemmas 2.6 (Consistency) and 2.7 trivial in \cite{M12}. }, and is not reproduced here.\\

The rest of this introduction, and of this paper, is devoted to estimating the complexity of the FM-ASR. Unsurprisingly, this complexity is tied to the cardinality of the FM-ASR stencils, and thus to the cardinality of the $F$-acute meshes $\cT(F)$ used to define them.
The next proposition provides an uniform upper bound on $\#(\cT(F))$, in terms of the anisotropy ratio $\kappa(F)$ of the given asymmetric norm $F$. 
This first, coarse estimate is however not much satisfying: mesh cardinality grows (quasi-)linearly with the anisotropy ratio,
 and our anisotropic construction of $F$-acute meshes $\cT(F)$ has little advantage over an isotropic one $\cT_{\kappa(F)}$, depending only on the anisotropy ratio.

\begin{prop}
\label{prop:WorstCase}
There exists a constant $C$, such that the following holds. For any asymmetric norm $F$ on $\R^2$ one has:
\be
\label{CardAsym}
\#(\cT(F)) \leq C\kappa(F) (1+\ln \kappa(F)).
\ee
A slightly sharper estimate holds if $F$ is \emph{symmetric}:
\be
\label{CardSym}
\#(\cT(F)) \leq C\kappa(F).
\ee
For any $\kappa \geq 1$,  there exists a mesh $\cT_\kappa$ which is $F$-acute for any asymmetric norm such that $\kappa(F)\leq \kappa$, and has cardinality $\#(\cT_\kappa) \leq C \kappa (1+\ln \kappa)$.
There also exists an anisotropic euclidean norm $F_\kappa$ such that $\kappa(F) \leq \kappa$ and  $\#(\cT(F_\kappa)) \geq \kappa/C$. 
\end{prop}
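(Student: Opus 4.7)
My plan is to prove the four claims of Proposition \ref{prop:WorstCase} in order. A common setup underlies the two upper bounds \eqref{CardAsym} and \eqref{CardSym}: since each triangle $T \in \cT(F)$ has area $1/2$ by Definition \ref{def:acuteMesh}(ii), one has $\#(\cT(F)) = 2\,\Area(\bigcup \cT(F))$. The naive combination with the vertex-norm bound $\|v\| \leq 2\kappa(F)$ (stated after Algorithm \ref{algo:TF} and proven separately) only yields $\#(\cT(F)) = O(\kappa(F)^2)$, which is too weak. The key refinement is to count the leaf pairs $(u,v)$ of the Stern--Brocot-like tree produced by Algorithm \ref{algo:TF}: since consecutive leaves satisfy $\det(u,v) = \pm 1$, each such pair spans an angular sector at the origin of measure $\|u\|^{-1}\|v\|^{-1}$ up to lower-order terms. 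Summing bounds $\#(\cT(F))$ by an angular integral of the form $\int_{S^1} d\theta / \Delta_F(\theta)$, where $\Delta_F(\theta)$ is a lower bound on the wedge width at direction $\theta$ compatible with $F$-acuteness, dictated by the curvature and support-cone geometry of the $F$-unit ball.

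For the asymmetric bound \eqref{CardAsym}, I would estimate $\Delta_F(\theta)$ via Lemma \ref{lem:acuteCriterion} (using the gradient criterion for smooth $F$, extended by mollification) together with the two conditions of \eqref{eq:acute}, which generically pin down the angular gap from two independent sides. Bounding the total variation of the support gradients of the $F$-unit ball along $S^1$ in terms of $\kappa(F)$ yields the integral estimate $O(\kappa(F)(1+\ln\kappa(F)))$; the logarithm arises from directions near which $F$ exhibits extreme asymmetric behavior. For the sharper symmetric bound \eqref{CardSym}, the central symmetry of $\cT(F)$ inherited from $F(-u)=F(u)$ halves the effective angular range and, more importantly, forces a balanced Stern--Brocot descent that eliminates the logarithmic factor.

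For the universal mesh (third claim), I would define $\cT_\kappa$ as the mesh produced by Algorithm \ref{algo:TF} applied to an extremal asymmetric norm $F_\kappa^\star$ whose unit ball envelopes those of all asymmetric norms $F$ with $\kappa(F) \leq \kappa$; monotonicity of the $F$-acuteness relation \eqref{eq:acute} under norm comparison then ensures that $\cT_\kappa$ is $F$-acute for every such $F$, and \eqref{CardAsym} applied to $F_\kappa^\star$ gives the cardinality bound. For the lower bound (fourth claim), I would take $F_\kappa(u) = \sqrt{u^\trans M_\kappa u}$ with $M_\kappa = R_\alpha \diag(1,\kappa^2) R_\alpha^\trans$ for a small rotation angle $\alpha = c/\kappa$, so that $\kappa(F_\kappa) = \kappa$. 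Using Lemma \ref{lem:acuteCriterion} Part 2, direct computation gives $\langle (1,0), M_\kappa (n,1)\rangle \approx n - \alpha \kappa^2$, so the Stern--Brocot descent between $(1,0)$ and $(0,1)$ must proceed through pairs $((1,0),(n,1))$ until $n \geq c' \alpha \kappa^2 = \Omega(\kappa)$, producing $\Omega(\kappa)$ subdivisions in that single wedge alone.

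The main obstacle is the sharp angular integration underlying \eqref{CardAsym} and \eqref{CardSym}, especially producing the exact logarithmic gap between the asymmetric and symmetric estimates; rigorously extending the gradient criterion of Lemma \ref{lem:acuteCriterion} to non-differentiable asymmetric norms, and propagating the resulting bounds through the Stern--Brocot descent by a careful approximation argument, will also require attention.
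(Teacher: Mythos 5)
Your construction for the fourth claim (the lower bound) is sound and essentially the paper's: the paper uses the tilted matrix $M_\tau=\bigl(\begin{smallmatrix}1&\tau\\ \tau&2\tau^2\end{smallmatrix}\bigr)$ and checks via Lemma \ref{lem:acuteCriterion}(2) that the pairs $(1,0),(-r,1)$, $0\leq r<\tau$, are not acute, which forces $\Omega(\kappa)$ inner nodes in one branch; your $R_\alpha\diag(1,\kappa^2)R_\alpha^\trans$ with $\alpha\sim 1/\kappa$ does the same thing. The other three claims, however, each contain a genuine gap. The most clear-cut is the third claim: \emph{$F$-acuteness is not monotone under pointwise comparison of norms}. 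Condition \eqref{eq:acute} depends on the shape (derivatives) of the unit ball, not its size, so an ``enveloping'' norm $F_\kappa^\star$ does not inherit or transmit acuteness, and no single norm can work: $\cT(F^\star)$ is refined only near $F^\star$'s own anisotropy directions, whereas a mesh acute for \emph{all} $F$ with $\kappa(F)\leq\kappa$ must be uniformly refined in every direction (consider two anisotropic euclidean norms of ratio $\kappa$ with orthogonal axes). The paper instead takes $\cT_\kappa:=\cT(p_\kappa)$ for the purely arithmetic stopping criterion $p_\kappa$: ``$s(T)\geq\kappa$'', and derives $F$-acuteness from the implication $p_\kappa\Rightarrow p_F$, which rests on the sufficient condition \eqref{kappaFAngle} ($\<u,v\>\geq 0$ and $\kappa(F)\sin|\varangle(u,v)|\leq 1$ imply acuteness) together with $\sin|\varangle(u,v)|=(s(T)^2+1)^{-1/2}$.

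For the two upper bounds your angular-integral scheme is too vague to carry the estimate, and the stated mechanisms for the constants are not the right ones. The $\ln\kappa$ in \eqref{CardAsym} does not come from ``directions of extreme asymmetric behavior'' of $F$; it is a lattice-counting artifact: the paper counts \emph{inner nodes} rather than leaves (using $\#(\cT(p))=4+\#(\cE(p))$), shows that the number of non-acute elementary triangles with shorter vertex $u$ is at most $s_{p_F}/\|u\|^2\leq\kappa/\|u\|^2$, and sums over $0<\|u\|\leq\sqrt{\kappa}$ using $\sum 1/\|u\|^2\leq 8(1+\ln\kappa)$. Your proposed $\Delta_F(\theta)$ ``dictated by the curvature of the $F$-unit ball'' cannot see the lattice, and it is precisely the interaction with $\Z^2$ that governs leaf widths (this is the whole point of Figure \ref{fig:rotate} and Theorem \ref{th:TCard}). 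Likewise, the improvement to $C\kappa$ in the symmetric case is \emph{not} obtained by halving the angular range or by a ``balanced descent'' (halving a range cannot remove a logarithm); in the paper it requires John's theorem to write $F=G\circ A$ with $\kappa(F)\leq\sqrt 2$, hence $|\vp_F|\leq\pi/4$, a sandwich of $\vp_F$ by a fixed finite family of anisotropic euclidean norms (Lemmas \ref{lem:ineqCone}, \ref{lem:angleMatrix} and Proposition \ref{prop:IneqPhiIneqCard}), and the key structural fact (Lemma \ref{lem:eigenvec}) that for an anisotropic euclidean norm every refined triangle contains the small eigenvector in its interior, so the refinement forest has only two nonempty, single-branch trees and $\#(\cT)\leq 6+2\kappa$ with no log. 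None of these ingredients appears in your outline, and without them neither upper bound is established.
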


The following theorem is our main result: it establishes that the cardinality of $\cT(F)$ grows only (poly-)logarithmically with the anisotropy of $F$, in an average sense over all orientations.
The difference between the uniform and the average cardinality bounds, Proposition \ref{prop:WorstCase} and Theorem \ref{th:TCard} respecticely, reflects the fact, illustrated on Figure \ref{fig:rotate}, that the cardinality of $\cT(F)$ strongly depends on the orientation of the anisotropy of $F$.
For each $\theta\in \R$ we define the rotated asymmetric norm $F^\htheta$ by 
$$
F^\htheta(u) := F(R_\theta^\trans u),
$$
where $u\in \R^2$ and $R_\theta$ denotes the rotation matrix of angle $\theta$, see Figure \ref{fig:rotate}. 
\begin{figure}
\includegraphics[width=4cm]{\pathPic/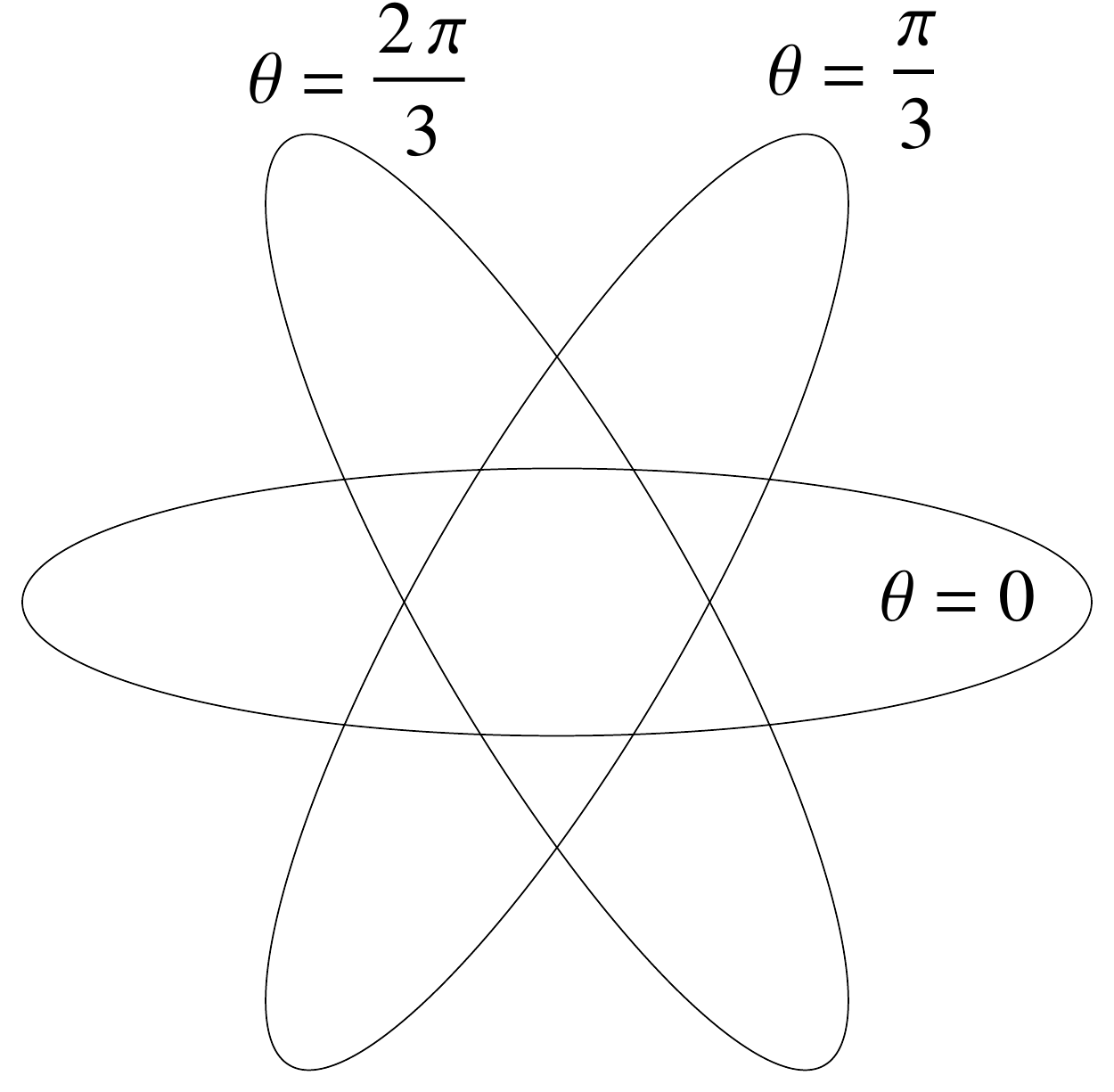}
\includegraphics[width=6cm]{\pathPic/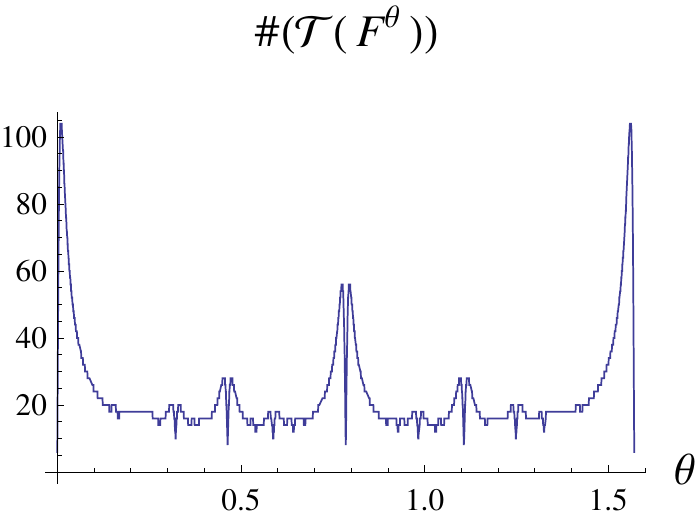}
\includegraphics[width=6cm]{\pathPic/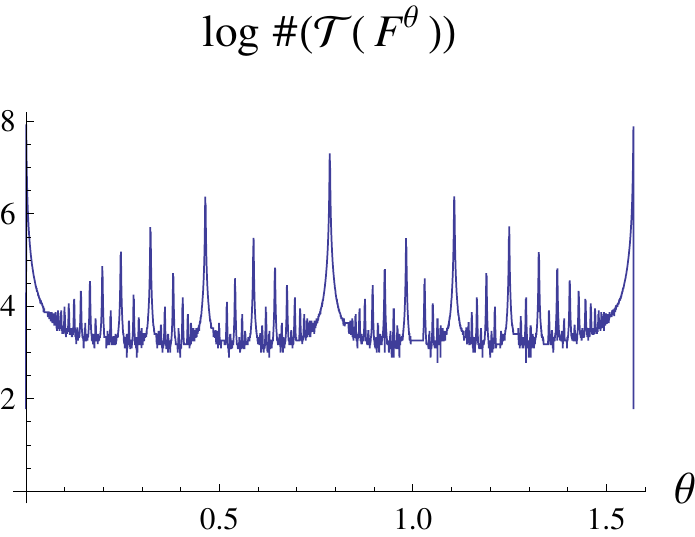}
\caption{
\label{fig:rotate}
The unit ball of $F^\htheta$ is the unit ball of $F$ rotated by the angle $\theta$ (left). Here the norm $F$, of anisotropic euclidean type, is given by the diagonal matrix of entries $(\kappa,1/\kappa)$, with $\kappa=4$ (left), $\kappa=100$ (center, linear plot) and $\kappa = e^8$ (right, log plot). 
In this example, the cardinality of $\cT(F^\htheta)$ is highly dependent on the angle $\theta$, and seems to spike when $(\cos \theta, \sin \theta)$ is close to be proportional to a vector with small integer coordinates.
}
\end{figure}
\begin{theorem}
\label{th:TCard}
There exists a constant $C$, such that for any asymmetric norm $F$ on $\R^2$, one has:
\be
\label{avgCardAsym}
\int_0^{2 \pi} \#(\cT(F^\htheta)) \, d\theta \leq C (1+\ln^3 \kappa(F))
\ee
A slightly sharper estimate holds if $F$ is \emph{symmetric}:
\be
\label{avgCardSym}
\int_0^{2 \pi} \#(\cT(F^\htheta)) \, d\theta \leq C(1+\ln^2 \kappa(F)).
\ee
\end{theorem}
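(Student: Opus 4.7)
The plan is to swap sum and integral via Fubini:
\[
\int_0^{2\pi} \#(\cT(F^\htheta)) \, d\theta \; = \; \sum_v \mu\bigl\{\theta \in [0,2\pi] : v \in \cT(F^\htheta)\bigr\},
\]
where $v$ ranges over primitive lattice vectors of $\Z^2 \sm \{0\}$ and $\mu$ is Lebesgue measure on $\R$. It then suffices, for each such $v$, to bound the measure of orientations producing $v$ as a mesh vertex, and to sum.

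First I would exhibit the combinatorial structure of $\cT(F)$ built by Algorithm~\ref{algo:TF}. A direct inspection shows that every vertex $v$ other than the four initial axis vectors satisfies $v = u_1 + u_2$ with $(u_1, u_2)$ a uniquely determined Stern--Brocot parent pair --- primitive lattice vectors with $\det(u_1, u_2) = \pm 1$ --- and $v$ is produced only when $(u_1, u_2)$ fails to form an $F$-acute angle. A clean upper bound for $\#(\cT(F))$ is therefore the number of non-$F$-acute Stern--Brocot pairs that can possibly be visited; dropping the ``visited'' qualifier weakens the bound but removes the awkward cascading constraint that a pair is visited iff every ancestor along its Stern--Brocot path is non-acute.

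Next I would quantify the non-acuteness measure of each candidate pair. Reducing to the anisotropic euclidean case $F(u) = \sqrt{u^\trans M u}$ via a John-type ellipsoidal approximation (losing only a constant factor in $\kappa(F)$), Lemma~\ref{lem:acuteCriterion}(2) rewrites non-$F^\htheta$-acuteness of $(u_1, u_2)$ as the sign condition
\[
\langle u_1, u_2 \rangle + (\kappa(F)^2 - 1)(u_1 \cdot n_\theta)(u_2 \cdot n_\theta) < 0,
\]
where $n_\theta$ is the long eigenvector of the rotated matrix $M_\theta$. Since $\det(u_1, u_2) = \pm 1$ forces $u_1, u_2$ to subtend a Euclidean angle $\sim 1/(\|u_1\|\|u_2\|)$, a short trigonometric analysis yields two facts: such a pair can be non-$F^\htheta$-acute only if $\|u_1\|\|u_2\| \lesssim \kappa(F)$, and the bad set of $\theta$ is then a union of $\cO(1)$ arcs of total length $\cO(1/(\|u_1\|\|u_2\|))$.

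The final step is a dyadic summation. For each scale $\|u_1\|\|u_2\| \sim 2^k$, the number of admissible Stern--Brocot pairs is $\cO(2^k k)$: for every primitive $u_1$ of norm $\sim r$ the companion $u_2$ lies on an affine sublattice of spacing $r$, and summing $1/r$ over primitive $u_1$ of norm $r \leq 2^{k/2}$ produces the logarithmic factor. Multiplied by the measure $\cO(2^{-k})$ per pair, each scale contributes $\cO(k)$; summing over $k \leq \log_2 \kappa(F)$ yields the announced $\cO(\ln^2 \kappa(F))$ bound in the symmetric case. For the asymmetric case I would decompose an asymmetric norm $F$ into a symmetric part and a linear correction $\langle \omega, \cdot \rangle$: the symmetric part drives the main anisotropy and is handled as above, while the linear correction breaks central symmetry and forces an independent refinement analysis of each half-plane, producing an additional $\ln \kappa(F)$ factor and the desired $\cO(\ln^3 \kappa(F))$ estimate. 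The chief technical obstacle, in my view, is the rigorous John-type reduction from a general (possibly non-smooth and asymmetric) Finsler norm to an anisotropic euclidean model with comparable anisotropy ratio; controlling the error introduced by dropping the ancestral-visit condition is a secondary but still delicate concern.
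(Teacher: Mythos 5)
Your Fubini swap and your treatment of the anisotropic euclidean core are essentially the paper's own argument (equations \eqref{sumElemT}--\eqref{sumIT} and Corollary \ref{corol:ThEucl}): the bad set of orientations for a pair $(u_1,u_2)$ has measure $O(1/(\|u_1\|\|u_2\|))$ because a non-acute elementary triangle must contain the small eigendirection of $M_\theta$ in its interior, and the lattice counting plus (dyadic) summation gives $O(\ln^2\kappa)$. The two reduction steps, however, contain genuine gaps. First, the ``John-type reduction losing only a constant factor in $\kappa(F)$'' does not work as stated: knowing $F_M \leq F \leq \sqrt 2\, F_M$ for an anisotropic euclidean norm $F_M$ gives no control on the acuteness predicate, because $F$-acuteness of a pair is the gradient condition $\<u,\nabla F(v)\>\geq 0$, $\<v,\nabla F(u)\>\geq 0$ (Lemma \ref{lem:acuteCriterion}), and two norms that are pointwise comparable within a factor $\sqrt2$ can have very different gradient fields, hence incomparable sets of non-acute pairs and incomparable bad-orientation sets. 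The paper's use of John's theorem is different: it normalizes $G$ to $F=G\circ A$ with $\kappa(F)\leq\sqrt2$ only to conclude $|\vp_F|\leq\pi/4$, and then sandwiches the gradient-direction function $\vp_F$ between those of a \emph{fixed} family of five eccentric euclidean norms (Lemma \ref{lem:angleMatrix}); the cone lemma \ref{lem:ineqCone} and the monotone comparison of predicates (Proposition \ref{prop:IneqPhiIneqCard}) then give $p_{F_1}\wedge\cdots\wedge p_{F_r}\Rightarrow p_F$ and hence $\#(\cT(G))\leq\sum_i\#(\cT(G_i))$. You need some substitute for this gradient-level comparison; norm-level comparability is not enough.

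Second, your asymmetric step relies on writing $F$ as a symmetric norm plus a linear form $\<\omega,\cdot\>$. This decomposition does not exist for a general asymmetric norm: it describes only the special ``offsetted'' class of Proposition \ref{prop:OffsettedNorms} and its relatives, whereas a generic convex unit ball containing the origin is not obtained from a centrally symmetric one by adding a linear functional to its gauge. Even granting the decomposition, the claim that the linear correction ``forces an independent refinement analysis of each half-plane, producing an additional $\ln \kappa(F)$ factor'' is a heuristic, not an argument. The paper's extra logarithm has a precise source: using the semi-Lipschitz bound \eqref{rightLipschitz} and the integral bound \eqref{intBounded} on $\tan\vp_F$, the covering Lemma \ref{lem:seq} produces $r=O(1+\ln\kappa(F))$ rotated copies of a single very eccentric euclidean norm whose $\vp$-functions dominate $\vp_F$ from above and below (Proposition \ref{prop:AsymFi}), and the $\ln^3$ bound is then $r$ times the $\ln^2$ euclidean estimate. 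Without this, or an equivalent construction, your asymmetric case is unsupported.
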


We next use Proposition \ref{prop:WorstCase} and Theorem \ref{th:TCard} to obtain worst case and average case complexity estimates for the FM-ASR. For that purpose, we fix the bounded smooth domain $\Omega \subset \R^2$, and the scale parameter $h>0$.
For each angle $\theta \in \R$, and offset $\offset \in \R^2$, we introduce the grid 
$$
\cZ_{\theta, \offset} := h R_\theta^\trans (\offset + \Z^2).
$$
In the rest of this introduction, the subscript $*$, used above to denote discrete entities, is replaced with the grid parameters $(\theta,u)$.
The discrete domain is thus denoted by $\Omega_{\theta,u} := \Omega \cap \cZ_{\theta, u}$, the discrete boundary by $\partial \Omega_{\theta,u}$, and the stencils by $V_{\theta,u}(z)$, $z\in \Omega_{\theta,u}$.
Like other Dijkstra-inspired solvers of the Escape Time problem, see Remark \ref{rem:Complexity}, the complexity of the FM-ASR is given by 
\begin{equation}
\label{DijkstraComplexity}
\cO(N\ln N + N'_{\theta,u}),
\end{equation}
where $N$ denotes the total number of discrete points, and $N'_{\theta,u}$ the sum of the stencil cardinalities.
\begin{equation*}
N := \#(\Omega_{\theta,u} \cup \partial \Omega_{\theta,u}), \qquad 
N'_{\theta,u} := \sum_{z\in \Omega_{\theta,u}} \#(V_{\theta,u}(z)) = \sum_{z \in \Omega_{\theta,u}} \#(\cT(\cF_z^\htheta)).
\end{equation*}
The discrete domain cardinality $N$ is mostly independent of the grid orientation parameters $\theta, u$ (this is why we write $N$ and not $N_{\theta,u}$): if the scale parameter $h$ is sufficiently small, then 
\begin{equation}
\label{NArea}
N \simeq |\Omega| h^{-2}.
\end{equation}
Proposition \ref{prop:WorstCase} implies a worst case upper bound for $N'_{\theta,u}$:
\begin{equation}
\label{NtuEstim}
N'_{\theta,u} \leq \#(\Omega_{\theta,u}) \,\max_{z \in \Omega} \#(\cT(\cF_z^\theta)) \leq N \kappa(\cF)(1+ \ln \kappa(\cF)).
\end{equation}
Let $N'$ be the average value of $N'_{\theta,u}$, over the collection of grid orientation parameters $(\theta, u)\in [0,2\pi] \times [0,1]^2$. This average value is, as expected, much smaller than from the above uniform upper bound:
\begin{align*}
2 \pi N' &:= \int_0^{2\pi} \int_{[0,1[^2} N'_{\theta,u}\, du \, d \theta \\
&= \int_0^{2\pi} \int_{[0,1[^2} \sum_{z \in \Omega_{\theta,u}} \#(\cT(\cF_z^\htheta)) \, du \, d \theta \\
&= h^{-2}\int_0^{2 \pi} \int_{z \in \Omega} \#(\cT(\cF_z^\theta)) \, dz\, d \theta \\
&\leq C h^{-2} \int_{z \in \Omega} (1+\ln^3 \kappa(\cF_z)) dz,\\
&\leq C \, |\Omega| h^{-2}\,  (1+\ln^3 \kappa(\cF)).
\end{align*}
Thus, using \eqref{NArea} 
\begin{equation}
\label{NpEstim}
N' \lesssim C N (1+\ln^3 \kappa(\cF)). 
\end{equation}
Combining \eqref{DijkstraComplexity} with \eqref{NtuEstim} and \eqref{NpEstim}, we obtain that the worst case complexity of the FM-ASR is $\cO(N \kappa(\cF) \ln \kappa(\cF) + N \ln N)$, while the average case complexity\footnote{%
In the case of a symmetric Finsler metric, the worst case and average case complexities drop respectively to  $\cO(N \kappa(\cF) + N \ln N)$ and $\cO(N \ln^2 \kappa(\cF) + N \ln N)$.%
},
over randomized grid orientation parameters $(\theta,u)$, is $\cO(N \ln^3 \kappa(\cF) + N \ln N)$. The worst case complexity corresponds to untypical cases where e.g.\ the Finsler metric has a preferred anisotropy direction over a large portion of the domain, and the discretization grid is almost aligned with this direction; the average case complexity is more likely to reflect application performance.

The FM-ASR average complexity $\cO(N \ln^3 \kappa(\cF) + N \ln N)$ is significantly below that of Bellman-Ford inspired algorithms, such as the AGSI \cite{BR06} of complexity $\cO(\lambda(\cF) N^{3/2})$, thanks to the quasi-linear complexity in $N$. Alternative Dijkstra inspired solvers include the Ordered Upwind Method (OUM) \cite{SV03} (which uses dynamic stencils, constructed on the fly during the execution), and the Monotone Acceptance OUM (MAOUM) \cite{AltonMitchell12}. They use stencils larger than those of the FM-ASR, of cardinality between $\kappa(\cF)$ and $\kappa(\cF)^2$, which results in a complexity linear if not polynomial in the anisotropy ratio: $\cO(\kappa(\cF)^\beta N \ln N)$ \cite{SV03,AltonMitchell12}, for some\footnote{
Strictly speaking, $\beta=2$. Yet the \emph{asymptotic} complexity of the OUM, as $N \to \infty$, drops to $\cO(\kappa(\cF) N \ln N)$. 
If the MAOUM is executed on a periodic mesh, then the stencil of $z$ only depends on a single parameter: the anisotropy ratio $\kappa(\cF_z)$. It costs $\cO(\kappa(\cF_z)^2)$ to construct, but the MAOUM execution only involves its boundary, which contains $\cO(\kappa(\cF_z))$ elements. In our numerical experiments \S \ref{sec:num} these stencils are precomputed, stored in a look-up table, and the complexity of the MAOUM drops to $\cO(\kappa(\cF) N \ln N)$. 
} 
$\beta \in [1,2]$. In defense of the AGSI, OUM and MAOUM, let us mention that these alternative algorithms are not limited to grid discretizations, contrary to the FM-ASR, see Remark \ref{rem:Specialization}.
Fast Marching using Lattice Basis Reduction (FM-LBR), introduced in \cite{M12} by the author, has like the FM-ASR a complexity $\cO(N \ln \kappa(\cF)+N \ln N)$ logarithmic in the metric anisotropy and quasi linear in the number of unknowns. Yet the application range of the FM-LBR is different: it extends to dimension 3, and 4 \cite{M12b}, but only applies to metrics of Riemannian type. A numerical comparison of the FM-ASR with the AGSI, the MAOUM, the FM-8 (a fast but not always convergent alternative) and the FM-LBR (when applicable), is presented in \S\ref{sec:num}.
\\


We discuss in \S\ref{sec:construction} the construction of the $F$-acute mesh $\cT(F)$, for any asymmetric norm $F$. Section \S3 is devoted to the proof of our main result Theorem \ref{th:TCard}, achieved in Corollaries \ref{corol:CardSym} and \ref{corol:CardAsym}. The proof of the worst case analysis, Proposition \ref{prop:WorstCase}, is achieved in Corollaries \ref{corol:WorstCase} and \ref{corol:CardSym}. We present some numerical results in \S \ref{sec:num}. 


\begin{remark}[Performance comes at the price of specialization]
\label{rem:Specialization}
The FM-ASR, introduced in this paper, is an efficient method to solve strongly anisotropic and/or asymmetric Escape Time problems when the discrete domain $\Omega_*$ is a subset of $\Z^2$, or of another orthogonal grid.
Extending this algorithm to a broader class of discrete domains requires to generalize the construction of the stencils $V(z)$,  $z\in \Omega_*$. In particular one must find an analog of the rule ``if $u,v\in \Z^2$ do not form an $\cF_z$-acute angle, then consider their sum $u+v \in \Z^2$'' which appears implicitly in the construction of the mesh $\cT(\cF_z)$, algorithm page \pageref{algo:TF}, and thus of $V(z)$ \iref{defVz}. This is non-trivial.
\begin{itemize}
\item \textbf{If the discrete domain $\Omega_*$, two dimensional, is not a grid subset}. The points $u,v$ do not belong to a lattice, but are differences $u=x-z$, $v=y-z$, between the point $z$ where the stencil is constructed, and close-by discrete points $x,y\in \Omega_*$. 
The new inserted stencil vertex $z'$ cannot be obtained as the sum $z+u+v = x+y-z$, which may not belong to $\Omega_*$. Instead, $z'$ should be chosen as the point closest to $z$ in the open cone $z+\R_+^* u+\R_+^*v$, where $\R_+^*$ denotes positive reals. However, it is not clear wether data structures exist, for the discrete domain $\Omega_*$, which allow to perform this closest point search without strongly increasing the complexity of the FM-ASR.
\item \textbf{If the domain is three dimensional, and $\Omega_*$ is a subset of $\Z^3$}. There are now three points $u,v,w\in \Z^3$, vertices of a \emph{facet} of the stencil boundary $\partial V$. 
The extension of the Causality Property, Proposition \ref{prop:upwind}, to the case of an arbitrary asymmetric norm $F: \R^3 \to \R_+$, requires not only the pairs of vertices $(u,v)$, $(u,w)$, $(v,w)$ to form $F$-acute angles, but also all the pairs of a vertex and a point of the opposite edge: $(u,\, t v+(1-t)w)$, $t\in [0,1]$, and likewise exchanging the roles of $u,v,w$.
This can be be difficult to check numerically. In the case of a Riemannian metric, checking $F$-acuteness for pairs of vertices is sufficient \cite{SV03,M12}, but there remains an ambiguity: should the new inserted vertex be $u+v$ or $u+w$, if none of the corresponding angles is $F$-acute? Our attempts to generalize the FM-ASR to this setting were unconvincing, both experimentally and theoretically, hence we recommend the FM-LBR \cite{M12} for such 3d, Riemannian, Escape Time problems.
\end{itemize}
\end{remark}

\begin{remark}[Detailled complexity analysis of the FM-ASR]
\label{rem:Complexity}
Preprocessing step, page \pageref{algo:prepross}. We omit the complexity of the construction of the discrete domain $\Omega_*$
as the intersection of the continuous domain with a grid, 
since this is either trivial or dependent on the chosen representation of $\Omega$. 
Consider an asymmetric norm $F$ such that answering the predicate ``$u,v$ form an $F$-acute angle'' has cost $\cO(1)$, for any $u,v\in \R^2$. That is the case is $F$ is differentiable, and if evaluating the gradient $\nabla F (u)$ has cost $\cO(1)$, using Lemma \ref{lem:acuteCriterion}. Then constructing the $F$-acute mesh $\cT(F)$ has cost $\cO(\#(\cT(F)))$, where $\#(\cT(F))$ denotes the number of triangles in the triangulation $\cT(F)$, which is also the number of its boundary vertices. 
As a result, assembling the stencils of the FM-ASR has cost $\cO(N')$, where $N' = N'(\Omega_*, \cF)$ is the sum of the stencil cardinalities
$$
N' := \sum_{z\in \Omega_*} \#(V_*(z)).
$$
Assembling the reversed stencils $V^*(z)$, $z\in \Omega_*$, is done by reversing a directed graph having $N'$ edges, and thus also has cost $\cO(N')$. Note that $N'$ is also the sum of the cardinalities of the reversed stencils. Storing these stencils leads to a $\cO(N')$ memory footprint for the FM-ASR, which is not required by e.g.\ the AGSI \cite{BR06}, see Remark 2.5 in \cite{M12} for a discussion of this point. The total complexity of the FM-ASR Preprocessing is thus $\cO(N')$. 

Execution step, page \pageref{algo:execution}. Let $N := \#(\Omega_* \cup \partial \Omega_*)$ be the cardinality of the discrete domain, which is also the number of unknowns in \iref{eikonal_disc}. The execution requires to maintain a list of the points in $\Omega_* \cup \partial \Omega_*$ such that $b(z)=\trial$, sorted by increasing values of $\dist$. We assume in this complexity analysis that the data structure used for this purpose is a Fibonacci Heap, in such way that the ``Remove\_Key'' and the ``Decrease\_Key'' operations on this list have respective amortized complexity $\cO(\ln N)$ and $\cO(1)$. 
The ``Remove\_Key'' routine is called $N$ times, once a each command $b(y)::=\accepted$, and the ``Decrease\_Key'' routine at most $N'$ times\footnote{
Fibonacci Heaps are a data structure specifically tailored for Dijkstra-Like algorithms on densely connected graphs: $N' \gg N$. In the numerical experiments presented on \S\ref{sec:num}, one always have $N' \leq 20 N$ for the FM-ASR, and using a classical binary heap proved to be more efficient. We used Boost's implementation of Fibonacci heaps, and the Standard Template Library  for binary heaps.
}, once at each command $\dist(x) ::= \min\{\dist(x),\, \Lambda(\dist,x; \, b,y)\}$. 
Evaluating the modified Hopf-Lax update operator $\Lambda(\dist,x; \, b,y)$ requires to solve at most two convex minimization problems of the form \iref{mint}: one for each boundary edge $[y,z]$ of $V_*(x)$ containing $y$ and a vertex $z$ such that $b(z)=\accepted$ \iref{defLambdaby}. The complexity of their resolution is regarded as elementary; in many interesting cases, they have an explicit solution involving $\cO(1)$ elementary operations ($+,-,\times,/$ and $\sqrt\cdot$) among reals, see Proposition \ref{prop:OffsettedNorms}. Like other Dijkstra inspired algorithms, the total cost of the FM-ASR execution is thus $\cO(N'+N\ln N)$.
\end{remark}

\section{Construction of the stencils}
\label{sec:construction}

We discuss in this section the construction of the $F$-reduced mesh $\cT(F)$, defined for each asymmetric norm $F$, and used to define the stencils of the FM-ASR \iref{defVz}. 
The construction presented in the introduction is reformulated as an in-order transversal of four binary trees. 
We establish a worst case upper bound on $\#(\cT(F))$ in Corollary \ref{corol:WorstCase}, and we introduce a number of tools that will be used in \S\ref{sec:cardTF} to estimate the average cardinality of $\cT(F^\htheta)$, $\theta\in [0,2\pi]$. 

\subsection{Mesh generation by recursive refinement}


All the triangles considered in the rest of this paper share some properties of geometric nature (or arithmetic nature, depending on the point of view), which are introduced in the next definition.
\begin{definition}
An elementary triangle $T$, is a triangle satisfying the following properties:
\begin{itemize}
\item One of the vertices of $T$ is the origin $(0,0)$, and the the other two belong to $\Z^2$.
\item Denoting by $u,v$ the non-zero vertices of $T$, one has 
\be
\label{DetScal}
|\det(u,v)|=1
\ \text{ and } \ 
s(T):=\<u,v\> \geq 0.
\ee
\end{itemize}
\end{definition}

The second point of this definition can be rephrased as a geometrical statement:  
$T$ has area $1/2$, and has an acute angle at the origin. 
Let us recall that for any two vectors $u,v\in \R^2$ one has the identity
\be
\label{ScalDetNorm}
\<u,v\>^2+\det(u,v)^2 = \|u\|^2 \|v\|^2.
\ee
If $u,v$ are non-zero, and if $u$ and $-v$ are not positively collinear, we denote by $\varangle(u,v) \in (-\pi,\pi)$ their oriented angle:
\be
\label{def:angle}
\cos(\varangle(u,v)) = \frac{\<u,v\>}{\|u\| \|v\|} \stext{ and } \sin(\varangle(u,v)) = \frac{\det(u,v)}{\|u\| \|v\|}.
\ee
The scalar product $s(T)$ associated to an elementary triangle $T$ reflects its thinness, indeed if $u,v$ are its non-zero vertices then combining \iref{DetScal}, \iref{ScalDetNorm} and \iref{def:angle} we obtain
\be
\label{sinScal}
\sin |\varangle(u,v)| = \frac 1 {\|u\| \|v\|} = (s(T)^2+1)^{-\frac 1 2}.
\ee

\begin{figure}
\begin{center}
\includegraphics[width=4cm]{\pathPic/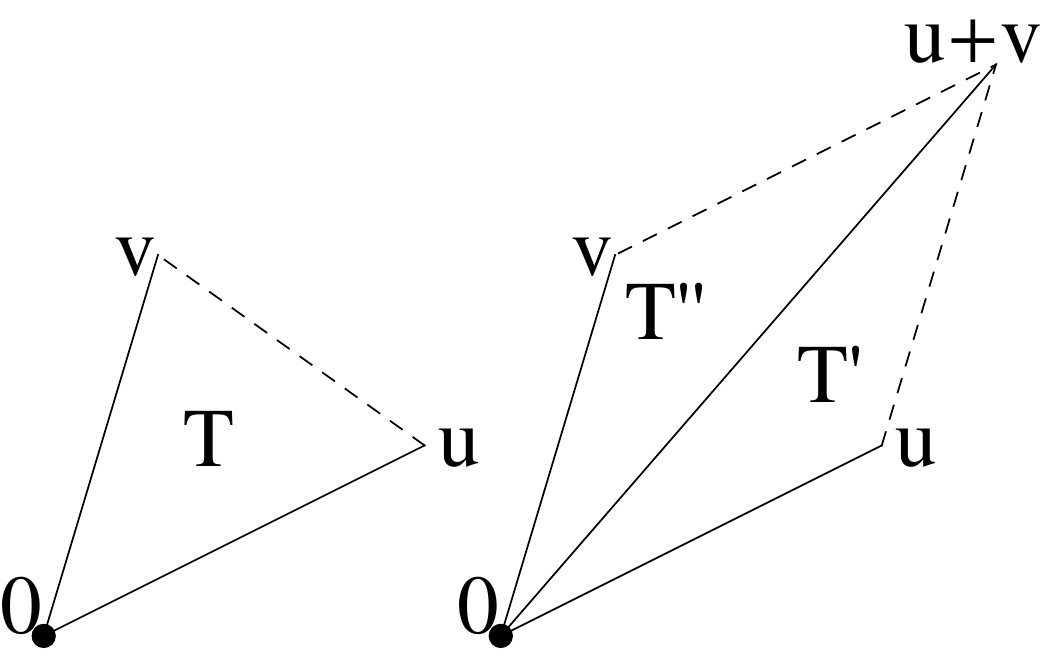}
\hspace{0.2cm}
\includegraphics[width=3cm]{\pathPic/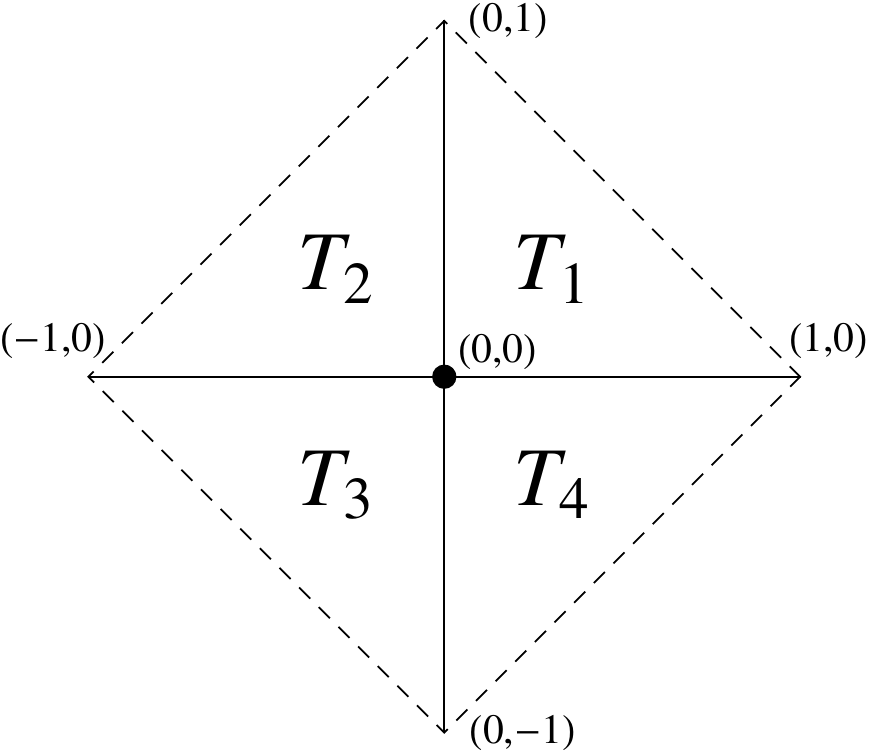}
\hspace{0.2cm}
\includegraphics[width=3cm]{\pathPic/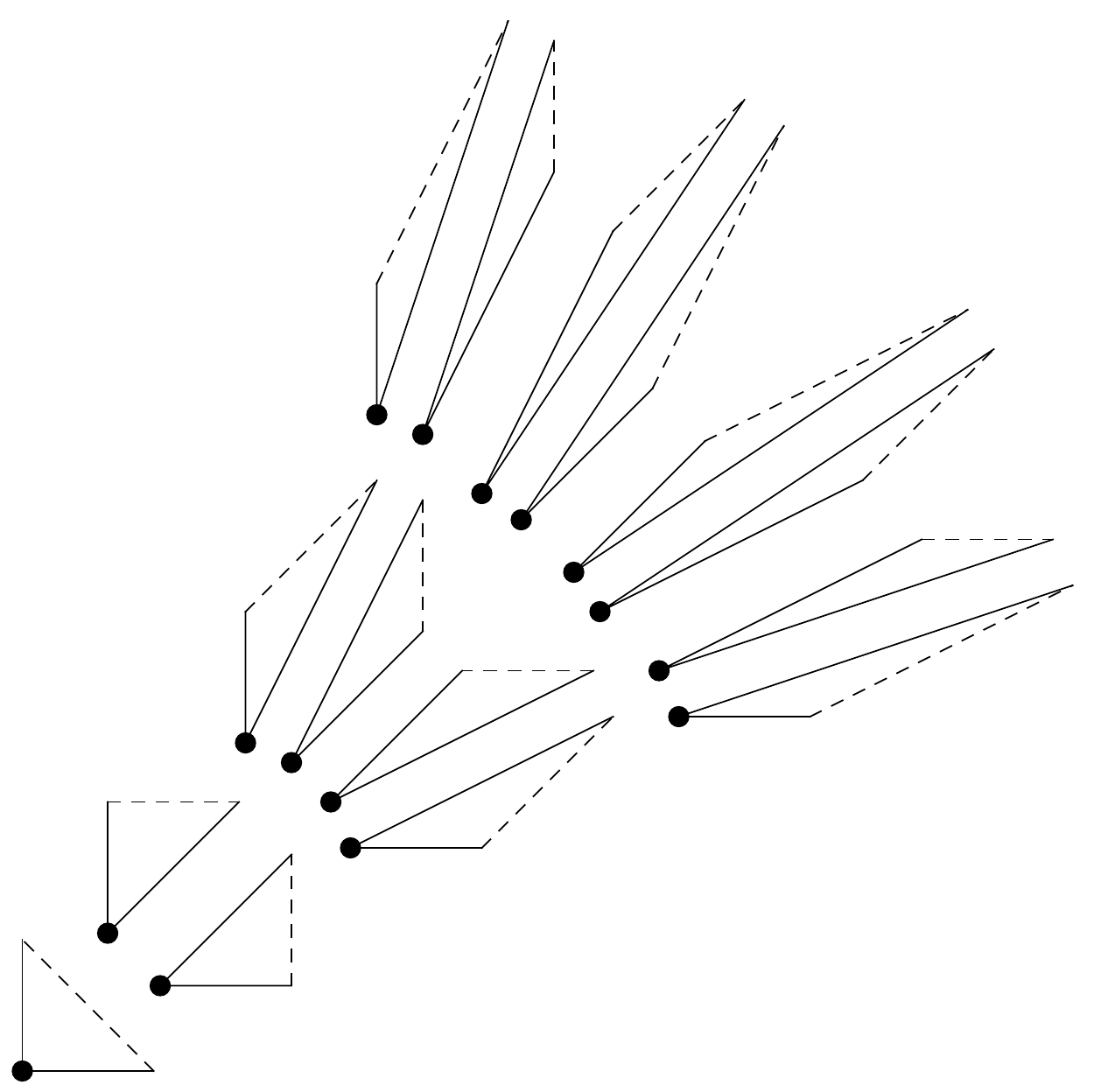}
\hspace{0.2cm}
\includegraphics[width=3cm]{\pathPic/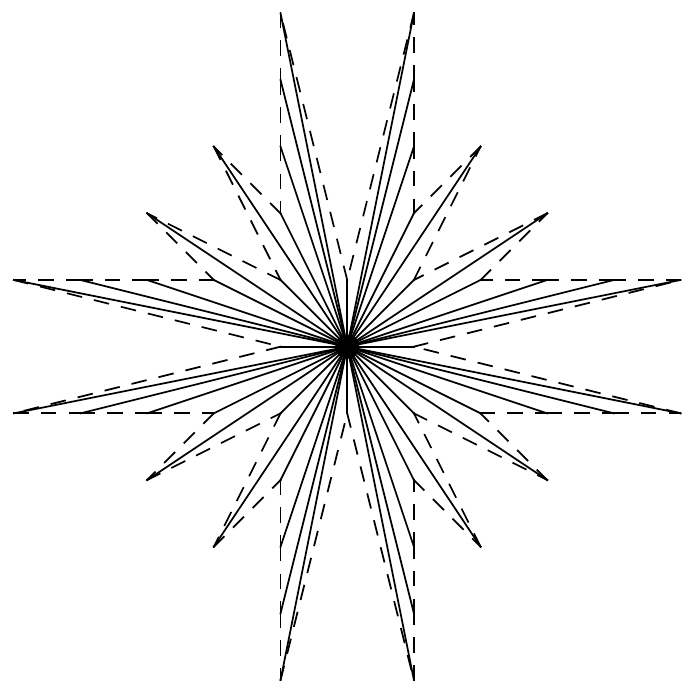}
\end{center}
\caption{
\label{fig:refinement}
Refinement of a triangle (left). Mesh $\cT_0$ (center left). First levels of the binary tree (center right) defined by the recursive refinements of $T_1$. Mesh defined by the ASC: ``$s(T)\geq 5$'' (right).}
\end{figure}

We introduce in the next definition the refinement of an elementary triangle $T$, which is illustrated on Figure \ref{fig:refinement} (left). 
Note that $T$ is  strictly covered by the union of its children.
\begin{definition}
\label{def:refinement}
The \emph{refinement} of an elementary triangle $T$ of non-zero vertices $u,v$ consists of the two elementary triangles $T'$ and $T''$ of non-zero vertices $(u, u+v)$, and $(u+v, v)$, respectively, which are referred to as its children. 
\end{definition}

The scalar product $s(\cdot)$ grows with refinement:
\be
\label{sGrows}
s(T') = \<u,u+v\> = S(T) + \|u\|^2 \geq S(T)+1.
\ee
This property, combined with \iref{sinScal} reflects the fact that the recursive children of an elementary triangle become thinner an thinner, as can be observed on Figure \ref{fig:refinement} (center right).

We denote by $\cT_0$ the mesh, illustrated on Figure \ref{fig:refinement}, containing the four elementary triangles of non-zero vertices $(\pm 1,0)$ and $(0,\pm 1)$. The next lemma establishes that any elementary triangle can be generated by recursive bisections from an element of $\cT_0$.

\begin{lemma}
\label{lem:parents}
\begin{enumerate}
\item
Let $T$ be an elementary triangle, of non-zero vertices $u$ and $v$. 
The following are equivalent : (i) $T \in \cT_0$, (ii) $\|u\|=\|v\|$, (iii) $\<u,v\> < \min \{\|u\|^2,\|v\|^2\}$.

\item

The collection of elementary triangles, equipped with parent-children relationship, is a forest of four infinite binary trees, which roots are the elements of $\cT_0$.
%
\end{enumerate}
\end{lemma}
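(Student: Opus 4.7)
The plan is to cycle through (i)$\Rightarrow$(ii)$\Rightarrow$(iii)$\Rightarrow$(i) for part 1, then leverage that characterization to describe the parent-child structure for part 2. For (i)$\Rightarrow$(ii), I simply observe that the non-zero vertices of the elements of $\cT_0$ have unit Euclidean norm. For (ii)$\Rightarrow$(iii), Cauchy-Schwarz gives $\<u,v\> \leq \|u\|\|v\|$, with equality only for collinear vectors; since $|\det(u,v)|=1$ excludes collinearity, $\<u,v\> < \|u\|\|v\| = \|u\|^2 = \min\{\|u\|^2,\|v\|^2\}$.

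The main step is (iii)$\Rightarrow$(i). Assuming WLOG $\|u\|\leq\|v\|$, the hypothesis reduces to $\<u,v\> < \|u\|^2$. Combining this with the identity $\<u,v\>^2+\det(u,v)^2=\|u\|^2\|v\|^2$ from \iref{ScalDetNorm} and $|\det(u,v)|=1$ yields $\|u\|^2(\|v\|^2-\|u\|^2) < 1$. Since $\|u\|^2$ and $\|v\|^2-\|u\|^2$ are non-negative integers and $\|u\|^2\geq 1$, this forces $\|v\|^2=\|u\|^2$. Substituting back, the relation $(\|u\|^2-\<u,v\>)(\|u\|^2+\<u,v\>)=1$, whose factors are non-negative integers (using $\<u,v\>\geq 0$ together with the strict inequality $\<u,v\> < \|u\|^2$), forces each factor to equal $1$. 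Hence $\|u\|^2=1$ and $\<u,v\>=0$, so $u,v$ are orthogonal unit vectors of $\Z^2$ and $T\in\cT_0$.

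For part 2, I first characterize parents: an elementary triangle $T$ of non-zero vertices $u,v$ is the refinement-child of a triangle with non-zero vertices either $(u, v-u)$ or $(u-v, v)$, and these candidates are themselves valid elementary triangles iff $\<u,v\>\geq\|u\|^2$ or $\<u,v\>\geq\|v\|^2$ respectively (the determinant condition is automatic, and the non-negative scalar product conditions reduce to $\<u,v-u\>\geq 0$ and $\<u-v,v\>\geq 0$). By part 1, $T$ has no parent precisely when $T\in\cT_0$; for $T\notin\cT_0$, the strict Cauchy-Schwarz inequality $\<u,v\><\|u\|\|v\|$ rules out the simultaneous occurrence of $\<u,v\>\geq\|u\|^2$ and $\<u,v\>\geq\|v\|^2$, so the parent is unique. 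Termination of the parent chain follows from \iref{sGrows}: the non-negative integer $s(T)=\<u,v\>$ strictly decreases at each step back to a parent. Combined with the fact that each elementary triangle has exactly two children by Definition \ref{def:refinement}, this produces a forest of four infinite binary trees rooted at $\cT_0$. To see these four trees are disjoint, I argue by induction on refinement depth that the descendants of $T\in\cT_0$ have their non-zero vertices in the closed convex cone generated by those of $T$; since the four corresponding cones have pairwise disjoint interiors, and any two elementary triangles with the same non-zero vertices coincide, no elementary triangle can belong to two trees. The main obstacle is the number-theoretic step in (iii)$\Rightarrow$(i), which crucially exploits $|\det(u,v)|=1$ and the integrality of $\|u\|^2$, $\|v\|^2$, $\<u,v\>$ to rigidly force $T\in\cT_0$.
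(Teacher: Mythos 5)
Your proof is correct and follows essentially the same route as the paper: the same integrality argument on the identity $\<u,v\>^2+\det(u,v)^2=\|u\|^2\|v\|^2$ for (iii)$\Rightarrow$(i) (you factor $\|u\|^4-\<u,v\>^2=1$ where the paper invokes consecutive perfect squares, but this is cosmetic), and the same identification and exclusion of the two parent candidates $(u,v-u)$ and $(u-v,v)$ together with the decrease of $s(T)$ for Point 2. Your closing cone argument for disjointness of the four trees is harmless but redundant, since uniqueness of parents plus termination of the ancestor chain already assigns each elementary triangle to a unique root.
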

\begin{proof}
Point 1. We clearly have $(i) \Ra (ii)$, by inspection of the four elements of $\cT_0$, and $(ii) \Ra (iii)$, by observing that $u$ and $v$ are not collinear and thus that $\<u,v\> < \|u\| \|v\|$. We next assume $(iii)$ and establish $(i)$. 
We have
$$
\<u,v\>^2 <
 \min \{\|u\|^2, \|v\|^2\}^2 \leq \|u\|^2 \|v\|^2 = \<u,v\>^2+ \det(u,v)^2 =  \<u,v\>^2+1.
$$
Comparing the left and right and side, and observing that the members of these inequalities are all integers, we obtain that the non-strict inequality above is an equality. Hence $\|u\| = \|v\|$ and $(\|u\|^2)^2 = \<u,v\>^2+1$.
Therefore $\<u,v\>^2$ and $(\|u\|^2)^2$ are consecutive integers which are both perfect squares. Only the integers $0$ and $1$ satisfy this property, hence $1=\|u\|=\|v\|$, which implies that these vectors are of the form $(\pm 1,0)$ or $(0, \pm 1)$. Since $|\det(u,v)|=1$, these vectors are not collinear, and we obtain that $T \in \cT_0$, which concludes the proof of the first point of this lemma.

Point 2. 
It follows from \iref{sGrows} that a triangle $T \in \cT_0$ cannot have a parent $R$, since it would satisfy $s(R) < s(T) = 0$. More generally, and for the same reason, an elementary triangle $T$ has at most $s(T)$ ancestors.


In order to conclude the proof, we need to show that any elementary triangle $T$, which is not in $\cT_0$, has exactly one parent $R$. Let $u,v$ be the non-zero vertices of $T$, ordered in such way that $\|u\|\leq\|v\|$. The non-zero vertices of a parent $R$ are either $(u-v,v)$ or $(u,v-u)$, but the first case can be excluded since $\<u-v,v\> = \<u,v\> - \|v\|^2 < \|u\| \|v\| - \|v\|^2 \leq 0$. Conversely, the triangle $R$ which vertices are the origin, $u$ and $v-u$, is an elementary triangle since $\det(u,v-u) = \det(u,v) = \pm 1$, and $\<u,v-u\> = \<u,v\> - \|u\|^2 \geq 0$.
\end{proof}

If a mesh $\cT$ contains only elementary triangles, then it automatically satisfies assumption (ii) of Definition \ref{def:acuteMesh}, and so does any mesh $\cT'$ obtained by refining, possibly recursively, some elements of $\cT$. If the mesh $\cT$ satisfies assumption (i) of Definition \ref{def:acuteMesh}, namely that the union of its elements is a neighborhood of the origin, then so does $\cT'$. The mesh constructions proposed in this paper consist in recursively refining the elements of the mesh $\cT_0$, defined above and fixed in the rest of this paper, until all of them satisfy a prescribed \emph{stopping criterion}, see Figures \ref{fig:ref1} and \ref{fig:ref2}.

\begin{definition}
\label{def:Predicate}
An \emph{Admissible Stopping Criterion} (ASC) is a predicate $p$ which associates to each elementary triangle $T$ a boolean value $p(T)$, and which satisfies the following properties: 

\begin{itemize}
\item (Heredity)
Let $T',T''$ be the children of an elementary triangle $T$. If $p(T)$ holds, then $p(T')$ and $p(T'')$ also hold.
\item (Finiteness)
There exists a constant $s_p\geq 0$ such that $p(T)$ holds for any elementary triangle $T$ satisfying $s(T) \geq s_p$.
\end{itemize}
\end{definition}


The conjunction $p\wedge p'$ and the disjunction $p \vee p'$ of two ASCs $p,p'$ are clearly also ASCs. We write $p\Ra p'$ if $p(T) \Ra p'(T)$ for any elementary triangle $T$.

\begin{definition}
\label{def:TE}
Let $p$ be an ASC. We denote by $\cT(p)$ the collection of triangles obtained by recursively refining (i.e.\ replacing with their children) the elements of $\cT_0$, until each satisfies the predicate $p$.
We denote by $\cE(p)$ the collection of all elementary triangles which do not satisfy $p$.
\end{definition}

Definition \ref{def:Predicate} of an ASC $p$ is tailored so that the set $\cE(p)$ can be identified with four finite binary trees, which are subtrees of the four infinite binary trees of elementary triangles introduced in Point 2 of Lemma \ref{lem:parents}.  
The triangulation $\cT(p)$ consists of the outer leaves of these trees: $\cT(p)\cap \cE(p) = \emptyset$, but each triangle $T \in \cT(p)$ is the child of an element $R \in \cE(p)$. 
This is the main ingredient in the proof of following proposition.


\begin{prop}
\label{prop:pCard}
\begin{enumerate}
\item
The recursive procedure described in Definition \ref{def:TE} 
ends after a finite number of steps, and yields a mesh $\cT(p)$ which satisfies assumptions (i) and (ii) of Definition \ref{def:acuteMesh}. Furthermore 
\be
\label{upperTP}
\#(\cT(p)) = 4+\#(\cE(p)).
\ee
\item
If two ASCs $p,p'$ are such that $p\Ra p'$, then $\#(\cT(p)) \geq \#(\cT(p'))$.
\item 
For any two ASCs $p,p'$, one has $\#(\cT(p \wedge p')) \leq \#(\cT(p)) + \#(\cT(p'))$.
\end{enumerate}
\end{prop}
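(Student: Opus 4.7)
The strategy is to interpret the refinement procedure as building a forest of four binary trees rooted at $\cT_0$, in which the internal nodes are exactly the elements of $\cE(p)$ and the leaves are exactly the elements of $\cT(p)$. Once this structural picture is in place, all three parts reduce to elementary counting together with set-theoretic facts about $\cE(\cdot)$.

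First I would verify termination together with assumptions (i), (ii) of Definition \ref{def:acuteMesh}. Assumption (ii) is preserved under refinement since it is built into the definition of an elementary triangle (the two children of an elementary triangle are again elementary). Assumption (i) follows because the union of $\cT_0$ is already a neighborhood of the origin, and refinement only enlarges the union of the triangulation. For termination, the Finiteness property of Definition \ref{def:Predicate} provides a threshold $s_p$ such that $p(T)$ holds whenever $s(T)\geq s_p$; combining $s(T) < s_p$ on $\cE(p)$ with $|\det(u,v)|=1$ and the lower bound $\|u\|,\|v\|\geq 1$ for nonzero lattice vectors, the identity $\|u\|^2\|v\|^2 = s(T)^2+1$ from \iref{DetScal} and \iref{ScalDetNorm} bounds both $\|u\|$ and $\|v\|$ by $\sqrt{s_p^2+1}$. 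Hence $\cE(p)$ is finite and the recursion ends.

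The cardinality identity \iref{upperTP} is the heart of Point 1. The key input is the contrapositive of the Heredity property: if $p$ fails at a child, then it fails at its parent. So $\cE(p)$ is ancestor-closed in the forest of elementary triangles given by Lemma \ref{lem:parents}. During the refinement procedure, each triangle in $\cE(p)$ is subdivided and thus becomes an internal binary node with exactly two offspring, while each triangle in $\cT(p)$ terminates the recursion as a leaf. Applying the standard identity ``leaves $=$ internal nodes $+$ number of roots'' to this binary forest with four roots yields $\#(\cT(p)) = \#(\cE(p))+4$.

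Points 2 and 3 are then immediate consequences. If $p\Ra p'$, contraposition gives $\cE(p')\subseteq\cE(p)$, whence $\#(\cT(p'))\leq\#(\cT(p))$. For the conjunction, ``$p\wedge p'$ fails'' means ``$p$ fails or $p'$ fails'', so $\cE(p\wedge p') = \cE(p)\cup\cE(p')$, and \iref{upperTP} gives $\#(\cT(p\wedge p'))\leq \#(\cT(p))+\#(\cT(p'))-4$, which is sharper than claimed. The only step requiring care is the bookkeeping in the edge case where some root of $\cT_0$ already satisfies $p$ and so never enters $\cE(p)$; the ``$+4$'' in \iref{upperTP} is precisely what keeps track of such roots as leaves of the refinement forest, and I expect this to be the only subtle point of the argument.
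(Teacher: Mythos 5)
Your proof is correct and follows essentially the same route as the paper: identify $\cE(p)$ with the internal nodes of four finite binary trees (ancestor-closed by the contrapositive of Heredity) and $\cT(p)$ with their leaves, then read off $\#(\cT(p))=\#(\cE(p))+4$ from the leaves-versus-internal-nodes count, with Points 2 and 3 following from $\cE(p)\supseteq\cE(p')$ and $\cE(p\wedge p')=\cE(p)\cup\cE(p')$. The only minor variation is the termination argument, where you bound the vertex norms of triangles in $\cE(p)$ directly via $\|u\|^2\|v\|^2=s(T)^2+1$, whereas the paper invokes the growth $s(T')\geq s(T)+1$ under refinement; both are valid.
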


\begin{proof}
We denote by $(T_i)_{1 \leq i \leq 4}$ the four elements of the mesh $\cT_0$, and by $(\cP_i)_{1 \leq i \leq 4}$ the four infinite binary trees introduced in Point 2 of Lemma \ref{lem:parents}, see also Figure \ref{fig:refinement}.
The root of $\cP_i$ is the triangle $T_i$, for any $1 \leq i \leq 4$, and the children  of any $T\in \cP_i$ are those obtained by refining $T$.
For any ASC $p$ and any $1 \leq i \leq 4$ we denote 
\be
\label{def:cPi}
\cP_i(p) := \{T \in \cP_i; \, p(T) \text{ does not hold}\}.
\ee
The first point of Definition \ref{def:Predicate} implies that $\cP_i(p)$ is a (possibly empty) subtree of $\cP_i$: any $T' \in \cP_i(T)$ is either the root $T_i$, or the child of another $T\in \cP_i$.
The second point of the same definition, combined with \iref{sGrows}, implies that $\cP_i(p)$ is finite. 

The finiteness of the trees $\cP_i(p)$, implies that the refinement procedure ends after a finite number of steps. 
As already observed right after Definition \ref{def:refinement}, the collection $\cT(p)$ of triangles obtained at the end of this procedure, which is also the set of outer leaves of the finite binary trees $(\cP_i(p))_{1 \leq i \leq 4}$, is automatically a mesh satisfying Points (i) and (ii) of Definition \ref{def:acuteMesh}. 

As observed in Point 2 of Lemma \ref{lem:parents}, the collection of all elementary triangles is the disjoint union of the trees $(\cP_i)_{1 \leq i \leq 4}$. Thus the subtrees $\cP_i(p)$ form a partition of the set $\cE(p)$.
Recalling that the number of leaves of a binary tree is one plus the number of its inner nodes, we obtain
\be
\label{cardTPi}
\#(\cT(p)) = \sum_{1\leq i \leq 4} \left(1+\#(\cP_i(p))\right) = 4+ \#(\cE(p)),
\ee
which concludes the proof of the first point.

The implication $p\Ra p'$ of two ASCs is equivalent to the reverse implication of the negations: $\neg p \Leftarrow \neg p'$, and thus to the inclusion $\cE(p) \supset \cE(p')$. If $p\Ra p'$ we thus obtain $\#(\cE(p)) \geq \#(\cE(p'))$, and therefore $\#(\cT(p)) \geq \#(\cT(p'))$, which establishes Point 2.

For any two ASCs $p,p'$, we have $\cE(p\wedge p') = \cE(p) \cup \cE(p')$. Hence $\#(\cE(p\wedge p')) \leq \#(\cE(p))+\#(\cE(p'))$, and therefore $\#(\cT(p \wedge p')) \leq \#(\cT(p)) + \#(\cT(p'))-4$, which concludes the proof of this proposition.
\end{proof}

We establish in the following lemma a first non-trivial estimate of the mesh cardinality $\#(\cT(p))$, in terms of the constant $s_p$ associated to the ASC $p$.

\begin{lemma}
\label{lemma:angleCard}
\begin{itemize}
\item
For any $s \in [1, \infty[$ one has 
\be
\label{sumLog}
\sum_{\substack{u \in \Z^2\\ 0 < \|u\| \leq s}} \frac 1 {\|u\|^2} \leq 8(1+ \ln s). 
\ee

\item
For each $u \in \Z^2$ denote
$$
\cE^+_u := \{v \in \Z^2; \, \|u\| < \|v\|, \, 0 \leq \<u,v\> < s_p, \ \det (u,v) = 1\}, 
$$
and define $\cE^-_u$ likewise, to the exception of the last constraint which is replaced with $\det(u,v) = -1$.
Then $\#(\cE^\ve_u) \leq s_p/\|u\|^2$, for $\ve \in \{+ , -\}$. 

\item
There exists a constant $C$ such that for any ASC $p$, with associated constant $s_p \geq 1$, one has 
\be
\label{cardTUpper}
\#(\cT(p)) \leq C s_p (1+\ln s_p).
\ee
\end{itemize}
\end{lemma}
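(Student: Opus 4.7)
The lemma has three parts that I would prove in order. Parts (1) and (2) are self-contained lattice-theoretic estimates; part (3) is the substantive bound on $\#(\cT(p))$, and will follow directly from (1), (2), and Proposition \ref{prop:pCard}.

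For (1), I would use the standard bound $\#\{u\in\Z^2 \sep \|u\|\leq R\}\leq \pi R^2+O(R)$, either through dyadic decomposition (the slice $2^{j-1}\leq \|u\|<2^j$ contributes at most a constant, and there are $O(\ln s)$ nonempty slices) or by Abel summation on $A_m := \#\{u\in \Z^2 \sep 0<\|u\|^2\leq m\}\leq C m$. Extracting the explicit constant $8$ is a matter of bookkeeping and is not essential in what follows.

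For (2), observe that $\det(u,v)=\pm 1$ forces $\gcd(u_1,u_2)=1$, so $u$ is primitive. Pick any $v_0\in \Z^2$ with $\det(u,v_0)=\ve 1$ (which exists by B\'ezout). For any other $v \in \Z^2$ with $\det(u,v)=\ve 1$, we have $\det(u,v-v_0)=0$, and primitivity of $u$ forces $v-v_0\in \Z u$, i.e.\ $v=v_0+k u$ for some $k\in\Z$. Then
\begin{equation*}
\<u,v\>=\<u,v_0\>+k\|u\|^2,
\end{equation*}
so the constraint $0\leq\<u,v\><s_p$ restricts $k$ to a half-open real interval of length $s_p/\|u\|^2$. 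The additional constraint $\|u\|<\|v\|$ eliminates at most the single borderline case ($\|v\|=\|u\|$ requires $\<u,v\>^2=\|u\|^4-1$, which has at most one admissible integer solution in $\<u,v\>$), trimming the raw count $\lfloor s_p/\|u\|^2\rfloor+1$ to $\leq s_p/\|u\|^2$.

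For (3), Proposition \ref{prop:pCard}(1) gives $\#(\cT(p))=4+\#(\cE(p))$, so it is enough to bound $\#(\cE(p))$. Each $T\in\cE(p)$ is an elementary triangle with non-zero vertices $u,v$ satisfying $|\det(u,v)|=1$ and $0\leq\<u,v\><s_p$. Identity \iref{ScalDetNorm} yields $\|u\|^2\|v\|^2=\<u,v\>^2+1\leq s_p^2$, so the shorter of the two vertices (WLOG $\|u\|\leq\|v\|$) obeys $\|u\|\leq\sqrt{s_p}$. By Lemma \ref{lem:parents}(1), the case $\|u\|=\|v\|$ occurs only for the four triangles of $\cT_0$; all remaining triangles have the larger vertex $v$ lying in $\cE^+_u\cup \cE^-_u$ according to the sign of $\det(u,v)$. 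Therefore
\begin{equation*}
\#(\cE(p))\leq 4+\sum_{0<\|u\|\leq\sqrt{s_p}}\left(\#(\cE^+_u)+\#(\cE^-_u)\right)\leq 4+2 s_p\sum_{0<\|u\|\leq\sqrt{s_p}}\frac{1}{\|u\|^2}\leq C s_p(1+\ln s_p),
\end{equation*}
combining (2) and (1). The main obstacle I anticipate is the careful handling of constants in (2): one must verify that the $\|u\|<\|v\|$ restriction exactly absorbs the off-by-one in the integer count on a real interval of length $s_p/\|u\|^2$. The rest of the argument is structural and flows directly from Proposition \ref{prop:pCard}.
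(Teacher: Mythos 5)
Your overall architecture matches the paper's: part (3) reduces to $\#(\cT(p))=4+\#(\cE(p))$ via Proposition \ref{prop:pCard}, splits $\cE(p)\sm\cT_0$ over the sets $\cE^\ve_u$ with $\|u\|$ small, and combines (1) and (2); your part (1) uses a standard dyadic/Abel count instead of the paper's sup-norm shells, which is equally valid since the explicit constant $8$ is immaterial downstream.

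The genuine problem is the final counting step of your part (2). After parametrizing $v=v_0+ku$ you have at most $\lfloor s_p/\|u\|^2\rfloor+1$ integers $k$ in the half-open window, and to reach the bound $s_p/\|u\|^2$ you must show that \emph{at least} one of them is removed by the constraint $\|u\|<\|v\|$; an argument that this constraint ``eliminates at most'' one candidate points in the wrong logical direction. Moreover the candidate you single out ($\|v\|=\|u\|$, i.e.\ $\<u,v\>^2=\|u\|^4-1$) typically does not exist: by the argument in Point 1 of Lemma \ref{lem:parents}, $\|u\|^4-1$ is a perfect square only when $\|u\|=1$, so for $\|u\|\geq 2$ your borderline case is vacuous while the raw count can still exceed $s_p/\|u\|^2$. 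The candidate that is actually discarded is the unique $v$ on the line with \emph{minimal non-negative} scalar product: it satisfies $0\leq \<u,v\> < \|u\|^2$, hence $\|u\|^2\|v\|^2=\<u,v\>^2+1<\|u\|^4+1$ and therefore $\|v\|\leq\|u\|$, violating $\|u\|<\|v\|$. Equivalently — and this is how the paper argues, invoking Point 1 of Lemma \ref{lem:parents} — every $v\in\cE^\ve_u$ satisfies $\<u,v\>\geq\|u\|^2$; writing the elements of $\cE^\ve_u$ as $v_{\min}+\lambda u$ with $\lambda\in\Z$, $\lambda\geq 0$, and $v_{\min}$ the element of smallest scalar product, one gets $(1+\lambda)\|u\|^2\leq\<u,v_{\min}\>+\lambda\|u\|^2<s_p$, whence $\lambda<s_p/\|u\|^2-1$ and $\#(\cE^\ve_u)\leq s_p/\|u\|^2$. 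With this replacement your proof goes through; note that the same inequality $\<u,v\>\geq\|u\|^2$ also shows $\cE^\ve_u=\emptyset$ for $\|u\|>\sqrt{s_p}$, which is cleaner than your bound $\|u\|^2\|v\|^2\leq s_p^2$ (the latter has an integrality slack coming from $\<u,v\><s_p$, though this only affects constants).
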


\begin{proof}
We first establish \iref{sumLog}, and for that purpose we introduce the sup-norm $\|\cdot\|_\infty$ on $\R^2$ defined by $\|(x,y)\|_\infty := \max\{|x|,|y|\}$. Clearly $\|u\|_\infty \leq \|u\|$ for all $u\in \R^2$. For each $k \in \Z_+$ there exists precisely $(2k+1)^2$ elements $u \in \Z^2$ such that $\|u\|_\infty \leq k$. Hence for each integer $k \geq 1$ there exists precisely $(2k+1)^2-(2k-1)^2 = 8 k$ elements $u \in \Z^2$ such that $\|u\|_\infty = k$. Therefore
$$
\sum_{\substack{u \in \Z^2\\ 0 < \|u\| \leq s}} \frac 1 {\|u\|^2} \leq \sum_{\substack{u \in \Z^2\\ 0 < \|u\|_\infty \leq s}} \frac 1 {\|u\|_\infty^2} =\sum_{0<k\leq s} \frac{8 k}{k^2} \leq 8(1+ \ln s),
$$
which concludes the proof of \iref{sumLog}.

We next turn to the proof of the second point, and for that purpose we consider a fixed $u \in \Z^2$ such that $\cE^+_u$
is non-empty.
Let $v \in \cE^+_u$ be such that the scalar product $\<u,v\>$ is minimal. For any $v'\in E^+_u$ one has $\det (u,v'-v) = 1-1 = 0$, hence $v' = v+ \lambda u$ for some $\lambda \in \R$. Since $u,v,v' \in \Z^2$, and since $u$ has coprime coordinates (recall that $\det(u,v) = 1$), the scalar $\lambda$ must be an integer.
We thus have 
$$
\<u,v'\> = \<u,v\> + \lambda \|u\|^2 < s_p.
$$ 
Since $\<u,v\> \leq \<u,v'>$ we have $\lambda\geq 0$. Since $\<u,v\> \geq \|u\|^2$, using Point 1 of Lemma \ref{lem:parents}, we have $(1+\lambda) \|u\|^2 < s_p$. 
Hence $0 \leq \lambda < s_p/\|u\|^2-1$, and therefore $\#(\cE^+_u) \leq s_p/\|u\|^2$. Estimating  $\cE^-_u$ likewise, we conclude the proof of the second point.

Identifying an elementary triangle to its pair $(u,v)$ of non-zero vertices, ordered by increasing norm, we obtain 
\begin{equation}
\cE(p)\sm \cT_0 \subset \bigcup_{\substack{u \in \Z^2\\ \ve\in \{-,+\}}} \cE^\ve_u.
\end{equation}
Furthermore the set $\cE^\ve_u$ is empty for $\|u\|>\sqrt{s_p}$, since using the second point of the proposition we find that its cardinal is strictly less than one. Hence 
$$
\#(\cE(p)) \leq \#(\cT_0) + \sum_{u \in \Z^2} \left(\#(\cE_u^+)+ \#(\cE_u^-)\right) \leq 4+ \sum_{0<\|u\| \leq \sqrt{s_p}} \frac {2 s_p} {\|u\|^2} \leq 4+16 s_p (1+\ln s_p).
$$
Recalling that $\#(\cT(p)) = 4+\#(\cE(p))$, we conclude the proof of this proposition.
\end{proof}

\subsection{Mesh associated to an asymmetric norm}

We reformulate and study in this subsection, in Proposition \ref{prop:defMesh}, the algorithmic construction of the $F$-acute mesh $\cT(F)$ given in the introduction for each asymmetric norm $F$.

Our first lemma introduces a tool that will be frequently used in the rest of this paper: the approximation of an arbitrary asymmetric norm by smooth ones. 
For each $\theta \in \R$ we denote 
\be
\label{def:eTheta}
e_\theta := (\cos\theta,\sin \theta).
\ee
\begin{lemma}
\label{lem:cvNorm}
For any asymmetric norm $F$ on $\R^2$, there exists a sequence $(F_n)_{n \geq 1}$ of asymmetric norms such that
\begin{equation*}
F_n \to F \text{ locally uniformly on } \R^2, \text{ as } n \to \infty, 
\end{equation*}
and for all $n \geq 1$:
\begin{itemize}
\item
$F_n \in C^\infty(\R^2\sm\{0\})$.
\item
$\kappa(F_n) \leq \kappa(F)$.
\item If $F$ is symmetric, then so is $F_n$.
\end{itemize}
\end{lemma}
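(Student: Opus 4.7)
The plan is to construct $F_n$ by \emph{rotational} mollification of $F$, an averaging procedure tailor-made to preserve both $1$-homogeneity and convexity. Fix a nonnegative even $\psi \in C^\infty_c(\R)$ with $\int \psi = 1$ and support in $[-1,1]$, set $\psi_n(\theta) := n\psi(n\theta)$, and for $n$ large enough define
\begin{equation*}
F_n(u) := \int_{\R} F(R_\theta u)\, \psi_n(\theta)\, d\theta.
\end{equation*}

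I would then verify the four required properties in turn. Since each map $u \mapsto F(R_\theta u)$ is convex, $1$-homogeneous and positive on $\R^2 \setminus \{0\}$, and $\psi_n \geq 0$ integrates to one, the same three properties are inherited by $F_n$, so $F_n$ is an asymmetric norm. For the anisotropy bound, if $\|u\|=1$ then $\|R_\theta u\|=1$, so $\min_{\|v\|=1} F(v) \leq F(R_\theta u) \leq \max_{\|v\|=1} F(v)$; averaging against $\psi_n$ preserves these bounds and yields $\kappa(F_n) \leq \kappa(F)$. Symmetry is immediate: if $F(-u) = F(u)$, pulling the sign inside the integrand gives $F_n(-u) = F_n(u)$.

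For smoothness, the key observation is that in polar coordinates $u = \rho\, e_\phi$ (with $\rho > 0$), one has $F_n(u) = \rho\, g_n(\phi)$ where $g_n(\phi) := \int F(e_{\phi+\theta}) \psi_n(\theta)\, d\theta$ is the convolution on the circle $\R/2\pi\Z$ of the continuous function $\phi \mapsto F(e_\phi)$ with the smooth mollifier $\psi_n$, hence $C^\infty$. Since $\|u\|$ and any local smooth branch of the angle $\phi(u)$ are smooth on $\R^2\setminus\{0\}$, so is $F_n$. For local uniform convergence, $F$ is uniformly continuous on the compact unit circle, so $F(R_\theta u) \to F(u)$ uniformly in $\{\|u\|=1\}$ as $\theta \to 0$; averaging against $\psi_n$, whose support shrinks to $\{0\}$, gives uniform convergence on $S^1$, which extends to any bounded set by $1$-homogeneity.

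The construction is quite direct; the main point to get right is the \emph{rotational}, rather than translational, convolution. An ordinary mollification $F * \phi_n$ would destroy $1$-homogeneity, while regularizations involving an additive Euclidean term would inflate the anisotropy ratio. Rotational averaging is essentially the unique device that simultaneously respects convexity, $1$-homogeneity, the anisotropy bound, and optional symmetry.
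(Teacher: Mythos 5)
Your construction is exactly the paper's: writing $F_n(u)=\int F(R_\theta u)\psi_n(\theta)\,d\theta$ in polar coordinates gives $F_n(re_\vp)=r\int F(e_{\vp-s})\psi_n(s)\,ds$, which is the angular convolution the paper uses (with a Gaussian mollifier instead of your compactly supported one), and your verification of the four properties is correct. Same approach, just spelled out in more detail than the paper's ``the four announced properties are immediate.''
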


\begin{proof}
We define the asymmetric norm $F_n$ through polar coordinates and by convolution: for each $r\geq 0$ and each $\vp\in \R$, 
$$
F_n(r e_\vp) := r \int_\sR F(e_\theta) \mu_n(\vp-\theta) d \theta.
$$
We denoted by $\mu_n$ the mollifier $\mu_n(\theta) := n \mu(n\theta)$, for all $n \geq 1$, where $\mu(\theta) := e^{-\theta^2}/\sqrt{\pi}$, for all $\theta \in \R$.
The four announced properties are immediate.
\end{proof}

\begin{figure}
\begin{center}
\includegraphics[width=15cm]{\pathPic/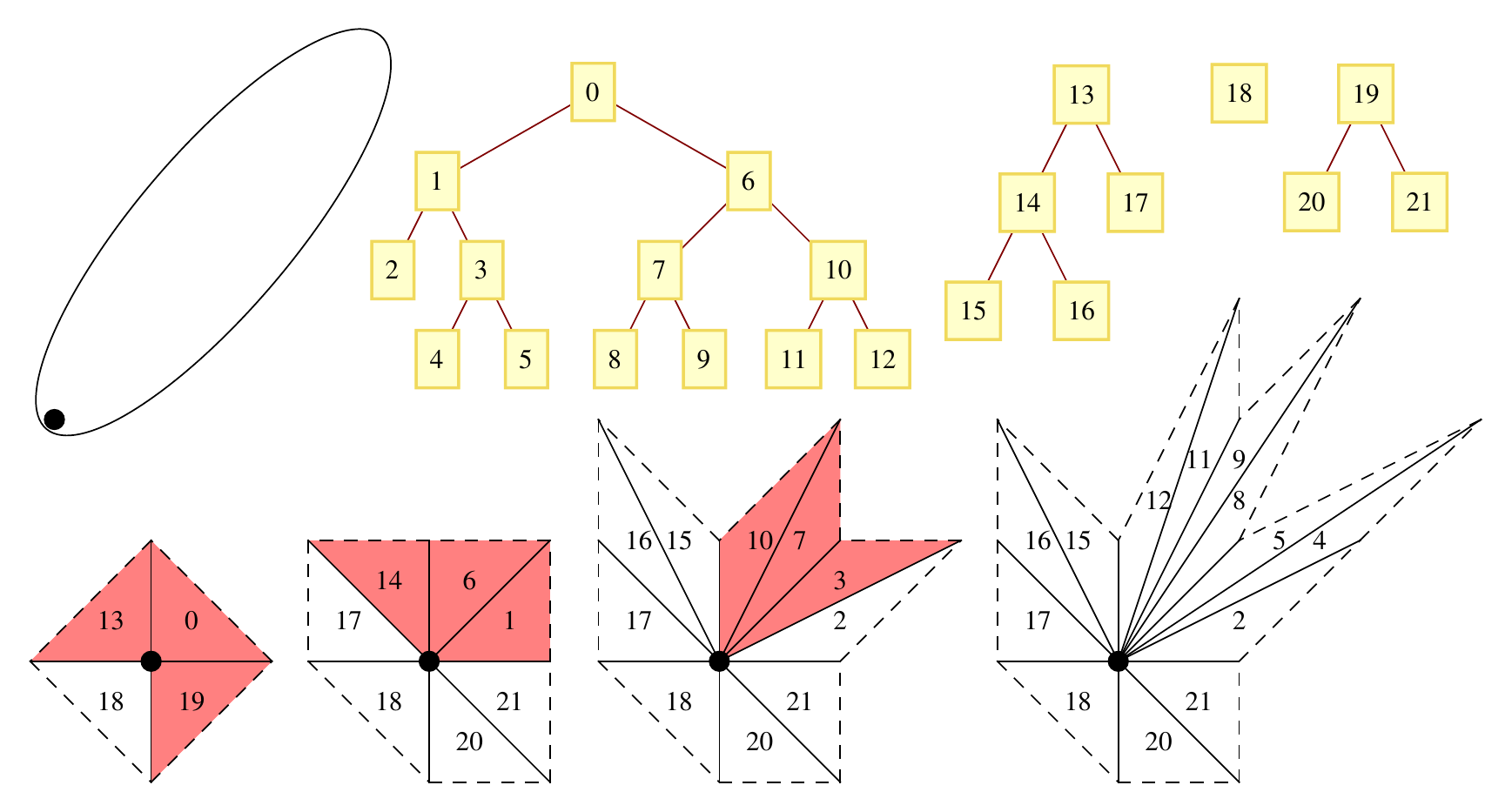}
\end{center}
\caption{
\label{fig:ref1}
Unit ball of an asymmetric norm $F$ of anisotropy ratio $\kappa(F)=20$ (top left). Generation of $\cT(p_F)$ by recursive bisection (bottom, left to right). The non-zero vertices of colored triangles do not form an $F$-acute angle, hence these triangles are refined. Colored triangles constitute the set $\cE(p_F)$, see definition \ref{def:TE}; they form the inner nodes of four finite binary trees of triangles, while the elements of $\cT(p_F)$ are the leaves. 
}
\end{figure}

We presented in the introduction of this paper the construction of a mesh $\cT(F)$, associated to each asymmetric norm $F$. This definition is tied to the mesh generation method by recursive refinement presented in the previous subsection, since we claim that 
$$
\cT(F) = \cT(p_F),
$$
where for an elementary triangle $T$ of non-zero vertices $u,v$, the predicate value $p_F(T)$ stands for the test ``$u,v$ form an $F$-acute angle'', see the next proposition. 
Indeed, denote by $(T_k)_{k=0}^K$ the elementary triangles defined by the consecutive pairs $(u,v)$ of vectors subject to the test ``If $u,v$ form an $F$-acute angle'', in the construction of $\cT(F)$. These triangles, and their order of appearance, are shown on Figure \ref{fig:ref1}. The sequence $(T_k)_{k=0}^K$ constitutes an in-order transversal of the four binary trees in $\cE(p_F) \cup \cT(p_F)$, see again Figure \ref{fig:ref1} and the proof of Proposition \ref{prop:pCard}.

Let us observe that Point (i) and (ii) of Definition \ref{def:acuteMesh}, of $F$-acute meshes, hold by construction for any mesh of the form $\cT(p)$, where $p$ is an ASC, as observed right above Lemma \ref{lem:parents}. 
On the other hand the predicate $p_F$ is designed so as to enforce Point (iii) of this definition.

\begin{prop}
\label{prop:defMesh}
Let $F$ be an asymmetric norm on $\R^2$.
\begin{itemize}
\item 
Two vectors $u,v\in \R^2\sm\{0\}$ form an $F$-acute angle if  $\<u,v\> \geq 0$ and 
\be
\label{kappaFAngle}
\kappa(F) \sin |\varangle(u,v)| \leq 1.
\ee
\item
The predicate $p_F$ defined for any elementary triangle $T$ by 
\be
\label{def:pF}
p_F(T) \text{ holds if and only if the non-zero vertices of $T$ form an $F$-acute angle},
\ee
is an ASC, with associated constant 
$
s_{p_F} \leq \sqrt{\kappa(F)^2-1}.
$
\item Any vertex $u$ of $\cT(F)$ satisfies $\|u\| \leq 2 \kappa(F)$.
\end{itemize}
\end{prop}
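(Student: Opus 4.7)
The three points are handled in sequence, reusing the criteria of Lemma \ref{lem:acuteCriterion} and the smooth approximation from Lemma \ref{lem:cvNorm} throughout.

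\textbf{Point 1 (angular criterion).} First I reduce to the smooth case by approximating $F$ locally uniformly by smooth asymmetric norms $F_n$ with $\kappa(F_n)\leq \kappa(F)$ (Lemma \ref{lem:cvNorm}); the conditions $\<u,v\>\geq 0$ and $\kappa(F_n)\sin|\varangle(u,v)|\leq 1$ then also hold for every $F_n$, and an $F_n$-acute angle passes to the limit since \eqref{eq:acute} is closed under locally uniform convergence. So assume $F \in C^\infty(\R^2\sm\{0\})$. By Lemma \ref{lem:acuteCriterion}.1 it suffices to prove that $\<v,\nabla F(u)\>\geq 0$ (the twin inequality follows by exchanging $u$ and $v$). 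Set $m := \min_{\|w\|=1}F(w)$ and $M := \max_{\|w\|=1}F(w)$, so $\kappa(F) = M/m$. Using Euler's identity $\<u,\nabla F(u)\> = F(u)$, the convexity inequality $F(w)\geq F(u)+\<w-u,\nabla F(u)\>$ applied to any unit $w$ yields $\|\nabla F(u)\|\leq M$. Thus the unsigned angle $\alpha := |\varangle(u,\nabla F(u))|$ satisfies
\[
\cos\alpha = \frac{F(u)}{\|u\|\,\|\nabla F(u)\|} \geq \frac{m\|u\|}{\|u\|M} = \frac{1}{\kappa(F)},
\]
that is $\alpha \leq \arccos(1/\kappa(F))$. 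Since $\<u,v\>\geq 0$ one has $|\varangle(u,v)|\leq \arcsin(1/\kappa(F))$. Adding signed angles and using $\arccos(1/\kappa)+\arcsin(1/\kappa)=\pi/2$ yields $|\varangle(v,\nabla F(u))|\leq \pi/2$, hence $\<v,\nabla F(u)\>\geq 0$.

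\textbf{Point 2 ($p_F$ is an ASC).} For Heredity, let $T$ have non-zero vertices $u,v$ with $(u,v)$ $F$-acute, and let $T'$ be the child with vertices $(u,u+v)$. I need $F(u+\delta(u+v))\geq F(u)$ and $F((u+v)+\delta u)\geq F(u+v)$ for $\delta\geq 0$. The first follows by $1$-homogeneity:
\[
F(u+\delta(u+v)) = (1+\delta)F\!\l(u + \tfrac{\delta}{1+\delta}v\r) \geq (1+\delta)F(u) \geq F(u).
\]
For the second, the map $s\mapsto g(s):=F(v+su)$ is convex with $g(s)\geq g(0)$ for $s\geq 0$ (the $F$-acuteness hypothesis), hence non-decreasing on $[0,\infty)$, and so $g(1+\delta)\geq g(1)$, i.e.\ $F((1+\delta)u+v)\geq F(u+v)$. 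The other child $(u+v,v)$ is handled symmetrically. For Finiteness, let $T$ be an elementary triangle with non-zero vertices $u,v$ and $s(T)\geq \sqrt{\kappa(F)^2-1}$. By \eqref{sinScal}, $\sin|\varangle(u,v)| = (s(T)^2+1)^{-1/2}\leq 1/\kappa(F)$, and since $\<u,v\> = s(T)\geq 0$, Point 1 gives that $(u,v)$ is $F$-acute, i.e.\ $p_F(T)$ holds. Thus $s_{p_F}\leq\sqrt{\kappa(F)^2-1}$.

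\textbf{Point 3 (vertex norms).} Let $w$ be a non-zero vertex of $\cT(F)=\cT(p_F)$. If $w \in \{(\pm 1,0),(0,\pm 1)\}$ we are done. Otherwise $w = a+b$ was introduced during the refinement of some $R \in \cE(p_F)$ of non-zero vertices $a,b$. Since $p_F(R)$ fails, the contrapositive of Point 1 combined with $\<a,b\>\geq 0$ (elementarity) forces $\kappa(F)\sin|\varangle(a,b)|>1$; by \eqref{sinScal}, $\sin|\varangle(a,b)| = 1/(\|a\|\|b\|)$, so $\|a\|\|b\|<\kappa(F)$. Since $a,b\in\Z^2\sm\{0\}$, each of $\|a\|,\|b\|$ is at least $1$, so $\|a\|,\|b\|<\kappa(F)$ and therefore $\|w\|=\|a+b\|\leq\|a\|+\|b\|<2\kappa(F)$. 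Any other vertex of $\cT(F)$ is either $a$ or $b$ for some such $R$ (or lies in $\cT_0$), so the same bound applies.

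The main technical obstacle is Point 1: turning the scalar anisotropy bound $\kappa(F)$ into a geometric control of the angle between $u$ and its dual direction $\nabla F(u)$. Once this is secured, Points 2 and 3 are routine combinations of convexity, the explicit link \eqref{sinScal} between thinness and scalar product, and the structure of $\cE(p_F)$ recorded in Proposition \ref{prop:pCard}.
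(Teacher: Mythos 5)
Your proof is correct and follows the same overall architecture as the paper's: smooth approximation via Lemma~\ref{lem:cvNorm} plus closedness of \eqref{eq:acute} for Point~1, the same heredity/finiteness verification for Point~2, and the same parent-triangle bound for Point~3. The one place you genuinely diverge is the key estimate $\cos|\varangle(u,\nabla F(u))|\geq 1/\kappa(F)$: the paper imports it from \eqref{PhiBounded} in Proposition~\ref{prop:phi} (proved later, via integrating $\tan\vp_F$ and the identity $\frac{d}{d\theta}\ln F(e_\theta)=\tan\vp_F(\theta)$), whereas you derive it on the spot from Euler's identity $\<u,\nabla F(u)\>=F(u)$ together with the convexity bound $\|\nabla F(u)\|\leq\max_{\|w\|=1}F(w)$. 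Your version is more elementary and removes the forward dependency on Proposition~\ref{prop:phi}; the paper's version pays that cost because it needs $\vp_F$ and \eqref{PhiBounded} anyway for the average-case analysis. Your convexity-monotonicity argument for the second heredity inequality ($g(s)=F(v+su)$ convex and $\geq g(0)$ on $\R_+$ forces $g$ non-decreasing there) is also a valid, slightly slicker substitute for the paper's direct algebraic manipulation with the triangle inequality.
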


\begin{proof}
First Point.
In order to establish \iref{kappaFAngle}, we restrict in a first time our attention to asymmetric norms $F$ which are smooth: $F\in C^1(\R^2 \sm \{0\})$.
In that case Proposition \ref{prop:phi} (below, but proved independently) shows in \iref{PhiBounded} that 
$
\kappa(F) \cos \varangle (e_\theta, \nabla F(e_\theta)) \leq 1,
$
for any $\theta \in \R$.
Since $F$ is homogeneous, one has $\nabla F(u) = \nabla F(\lambda u)$ for any $\lambda>0$ and any $u \in \R^2 \sm \{0\}$, and therefore 
$$
\kappa(F) \cos \varangle (u, \nabla F(u)) \leq 1.
$$
Assuming \iref{kappaFAngle} we thus obtain
$$
|\varangle(v, \nabla F(u))| \leq |\varangle(v, u)| + |\varangle (u, \nabla F(u))| \leq \arcsin(1/\kappa(F)) + \arccos(1/\kappa(F)) = \pi/2,
$$
and therefore $\<v,\nabla F(u)\> \geq 0$. Likewise $\<u,\nabla F(v)\> \geq 0$, hence using Point 1 of Lemma \ref{lem:acuteCriterion} we conclude that the vectors $u,v$ form an $F$-acute angle.

Now let us consider an arbitrary, possibly non-smooth, asymmetric norm $F$, two vectors $u,v$ satisfying \iref{kappaFAngle}, and a sequence $(F_n)_{n \geq 0}$ of asymmetric norms as described in Lemma \ref{lem:cvNorm}. It follows from the above argument that the vectors $u,v$ form an $F_n$-acute angle for each $n\geq 0$. Since Definition \ref{def:acute} of $F$-acuteness only involves non-strict inequalities, we obtain taking the limit that $u,v$ form an $F$-acute angle.

Second Point. We need to check that $p_F$ satisfies the heredity and finiteness properties which characterize ASCs, see Definition \ref{def:Predicate}. 
Consider two vectors $u,v\in \R^2\sm \{0\}$ which form an $F$-acute angle. For each $\delta \geq 0$ we obtain 
\begin{eqnarray*}
F(u+v+\delta u) &=& F((1+\delta)(u+v) - \delta v)\\
& \geq & (1+\delta) F(u+v) - \delta F(v)\\
& = &F(u+v) + \delta( F(u+v)-F(v))\\
& \geq & F(u+v),
\end{eqnarray*}
where we used the triangular inequality in the second line, and the fact that $u$ and $v$ form an $F$-acute angle in the last. For the same reason $F(u+\delta(u+v)) = (1+\delta) F(u+\delta v/(1+\delta)) \geq (1+\delta) F(u) \geq F(u)$. Therefore $u$ and $u+v$ form an $F$-acute angle, and likewise $v$ and $u+v$ form an acute angle. As a result, if the predicate $p_F$ holds for an elementary triangle, then it also holds for its children. This establishes the heredity property in Definition \ref{def:Predicate}. 
For the finiteness property we consider an elementary triangle $T$, of non-zero vertices $u,v$, such that $s(T) \geq \sqrt{\kappa(F)^2-1}$. It follows from \iref{sinScal} that $\sin|\varangle(u,v)| \leq 1/\kappa(F)$, hence the first part of this proposition shows that $u,v$ form an $F$-acute angle. Thus $p_F(T)$ holds, which establishes the finiteness property of Definition \ref{def:Predicate}, and concludes the proof of the second point. 

Third Point. A triangle $T \in \cT(F)$ either belongs to $\cT_0$, or is the child of a triangle $T'$ in $\cE(p_F)$ which \emph{does not} satisfy the predicate $p_F$. In the first case there is nothing to prove, while in the second case the non-zero vertices $u,v$ of $T'$ satisfy 
$$
\max\{\|u\|^2, \|v\|^2\} \leq \|u\|^2\|v\|^2 = s(T)^2+1 \leq s_{p_F}^2+1 \leq \kappa(F)^2.
$$ 
We used the fact that $\min \{\|u\|,\|v\|\} \geq 1$, since these vectors have integer coordinates, and \iref{sinScal}.
Thus $\|u\|$ and $\|v\|$ are bounded by $\kappa(F)$, and therefore $\|u+v\| \leq 2 \kappa(F)$. This concludes the proof since the non-zero vertices of the triangle $T$ belong to $\{u,u+v,v\}$.
\end{proof}

At this point, we can establish the worst case analysis presented in Proposition \ref{prop:WorstCase}, except for \iref{CardSym} which is proved later in Corollary \ref{corol:CardSym}.
\begin{corollary}
\label{corol:WorstCase}
\begin{itemize}
\item
Let $\kappa \geq 1$, let $p_\kappa$ be the predicate ``$s(T) \geq \kappa$'', and let $\cT_\kappa := \cT(p_\kappa)$. 
Let also $F$ be an asymmetric norm such that $\kappa(F) \leq \kappa$. Then $\cT_\kappa$ is $F$-acute and
\begin{equation*}
\#(\cT(F)) \leq \#(\cT_\kappa) \leq C \kappa (1+\ln \kappa).
\end{equation*}
\item
For each $\tau \geq 1$, let $F_\tau$ be the anisotropic euclidean norm defined by the positive definite matrix 
$
M_\tau := \left(
\begin{array}{cc}
1 & \tau \\
\tau & 2\tau^2
\end{array}
\right)
$. Then $|\kappa(F_\tau) - 2\tau| \leq 1$ and $\#(\cT(F_\tau)) \geq 6+2 \lfloor \tau \rfloor$.
\end{itemize}
\end{corollary}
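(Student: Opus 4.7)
The plan is to handle the two bullets separately, using the generic machinery of Section \ref{sec:construction} for the upper bound and an explicit trace of Algorithm \ref{algo:TF} for the lower bound.

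For the first bullet, I would begin by checking that $p_\kappa(T) := [\,s(T) \geq \kappa\,]$ is indeed an ASC: heredity is immediate from \iref{sGrows} (which gives $s(T') \geq s(T)+1$), and the finiteness clause of Definition \ref{def:Predicate} holds trivially with $s_{p_\kappa} = \kappa$. The crucial observation is the implication $p_\kappa \Rightarrow p_F$ whenever $\kappa(F) \leq \kappa$. Indeed, if $T$ is elementary with non-zero vertices $u,v$ and $s(T) \geq \kappa$, then $\langle u,v\rangle = s(T) \geq 0$, and by \iref{sinScal}
$$
\kappa(F)\sin|\varangle(u,v)| \;\leq\; \kappa\,(s(T)^2+1)^{-1/2} \;\leq\; \kappa/\sqrt{\kappa^2+1} \;<\; 1,
$$
so Proposition \ref{prop:defMesh} (first point) yields that $u,v$ form an $F$-acute angle. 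This simultaneously proves that $\cT_\kappa$ is $F$-acute and, via Proposition \ref{prop:pCard} (Point 2), gives $\#(\cT(F)) = \#(\cT(p_F)) \leq \#(\cT(p_\kappa)) = \#(\cT_\kappa)$. Finally, Lemma \ref{lemma:angleCard} applied to $p_\kappa$ with $s_{p_\kappa} = \kappa$ supplies the bound $\#(\cT_\kappa) \leq C\kappa(1+\ln\kappa)$.

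For the second bullet, the anisotropy estimate is a direct computation: $\det M_\tau = \tau^2$ and $\mathrm{tr}\,M_\tau = 1+2\tau^2$ give the eigenvalues $\lambda_\pm = (1+2\tau^2 \pm \sqrt{1+4\tau^4})/2$, and since $\kappa(F_\tau)^2 = \lambda_+/\lambda_- = \lambda_+^2/\tau^2$, one obtains $\kappa(F_\tau) = \lambda_+/\tau$. Squaring the target inequality $|\kappa(F_\tau)-2\tau| \leq 1$ and simplifying reduces it to the elementary $4\tau(2\tau^2-1) \geq 0$, clearly true for $\tau \geq 1$. The substance of the bullet is the lower bound on $\#(\cT(F_\tau))$, which I plan to establish by exhibiting an explicit branch of boundary vertices. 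Using Lemma \ref{lem:acuteCriterion}, the two key quadratic computations are
$$
\langle(-k,1),\,M_\tau(-1,0)\rangle = k-\tau,\qquad
\langle(-k,1),\,M_\tau(-(k+1),1)\rangle = 2\tau^2-(2k+1)\tau+k(k+1).
$$
The first is negative for $k < \tau$, so Algorithm \ref{algo:TF} refuses the pair $((-k,1),(-1,0))$ and inserts the child $(-(k+1),1)$; the second, a quadratic in $\tau$ of discriminant $-4k^2-4k+1 < 0$ for every $k \geq 1$, is strictly positive, so $(-(k+1),1)$ is accepted on the next test and the algorithm advances to the next rung. Threading these facts through the algorithm shows that each $(-k,1)$ for $1 \leq k \leq \lfloor\tau\rfloor+1$ appears in the output list $L$ between $(0,1)$ and $(-1,0)$. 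The symmetry $F_\tau(-u) = F_\tau(u)$ (inherited from $M_\tau = M_\tau^\trans$) makes $\cT(F_\tau)$ invariant under $u \mapsto -u$, so the symmetric branch from $(0,-1)$ to $(1,0)$ carries the mirror vertices $(k,-1)$, $k = 1,\ldots,\lfloor\tau\rfloor+1$; together with the four axis-aligned vertices $(\pm 1,0)$ and $(0,\pm 1)$ this yields at least $6+2\lfloor\tau\rfloor$ distinct boundary vertices of $\cT(F_\tau)$.

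The step I expect to be most delicate is the bookkeeping within the algorithmic trace: one must certify that no extraneous refinement is triggered along the traced branch (which is precisely the content of the positivity of the second quadratic above), and then invoke the $-\Id$-symmetry of the mesh cleanly. For the latter the simplest justification is that when $F$ is even, the predicate $p_F$ satisfies $p_F(-T) = p_F(T)$, so the refinement procedure of Definition \ref{def:TE} commutes with $-\Id$ and $\cT(F_\tau)$ is $(-\Id)$-invariant as a set.
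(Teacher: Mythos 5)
Your proposal is correct. The first bullet follows the paper's own route essentially verbatim: both arguments come down to the implication $p_\kappa \Rightarrow p_F$ (the paper gets it from the bound $s_{p_F}\leq\sqrt{\kappa(F)^2-1}\leq\kappa$ of Proposition \ref{prop:defMesh}, you re-derive the same fact from \iref{sinScal} and \iref{kappaFAngle}), followed by Point 2 of Proposition \ref{prop:pCard} and Lemma \ref{lemma:angleCard}. In the second bullet you diverge in two ways, both harmless. For the anisotropy estimate the paper avoids explicit eigenvalues by writing $\kappa+\kappa^{-1}=\Tr(M_\tau)/\sqrt{\det M_\tau}=2\tau+\tau^{-1}$, whence $|\kappa-2\tau|=|\tau^{-1}-\kappa^{-1}|\leq 1$ immediately; your computation of $\lambda_\pm$ reaches the same conclusion with more algebra. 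For the cardinality lower bound the paper does \emph{not} trace Algorithm \ref{algo:TF}: it merely exhibits a family of elementary triangles whose non-zero vertices fail the test $\<u,M_\tau v\>\geq 0$ of Lemma \ref{lem:acuteCriterion}, and invokes $\#(\cT(F_\tau))=4+\#(\cE(p_{F_\tau}))$ from \iref{upperTP}. That route needs only your first sign computation $\<(-k,1),M_\tau(-1,0)\>=k-\tau$; your second quadratic, the positivity of $\<(-k,1),M_\tau(-(k+1),1)\>$, is required only because counting boundary vertices of the output mesh forces you to certify that no extra refinement occurs along the branch, which is more than a lower bound on $\#(\cE(p_{F_\tau}))$ demands. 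Both routes are valid, yours being slightly longer but also more informative (it pins down the branch exactly), and your $(-\Id)$-symmetry argument is sound. One shared blemish: when $\tau$ is an integer the pair $((-\tau,1),(-1,0))$ gives $\<(-\tau,1),M_\tau(-1,0)\>=0\geq 0$ and is therefore accepted as $F_\tau$-acute, so your range $1\leq k\leq\lfloor\tau\rfloor+1$ (and likewise the paper's count of $2(1+\lfloor\tau\rfloor)$ rejected triangles for $0\leq r<\tau$) overshoots by one per branch; the bound degrades harmlessly to $4+2\lfloor\tau\rfloor$ in that case, which does not affect the intended conclusion $\#(\cT(F_\tau))\gtrsim\kappa(F_\tau)$.
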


\begin{proof}
First Point.
For any asymmetric norm $F$ such that $\kappa(F) \leq \kappa$, we have $s_{p_F} \leq \sqrt{\kappa(F)^2-1} \leq \sqrt{\kappa^2-1} \leq \kappa$. Hence $p_\kappa \Ra p_F$, which implies simultaneously that $\#(\cT(F)) \leq \#(\cT_\kappa)$ (using Point 2 of Proposition \ref{prop:pCard}) and that $\cT_\kappa$ is $F$-acute (since $p_F$ holds for all the elements of $\cT_\kappa$). The upper bound on $\#(\cT_\kappa)$ was proved in Lemma \ref{lemma:angleCard}.

Second point. The $2 \times 2$ symmetric matrix $M_\tau$ is positive definite since its trace and determinant are both positive.
Denoting by $0<\lambda^2 \leq \mu^2$ the eigenvalues of $M_\tau$, where $\lambda$ and $\mu$ are positive, one has 
$$
\kappa(F_\tau)=\frac \mu \lambda, \quad \Tr(M_\tau) = \lambda^2+\mu^2 = 2\tau^2+1, \quad \det (M_\tau) = \lambda^2\mu^2 = 2 \tau^2 -\tau^2 = \tau^2.
$$
Hence denoting $\kappa := \kappa(F_\tau)$ 
$$
\frac 1 \kappa + \kappa = \frac \lambda \mu+ \frac \mu \lambda = \frac{\Tr(M_\tau)}{\sqrt{\det M_\tau}} = \frac{2 \tau^2+1} \tau = 2\tau + \frac 1 \tau.
$$
Therefore $|\kappa-2 \tau| = |\tau^{-1}-\kappa^{-1}| \leq 1$, since $\kappa \geq 1$ and $\tau \geq 1$.

We next observe that the elementary triangle of vertices $(1,0)$ and $(-r,1)$ is not $F_\tau$-acute for $0 \leq r < \tau$, since $\<(r,-1),M_\tau (1,0)\> = \<(r,-1),(1, \tau)\> = r-\tau <0$. Considering these triangles and the symmetric ones with respect to the origin, we obtain $2(1+\lfloor \tau \rfloor)$ non $F_\tau$-acute elementary triangles. Hence $\#(\cE(p_{F_\tau})) \geq 2 (1+\lfloor \tau \rfloor)$, and therefore $\#(\cT(F_\tau)) =4+\#(\cE(p_{F_\tau})) \geq 6+2 \lfloor \tau \rfloor$ using \iref{upperTP}, which concludes the proof.
\end{proof}

In the next section, we estimate the cardinality of $\cT(F)$
for asymmetric norms $F$ which are smooth on $\R^2 \sm \{0\}$. These results are transferred to arbitrary asymmetric norms, using the approximation result Lemma \ref{lem:cvNorm} and the following lemma.

\begin{lemma}
\label{lem:limCard}
Let $F$ be an asymmetric norm, and let $(F_n)_{n \geq 0}$ be a sequence of asymmetric norms such that $F_n \to F$ locally uniformly as $n \to \infty$.
Then 
\begin{eqnarray}
\label{lim:cardT}
\#(\cT(F)) &\leq& \liminf_{n \to \infty} \#(\cT(F_n)),\\
\label{lim:intCardT}
\int_0^{2\pi} \#(\cT(F^\htheta))\, d\theta &\leq& \liminf_{n \to \infty} \int_0^{2\pi} \#(\cT(F_n^\htheta)) \, d\theta.
\end{eqnarray}
%
\end{lemma}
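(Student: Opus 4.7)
The guiding observation I would rely on is that non-$F$-acuteness of an elementary triangle is an ``open'' condition under locally uniform perturbation of $F$. If an elementary triangle $T$ with non-zero vertices $u,v$ belongs to $\cE(p_F)$, then by Definition \ref{def:acute} there exists $\delta\ge 0$ realizing either $F(u+\delta v)<F(u)$ or $F(v+\delta u)<F(v)$ as a \emph{strict} inequality. Locally uniform convergence $F_n\to F$ preserves strict inequalities between values at fixed points, so the same strict inequality continues to hold for $F_n$ once $n$ is large enough. Consequently $T\in\cE(p_{F_n})$ for all sufficiently large $n$.

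To prove \eqref{lim:cardT}, I would exploit the finiteness of $\cE(p_F)$ supplied by \eqref{upperTP} together with Proposition \ref{prop:pCard}: enumerate $\cE(p_F)=\{T_1,\dots,T_K\}$, apply the previous observation to each $T_i$ separately, and take $N$ large enough so that all of $T_1,\dots,T_K$ lie in $\cE(p_{F_n})$ as soon as $n\ge N$. This yields $K=\#(\cE(p_F))\le\#(\cE(p_{F_n}))$ for $n\ge N$, and taking the liminf and adding $4$ via the identity $\#(\cT(\cdot))=4+\#(\cE(p_\cdot))$ closes the first inequality.

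For \eqref{lim:intCardT}, I would fix $\theta\in[0,2\pi]$ and note that since rotation by $R_\theta$ is an isometry of $\R^2$, the sequence $F_n^\htheta$ still converges to $F^\htheta$ locally uniformly. Applying \eqref{lim:cardT} to this rotated sequence gives the pointwise bound
\[
\#(\cT(F^\htheta))\le\liminf_{n\to\infty}\#(\cT(F_n^\htheta)),\qquad \theta\in[0,2\pi].
\]
Fatou's lemma then transfers this pointwise bound to the integral, yielding \eqref{lim:intCardT}. To apply Fatou cleanly I would briefly verify measurability of $\theta\mapsto\#(\cT(F^\htheta))$: writing it as $4+\sum_T \mathbf 1_{T\in\cE(p_{F^\htheta})}$ indexed over the finitely many elementary triangles $T$ with $s(T)\le\sqrt{\kappa(F)^2-1}$ (cf.\ Proposition \ref{prop:defMesh}), each indicator is measurable because the set of $\theta$ where $T\in\cE(p_{F^\htheta})$ is in fact an open subset of $[0,2\pi]$, by the same strict-inequality argument.

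The only subtle point, which one must keep straight, is the direction of the inequality. $F$-acuteness is defined by non-strict inequalities and is therefore a \emph{closed} condition; its negation, membership in $\cE(p_F)$, is consequently \emph{open} under locally uniform limits. This is precisely why a liminf (not a limsup) appears on the right-hand side of both \eqref{lim:cardT} and \eqref{lim:intCardT}, and once this is handled correctly there is no serious obstacle remaining.
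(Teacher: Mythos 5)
Your proof is correct and takes essentially the same route as the paper's: both hinge on the observation that membership in $\cE(p_F)$ persists under locally uniform perturbation of $F$ (i.e.\ $F$-acuteness is a closed condition), and both obtain \eqref{lim:intCardT} from the pointwise bound via Fatou's lemma. The only cosmetic difference is that the paper also uses Fatou (for the counting measure) to deduce $\#(\cE(p_F)) \leq \liminf_n \#(\cE(p_{F_n}))$, where you instead enumerate the finite set $\cE(p_F)$ directly; your explicit measurability check is a harmless extra the paper omits.
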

\begin{proof}
To avoid notational clutter, we denote $\cE(F) := \cE(p_F)$, and $\cE(F_n) := \cE(p_{F_n})$, see Definition \ref{def:TE}.
If an elementary triangle $T$ belongs to $\cE(F)$, then it belongs to $\cE(F_n)$ for all $n$ sufficiently large, since $F$-acuteness is a closed condition, see Definition \ref{def:acute}. Hence 
$$
\cE(F) \subset \bigcup_{N \geq 0} \bigcap_{n \geq N} \cE(F_n).
$$
This immediately implies that $\#(\cE(F)) \leq \liminf_{n \to \infty} \#(\cE(F_n))$, by applying Fatou's lemma to the characteristic functions of $\cE(F)$ and $\cE(F_n)$. Inequality \iref{lim:cardT} then follows from the identity $\#(\cT(F)) = 4+\#(\cE(F))$, see \iref{upperTP}. 


The second estimate \iref{lim:intCardT} immediately follows from the first one \iref{lim:cardT}, by observing $F_n^\htheta \to F^\htheta$ locally uniformly as $n \to \infty$ for any $\theta\in [0,2 \pi]$, and applying Fatou's lemma on this interval.
\end{proof}

\section{Average Complexity}
\label{sec:cardTF}

This section is devoted to the estimate of the cardinality $\#(\cT(F))$ of the stencils used in the FM-ASR, and of the average value of $\#(\cT(F^\htheta))$, as $\theta\in [0,2\pi]$.
Estimates are obtained for increasingly general types of (asymmetric) norms $F$: anisotropic euclidean norms in the first subsection, 
symmetric norms in the second, and finally asymmetric norms in the third. Each subsection builds on the estimate of the former one, hence they are not independent.

\subsection{Anisotropic euclidean norms}

An anisotropic euclidean norm $F$, is a norm given by a symmetric positive definite matrix $M$: for all $u \in \R^2$, $F(u) := \sqrt{u^\trans M u}$.
Our first lemma shows that the triangles refined during the construction of $\cT(F)$ 
are aligned with the eigenspace associated to the small eigenvalue of $M$, see also Figure \ref{fig:ref2}.

\begin{lemma}
\label{lem:eigenvec}
\begin{itemize}
\item
Let $F$ be an anisotropic euclidean norm, given by a matrix $M\in S_2^+$. If the non-zero vertices of an elementary triangle $T$ do not form an $F$-acute angle, then $T$ contains an eigenvector for the smallest eigenvalue of $M$ in its interior.
\item
For any anisotropic euclidean norm $F$ one has $\#(\cT(F)) \leq 6+2 \kappa(F)$.
\end{itemize}
\end{lemma}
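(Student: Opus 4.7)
For Part 1, I would change to an orthonormal eigenbasis of $M$, writing $M = \diag(\lambda_1, \lambda_2)$ with $0 < \lambda_1 \leq \lambda_2$, and letting $(\tilde u_1, \tilde u_2), (\tilde v_1, \tilde v_2)$ denote the eigenbasis coordinates of the non-zero vertices $u,v$ of $T$. If $\lambda_1 = \lambda_2$ there is nothing to prove, since then $\langle u, Mv\rangle = \lambda_1 \langle u, v\rangle \geq 0$ by the elementariness condition \eqref{DetScal}, so $T$ is automatically $F$-acute. For $\lambda_1 < \lambda_2$, the hypothesis $\langle u, Mv\rangle = \lambda_1 \tilde u_1 \tilde v_1 + \lambda_2 \tilde u_2 \tilde v_2 < 0$, combined with $\langle u, v\rangle = \tilde u_1 \tilde v_1 + \tilde u_2 \tilde v_2 \geq 0$, forces $\tilde u_2 \tilde v_2 < 0$ strictly: the case $\tilde u_2 \tilde v_2 > 0$ would give, after eliminating $\tilde u_1 \tilde v_1$ between the two inequalities, $\lambda_1 > \lambda_2$; the case $\tilde u_2 \tilde v_2 = 0$ would make $u$ or $v$ itself an eigenvector for $\lambda_1$, reducing $\langle u, Mv\rangle$ to $\lambda_1 \langle u, v\rangle \geq 0$, contradicting the hypothesis. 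Therefore the open segment $(u,v)$ crosses the line $\R e_1$ at a point $p$, and $p \neq 0$ since $|\det(u,v)| = 1$ prevents $u, v$ from being antipodal through the origin. Any sufficiently small positive multiple of $p$ is then an eigenvector for $\lambda_1$ lying in the interior of $T$.

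For Part 2, I would use \eqref{upperTP} to reduce to bounding $\#(\cE(p_F))$, and show that this set is the union of at most two chains in the refinement forest of Lemma \ref{lem:parents}, each of length at most $1 + \kappa(F)$.

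The key observation is that a non $F$-acute elementary triangle $T$ with non-zero vertices $u,v$ has at most one non $F$-acute child. Indeed the conditions ``$T'$ non $F$-acute'' and ``$T''$ non $F$-acute'' translate to $\langle u, M(u+v)\rangle < 0$ and $\langle u+v, Mv\rangle < 0$ respectively; summing these two inequalities yields $F(u+v)^2 = \langle u+v, M(u+v)\rangle < 0$, contradicting the positive-definiteness of $M$ (note that $u+v \neq 0$ since $\langle u, v\rangle \geq 0$ rules out antipodality). Consequently, for each of the four binary trees $\cP_i$ of Lemma \ref{lem:parents}, the intersection $\cE(p_F) \cap \cP_i$ is a (possibly empty) chain rooted at $T_i$. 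A direct computation at the four roots of $\cT_0$ gives $\langle u, Mv\rangle = M_{12}$ for $T_1, T_3$ and $\langle u, Mv\rangle = -M_{12}$ for $T_2, T_4$, so depending on the sign of the off-diagonal entry $M_{12}$ at most two of these chains are non-empty.

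It remains to bound the length $n$ of a single chain $T_i = T^{(0)}, T^{(1)}, \ldots, T^{(n-1)}$. By \eqref{sGrows} the scalar $s(\cdot)$ strictly increases by at least one at each refinement, and since $s(T_i) = 0$ we obtain $s(T^{(n-1)}) \geq n - 1$. On the other hand Proposition \ref{prop:defMesh} gives $s(T^{(n-1)}) \leq s_{p_F} \leq \sqrt{\kappa(F)^2 - 1} \leq \kappa(F)$, whence $n \leq 1 + \kappa(F)$. Summing over the (at most two) non-empty chains yields $\#(\cE(p_F)) \leq 2 + 2\kappa(F)$, and therefore $\#(\cT(F)) = 4 + \#(\cE(p_F)) \leq 6 + 2\kappa(F)$. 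The main obstacle is isolating the ``at most one non $F$-acute child'' property, which replaces the generic binary branching of the refinement forest by four linear chains; the remaining estimates are routine bookkeeping using the monotonicity of $s$.
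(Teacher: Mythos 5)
Your proof is correct. Part 1 is essentially the paper's argument: both decompose $\langle u,Mv\rangle$ along the eigenbasis, deduce from $\langle u,Mv\rangle<0$ and $\langle u,v\rangle\geq 0$ that the components of $u$ and $v$ transverse to the small eigenspace have opposite signs, and conclude that the edge $[u,v]$ crosses that eigenline away from the origin. Part 2 reaches the same structural conclusion as the paper (the non-acute triangles form at most two single branches of length at most $1+\kappa(F)$ in the refinement forest, whence $\#(\cE(p_F))\leq 2+2\kappa(F)$ and the bound via $\#(\cT(F))=4+\#(\cE(p_F))$), but by a genuinely different route. The paper derives ``at most one non-acute child'' and ``at most two non-acute roots'' geometrically from Part 1: each non-acute triangle contains the small eigendirection in its interior, and the interiors of siblings (resp.\ of the four roots) are disjoint. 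You instead prove the branching bound algebraically, by observing that the non-acuteness of both children would give $\langle u,M(u+v)\rangle<0$ and $\langle u+v,Mv\rangle<0$, which sum to $F(u+v)^2<0$ --- a contradiction --- and you count the non-acute roots by the sign of $M_{12}$. Your argument is self-contained (it does not use Part 1 at all) and arguably cleaner; the paper's version has the advantage of explaining geometrically \emph{where} the refinement concentrates, which is reused in Proposition \ref{prop:SumAu} to localize the triangles of $\cE_\theta$ around the eigendirection. The only cosmetic slip is that a non-acute triangle satisfies $s(T)<s_{p_F}$ (strictly), not $s(T)\leq s_{p_F}$; this does not affect the bound.
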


\begin{proof}
First Point. Let $0<\lambda \leq \mu$ the eigenvalues of $M$, and let $e$ a normalized eigenvector of $M$ associated to the eigenvalue $\lambda$. Let also $u,v$ be the non-zero vertices of $T$. Then
$$
\<u, M v\> = \lambda \<u,e\>\<v,e\> + \mu \det(u,e) \det(v,e) = \lambda \<u,v\> + (\mu-\lambda) \det(u,e) \det(v,e),
$$
where we used the identity $\<u,v\>= \<u,e\>\<v,e\> + \det(u,e) \det(v,e)$. 
Since $u,v$ do not form an $F$-acute angle, we have $\<u,M v\><0$ using Point 2 of Lemma \ref{lem:acuteCriterion}.
On the other hand $\<u,v\> \geq 0$. It follows that $\det(u,e) \det(v,e) < 0$, and therefore $\det(u,e)$ and $\det(v,e)$ are non-zero and have opposite signs. Hence by continuity (or linearity) there exists $t\in (0,1)$ such that $\det(t u+(1-t) v, e)=0$, which concludes the proof of this point.
%

We next turn to the proof of second point, and for that purpose we adopt the notations of Proposition \ref{prop:pCard} and consider the four trees $(\cP_i(p_F))_{1 \leq i \leq 4}$. 
It follows from the first point of this lemma that two of these trees are empty, and that the other two have a single branch, see also Figure \ref{fig:ref2}. 
The number of elements of these single branched trees is bounded by $1+s_{P_F} = 1+\sqrt{\kappa(F)^2-1} \leq 1+\kappa(F)$, using \iref{sGrows} and the second point of Proposition \ref{prop:defMesh}.
We finally obtain using \iref{cardTPi} that 
$
\#(\cT(F)) \leq 4+2 \times 0 + 2\left(1+\kappa(F)\right) = 6+2\kappa(F) 
$
which concludes the proof.
\end{proof}

\begin{figure}
\begin{center}
\includegraphics[width=3cm]{\pathPic/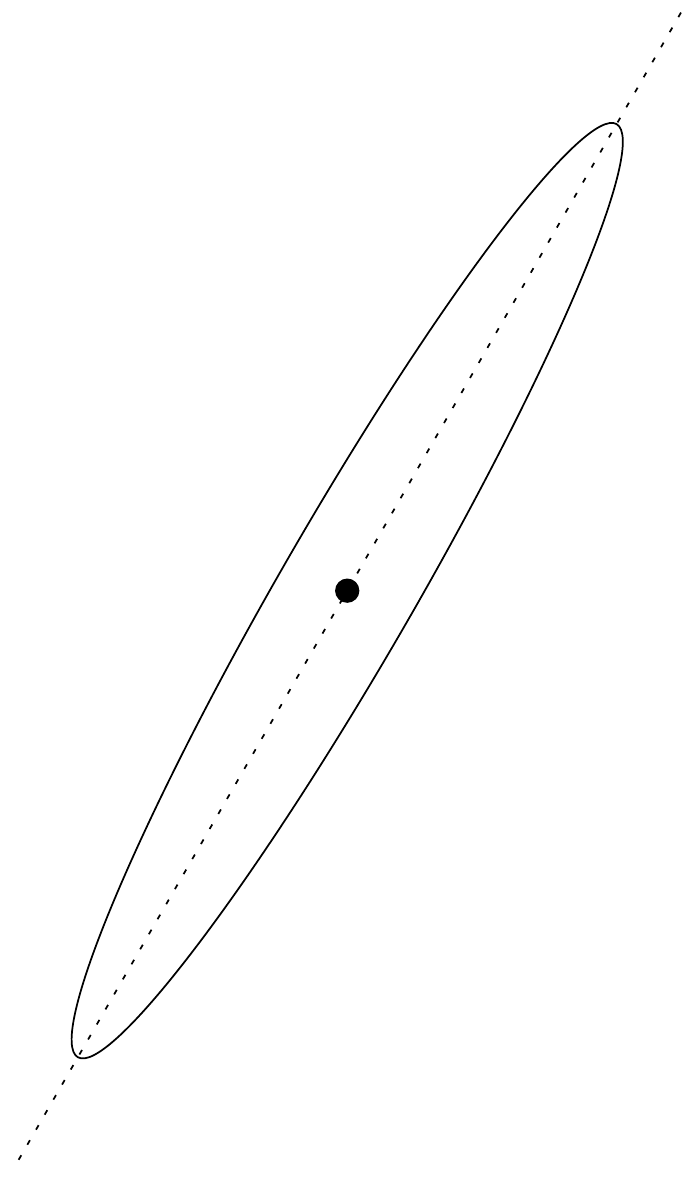}
\includegraphics[width=3cm]{\pathPic/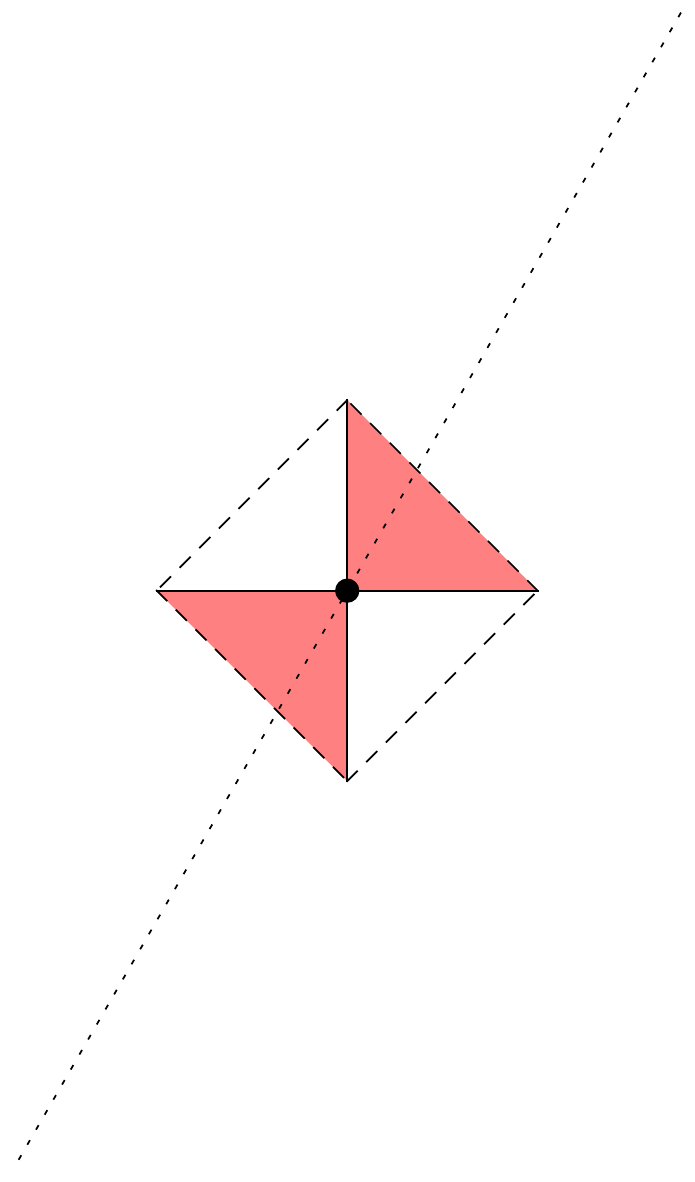}
\includegraphics[width=3cm]{\pathPic/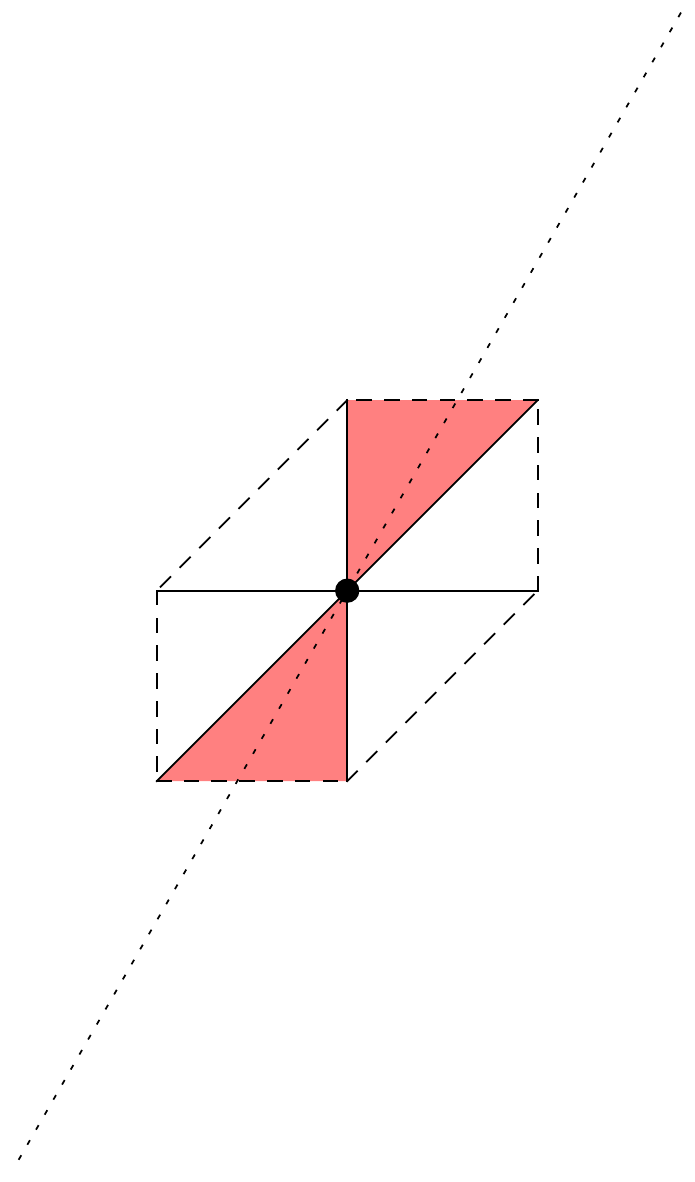}
\includegraphics[width=3cm]{\pathPic/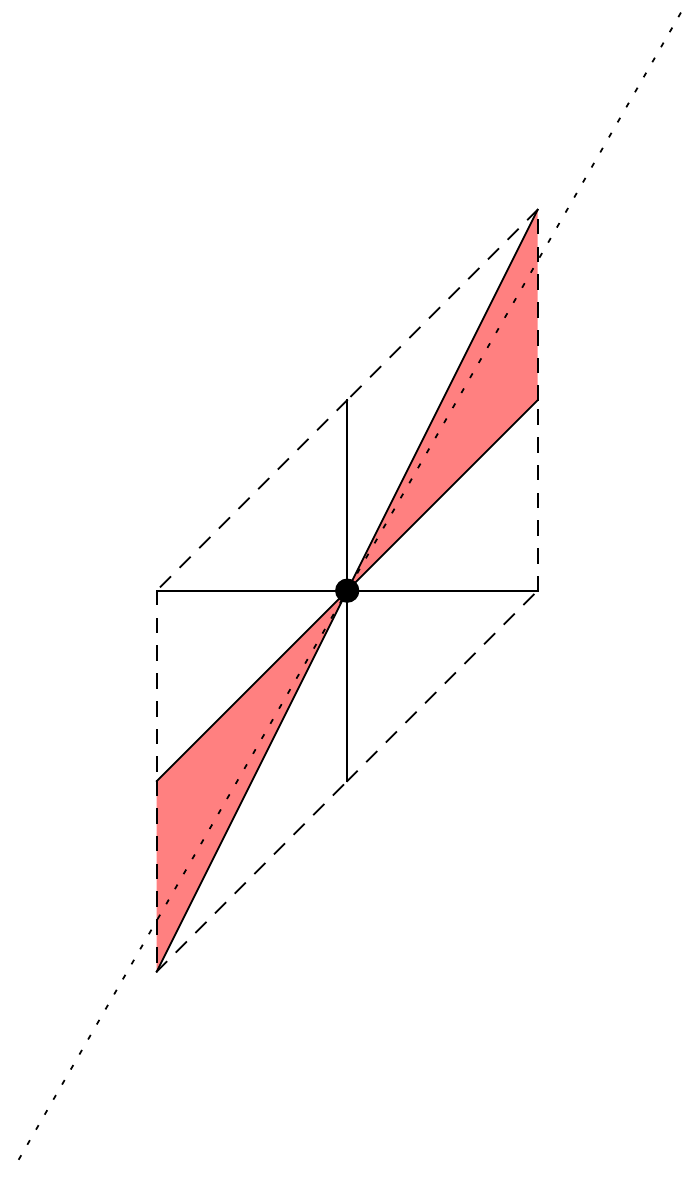}
\includegraphics[width=3cm]{\pathPic/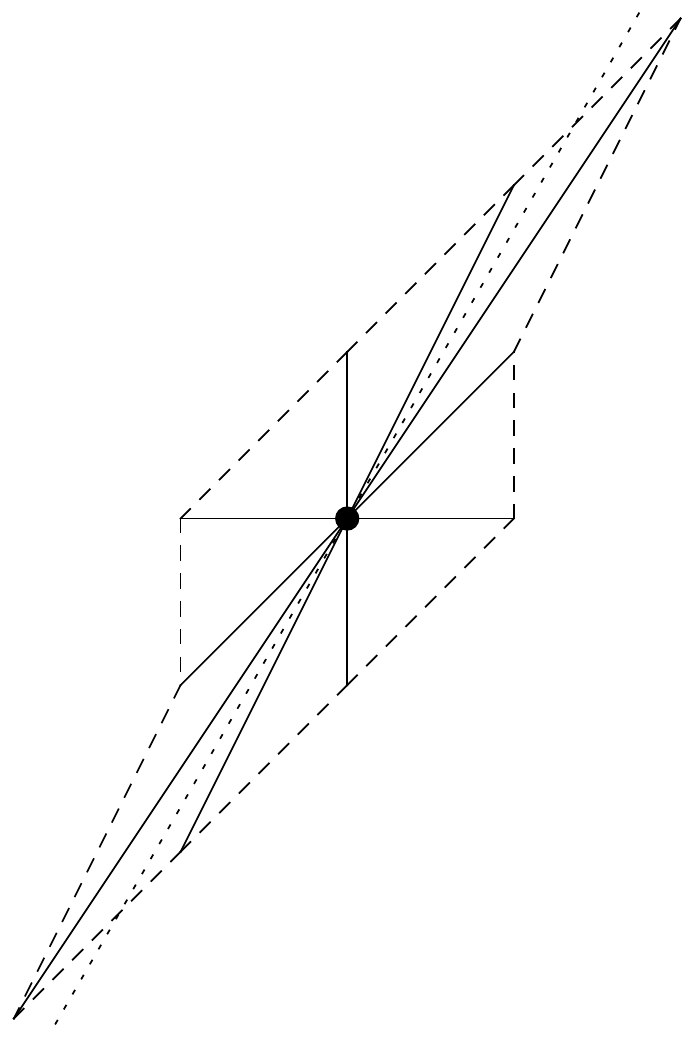}
\end{center}
\caption{
\label{fig:ref2}
Unit ball $\{u; \, F(u) \leq 1\}$ of a norm $F$ given by a positive definite matrix $M$, of anisotropy ratio $\kappa(F)=8$ (left). Eigenspace associated to the small eigenvalue of $M$ (dotted line). Generation of $\cT(F)$ by recursive bisection (second left to right). All refined triangles (colored) contain an eigenvector associated to the small eigenvalue of $M$ in their interior.
}
\end{figure}





\begin{definition}[The following definitions are restricted to this section]
\label{def:FEucl}
We consider a fixed constant $\kappa\geq 1$, and denote by 
$F$ the norm defined by the diagonal matrix $D$ of entries $(\kappa^{-1}, \kappa)$, in such way that $F(x,y) = \sqrt{\kappa^{-1}x^2+\kappa y^2}$.
\begin{itemize}
\item
For each $\theta \in [0,2\pi]$ the norm $F^\theta$ is of anisotropic euclidean type, defined by the matrix $M_\theta = R_\theta D R_\theta^\trans$, and satisfies $\kappa(F^\theta) = \kappa(F) = \kappa$.
\item
We denote by $\cE_\theta$, $\theta \in [0,2\pi]$, the collection of elementary triangles which non-zero vertices do not form an $F^\theta$-acute angle. In other words $\cE_\theta := \cE(p_{F^\theta})$.
\item
For each elementary triangle $T$, we define $I_T := \{\theta \in [0,2\pi]; \, T \in \cE_\theta\}$. 
\end{itemize}
\end{definition}
It follows from \iref{upperTP}, that for any $\theta \in [0, 2 \pi]$ one has 
\be
\label{ineqTE}
\#(\cT(F^\htheta)) = 4 + \#(\cE_\theta).
\ee
Furthermore, we have by construction 
\be
\label{sumElemT}
\int_0^{2 \pi} \#(\cE_\theta) \, d\theta = \sum_T |I_T|,
\ee
where $T$ ranges over all elementary triangles, and $|I_T|$ denotes the Lebesgue measure of $I_T$.

\begin{prop}
\label{prop:SumAu}
For any fixed $u\in \Z^2 \sm \{0\}$, let $A_u$ the collection of all elementary triangles $T$, containing $u$ as a vertex, and such that the other non-zero vertex $v$ satisfies $\|u\| \leq \|v\|$. Then
\be
\label{sumIT}
\sum_{T\in A_u} |I_T| \leq \frac{C(1+\ln \kappa)}{\|u\|^2},
\ee
and furthermore this sum equals $0$ if $\|u\| \geq \sqrt \kappa$. 
\end{prop}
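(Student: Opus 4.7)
My plan is to use Lemma \ref{lem:eigenvec} to translate the event $T \in \cE_\theta$ into a geometric condition on the unit vector $e_\theta$, and then to estimate the sum over $T \in A_u$ as a harmonic sum indexed by a coset of the sublattice $\Z u \subset \Z^2$.

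I first observe that $e_\theta$ is an eigenvector of $M_\theta = R_\theta D R_\theta^\trans$ for its smaller eigenvalue $\kappa^{-1}$. By Lemma \ref{lem:eigenvec} (first point), if $T \in \cE_\theta$ has non-zero vertices $u,v$, then one of $\pm e_\theta$ must lie in the open cone $\{\alpha u + \beta v : \alpha,\beta > 0\}$, of angular aperture $|\varangle(u,v)|$. The set of such $\theta \in [0,2\pi]$ is the union of two antipodal arcs of total measure $2|\varangle(u,v)|$, so $|I_T| \leq 2|\varangle(u,v)|$. Combining \iref{sinScal} with the inequality $\vp \leq (\pi/2)\sin\vp$ on $[0,\pi/2]$ (valid since $\<u,v\>\geq 0$ by \iref{DetScal}) yields $|I_T| \leq \pi/(\|u\|\,\|v\|)$. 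Furthermore, Proposition \ref{prop:defMesh} (first point) shows that $\kappa\sin|\varangle(u,v)| \leq 1$ implies $F^\theta$-acuteness of $u,v$ for every $\theta$ (recall $\kappa(F^\theta) = \kappa$), hence $I_T = \emptyset$ whenever $\|u\|\,\|v\| \geq \kappa$. For $T \in A_u$ with $I_T \neq \emptyset$ this forces $\|u\|^2 \leq \|u\|\,\|v\| < \kappa$, i.e.\ $\|u\| < \sqrt\kappa$, which already establishes the second claim of the proposition.

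Now assume $\|u\| < \sqrt\kappa$. Since $|\det(u,v)| = 1$ holds for some $v$, the vector $u \in \Z^2$ is primitive, and for each $\ve \in \{-1,+1\}$ the integer solutions of $\det(u,\cdot) = \ve$ form a single coset of $\Z u$. Writing $v = s\, e_u + \ve\|u\|^{-1} e_u^\perp$ with $e_u := u/\|u\|$ and $s := \<v, e_u\>$, as $v$ runs through this coset $s$ describes an arithmetic progression with common difference $\|u\|$ and initial value $s_0 \in [0, \|u\|)$. The constraints $\<u,v\>\geq 0$ (i.e.\ $s \geq 0$) and $\|v\| \leq \kappa/\|u\|$ (i.e.\ $s \leq \kappa/\|u\|$) leave at most $2(1 + \kappa/\|u\|^2)$ admissible $s$-values, counting both signs of $\ve$. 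Labelling the admissible $v$'s in increasing order of $s$, the $k$-th satisfies $s \geq (k-1)\|u\|/2$. Using $\|v\| \geq \max(s, 1/\|u\|)$ together with the hypothesis $\|v\| \geq \|u\|$, which removes $\cO(1)$ small-$s$ terms contributing $\cO(1/\|u\|)$ in total, I obtain
\[
\sum_v \frac{1}{\|v\|} \;\leq\; \frac{C}{\|u\|} \;+\; \frac{C}{\|u\|}\sum_{1 \leq k \leq 2\kappa/\|u\|^2} \frac{1}{k} \;\leq\; \frac{C(1+\ln\kappa)}{\|u\|},
\]
and multiplying by $\pi/\|u\|$ via the pointwise bound $|I_T| \leq \pi/(\|u\|\,\|v\|)$ yields the claimed inequality.

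The main obstacle is the lattice enumeration in the last paragraph: correctly identifying the two arithmetic progressions arising from $\ve = \pm 1$ (with possibly distinct offsets $s_0$ and $\|u\|-s_0$), handling the boundary regime $\|u\| \approx 1$ where the $\|v\| \geq \|u\|$ constraint is nearly vacuous and $v$'s with $s \approx 0$ must be included, and isolating the $\ln\kappa$ factor as a partial harmonic sum truncated at $k \approx \kappa/\|u\|^2$.
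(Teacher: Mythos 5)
Your proposal is correct and follows essentially the same route as the paper: the bound $|I_T|\leq 2|\varangle(u,v)|\leq \pi/(\|u\|\|v\|)$ via Lemma \ref{lem:eigenvec} and the concavity of sine, the vanishing of the sum for $\|u\|\geq\sqrt\kappa$ via \iref{kappaFAngle}, and the enumeration of the admissible $v$ as two arithmetic progressions (cosets of $\Z u$) whose harmonic sum yields the $\ln\kappa$ factor. Your parametrization by $s=\<v,e_u\>$ is just a repackaging of the paper's $v=v_\ve+\lambda u$, and the minor off-by-one constants in the ordering of the interleaved progressions do not affect the estimate.
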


\begin{proof}

Let $T$ be an elementary triangle such that $I_T\neq \emptyset$, and let $u,v$ be the non-zero vertices of $T$, with $\|u\| \leq \|v\|$. Since $\kappa(F^\htheta) = \kappa$ for any $\theta \in \R$, we obtain using  \iref{kappaFAngle} and \iref{def:angle}
\be
\label{kappau2}
\frac 1 \kappa 
< \sin |\varangle(u,v)| = \frac 1 {\|u\| \|v\|} \leq \frac 1 {\|u\|^2},
\ee
and therefore $\|u\| < \sqrt{\kappa}$. 
It follows as announced that \iref{sumIT} is zero if $\|u\| \geq \sqrt \kappa$.

It follows from Lemma \iref{lem:eigenvec} that $t u+(1-t) v$ is proportional to $e_\theta$, for some $t \in ]0,1[$. Hence 
\be
\label{ITUV}
|I_T| \leq 2 |\varangle(u,v)| = 2 \arcsin\left(\frac 1 {\|u\| \|v\|}\right) \leq \frac \pi {\|u\| \|v\|},
\ee
where we used the concavity estimate $\sin(\pi x/2) \geq  x$ for $x \in [0, 1]$.

We denote by $v_\ve$, for $\ve \in \{-1,1\}$, the element of $\Z^2$ for which the scalar product $\<u, v_\ve\>$ is non-negative and minimal, under the constraint that $\det(u,v_\ve) = \ve$ and $\|v_\ve \| \geq \|u\|$.

If $\det (u,v) = \ve$, then $v-v_\ve = \lambda u$ for some $\lambda\in \R$. Observing that $u,v,v_\ve$ have integer coordinates, and that $u$ has coprime coordinates, since $|\det(u,v)|=1$, we obtain that $\lambda$ is an integer. The scalar $\lambda$ is non-negative since $\<u,v_\ve\> \leq \<u,v\> = \<u,v_\ve\> + \lambda \|u\|^2$. Last we observe using \iref{kappau2} that $\kappa > \|u\| \|v\| \geq \<u,v\> = \<u,v_\ve+ \lambda u \> \geq \lambda \|u\|^2$, hence $\lambda \leq \kappa / \|u\|^2 \leq \kappa$.

We have $\|v_\ve\| \geq \|u\|$ by construction, and $\|v_\ve+ \lambda u\| \geq \lambda \|u\|$ for any $\lambda \geq 1$, since $\<u,v_\ve\> \geq 0$. Therefore, recalling \iref{ITUV},
$$
\sum_{T \in A_u} |I_T| \leq \sum_{\ve \in \{1,-1\}} \sum_{0 \leq \lambda\leq \kappa}  \frac \pi {\|u\| \| v_\ve + \lambda u\|} 
\leq  \sum_{0 \leq k \leq \kappa} \frac {2 \pi} {\|u\|^2 \max\{\lambda,1\}} \leq  \frac{2 \pi (2+\ln \kappa)}{\|u\|^2},
$$ 
which concludes the proof of this proposition.
\end{proof}

The following corollary implies the main result of this paper, Theorem \ref{th:TCard}, in the special case of anisotropic euclidean norms.


\begin{corollary}
\label{corol:ThEucl}
There exists a constant $C$ such that for any anisotropic euclidean norm $G$ one has 
$$
\int_0^{2 \pi} \#(\cT(G^\htheta)) \, d\theta\leq C(1+\ln \kappa(G))^2. 
$$
\end{corollary}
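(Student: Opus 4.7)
The plan is to reduce the general anisotropic euclidean norm $G$ to the diagonal model $F$ of Definition \ref{def:FEucl}, and then to sum the per-triangle interval lengths $|I_T|$ provided by Proposition \ref{prop:SumAu}.

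First, set $\kappa := \kappa(G)$ and write $G(u) = \sqrt{u^\trans M u}$ where $M \in S_2^+$ has eigenvalues $\lambda^2 \leq \mu^2$. Since the notion of $F$-acuteness from Definition \ref{def:acute} is invariant under multiplying the norm by a positive constant, the mesh $\cT(G)$ is unchanged if we rescale $M$; we may therefore assume the eigenvalues are $\kappa^{-1}$ and $\kappa$. Diagonalizing, there is $\theta_0 \in \R$ with $M = R_{\theta_0} \diag(\kappa^{-1},\kappa) R_{\theta_0}^\trans$, so that $G = F^{\htheta_0}$ with $F$ as in Definition \ref{def:FEucl}. Using $R_\alpha R_\beta = R_{\alpha+\beta}$, we get $G^\htheta = F^{\theta+\theta_0}$ for every $\theta$, and by $2\pi$-periodicity
\[
\int_0^{2 \pi} \#(\cT(G^\htheta))\, d\theta = \int_0^{2 \pi} \#(\cT(F^\htheta))\, d\theta.
\]

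Next, combine \iref{ineqTE} and \iref{sumElemT} to obtain
\[
\int_0^{2 \pi} \#(\cT(F^\htheta))\, d\theta = 8 \pi + \sum_T |I_T|,
\]
where $T$ ranges over all elementary triangles. Every such $T$ has two non-zero vertices $u,v$; ordering them with $\|u\| \leq \|v\|$, we have $T \in A_u$ in the notation of Proposition \ref{prop:SumAu}. Each triangle is counted at most twice in $\sum_{u} \sum_{T \in A_u} |I_T|$ (the only ambiguity being when $\|u\|=\|v\|$), hence
\[
\sum_T |I_T| \leq \sum_{u\in \Z^2 \setminus \{0\}} \sum_{T \in A_u} |I_T|.
\]
By the second clause of Proposition \ref{prop:SumAu}, only lattice points $u$ with $\|u\| < \sqrt\kappa$ contribute, and for each such $u$ the inner sum is at most $C(1+\ln \kappa)/\|u\|^2$. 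Applying the lattice harmonic sum estimate \iref{sumLog} of Lemma \ref{lemma:angleCard} with $s = \sqrt\kappa$ gives
\[
\sum_T |I_T| \leq C(1+\ln \kappa) \sum_{0 < \|u\| \leq \sqrt\kappa} \frac{1}{\|u\|^2} \leq 8 C (1+\ln \kappa)(1+\tfrac12 \ln \kappa) \leq C' (1+\ln \kappa)^2.
\]
Combining with the $8\pi$ term and the reduction to $F$, we conclude
\[
\int_0^{2 \pi} \#(\cT(G^\htheta))\, d\theta \leq 8 \pi + C'(1+\ln \kappa(G))^2 \leq C''(1+\ln \kappa(G))^2,
\]
as claimed. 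The only step that required any care was the reduction from arbitrary $G$ to the normalized diagonal $F$ via scale invariance of the mesh and absorption of the diagonalizing rotation into the integration variable; the quantitative heart of the argument is the pointwise bound of Proposition \ref{prop:SumAu}, which already provides one $\ln \kappa$ factor, while the second comes from summing $1/\|u\|^2$ over the lattice disk of radius $\sqrt\kappa$.
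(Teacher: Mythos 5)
Your proof is correct and follows essentially the same route as the paper: reduce to the normalized diagonal norm $F$ of Definition \ref{def:FEucl} by scaling and rotation, convert the integral to $\sum_T |I_T|$ via \iref{ineqTE} and \iref{sumElemT}, and then combine the per-vertex bound of Proposition \ref{prop:SumAu} with the lattice sum estimate \iref{sumLog}. Your version is in fact slightly more careful about the additive constant (correctly integrating the $+4$ to $8\pi$) and about the possible double-counting when $\|u\|=\|v\|$, but these are cosmetic differences.
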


\begin{proof}
It is sufficient to prove this result for the specific norm $F$ introduced in Definition \ref{def:FEucl}, since any anisotropic euclidean norm has this form, up to a rotation and a multiplication by a positive scalar.

The sum \iref{sumElemT} of the interval lengths $|I_T|$ associated to all elementary triangles $T$, can be bounded as follows: 
$$
\sum_T |I_T| \leq \sum_{u\in \Z^2} \sum_{T\in A_u} |I_T| \leq C(1+\ln \kappa) \sum_{\substack{u\in \Z^2\sm \{0\}\\ \|u\|\leq \sqrt \kappa}} \frac 1{\|u\|^2} \leq 8C (1+\ln \kappa)^2,
$$
where we used \eqref{sumIT} for the second inequality, and \iref{sumLog} for the last one. 
Combining this estimate with \iref{ineqTE}, we conclude the proof: 
\[
\int_0^{2 \pi} \#(\cT(F^\htheta)) \, d \theta= 4+\int_0^{2 \pi} \#(\cE_\theta) \, d\theta = 4+\sum_T |I_T| \leq 4+8C(1+\ln \kappa)^2.
\qedhere
\]

\end{proof}


\subsection{Symmetric norms}

In this section and the following one, we denote by $\gF$ the collection of asymmetric norms which are continuously differentiable outside of the origin. To each $F \in \gF$ we attach a $2\pi$-periodic map $\vp_F : \R \to ]-\pi/2,\pi/2[$, introduced in the following definition, which encodes the direction of its gradient.
See Figure \ref{fig:tangent} (left) for an illustration and (center) for two examples.

\begin{definition}
\label{def:phiF}
For any $F\in \gF$ and any $\theta\in \R$, let 
$
\vp_F(\theta) := \varangle(e_\theta, \nabla F(e_\theta)).
$
\end{definition}
Since the asymmetric norm $F$ is $1$-homogeneous, we have $\<e_\theta, \nabla F(e_\theta)\> = F(e_\theta)>0$ for all $\theta \in \R$, hence 
\be
\label{PhiFPi2}
\vp_F(\theta) \in ]-\pi/2, \pi/2[.
\ee
The composition of $F$ with a rotation, corresponds to the composition of $\vp_F$ with a translation:
\be
\label{vpFTrans}
\vp_{F^\theta} = \vp_F(\cdot-\theta).
\ee
Note that $\vp_F$ is $\pi$-periodic if $F$ is symmetric, and odd if $F(x,y) = F(x,-y)$ for all $x,y \in \R$.
In the special case of the euclidean norm, $F_0(u) := \|u\|$, we have $\nabla F_0(u) = u/\|u\|$, hence $\vp_{F_0} = 0$ identically on $\R$.

The next proposition establishes the two most noticeable properties of $\vp_F$ aside from its periodicity: its integrals are bounded \iref{intBounded} in terms of the anisotropy ratio $\kappa(F)$, and it obeys a semi-Lipschitz regularity property \iref{rightLipschitz}. 
\begin{figure}
\includegraphics[width=5cm]{\pathPic/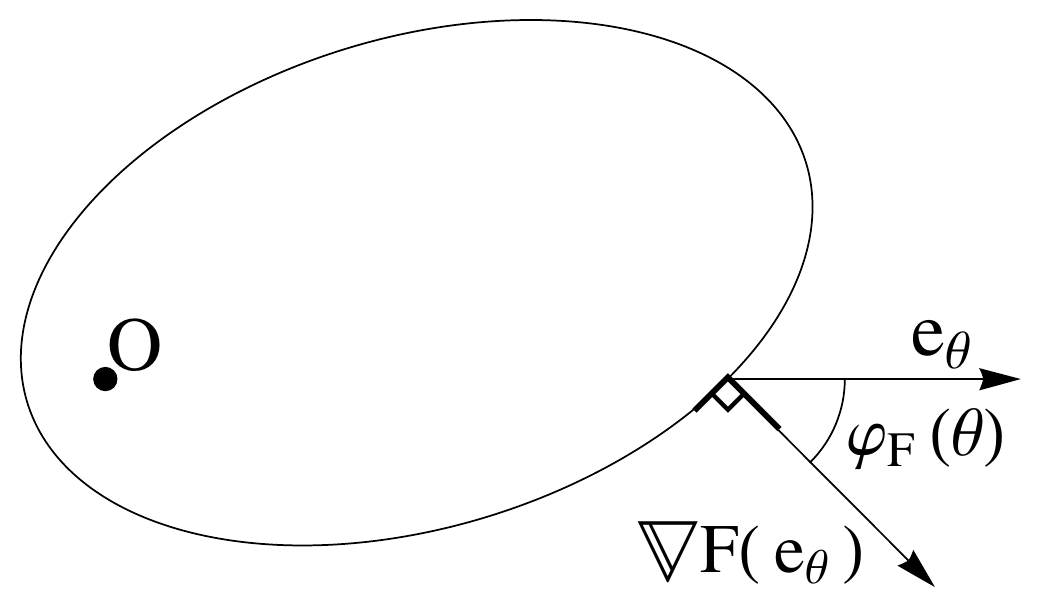}
\hspace{0.1cm}
\includegraphics[width=5cm]{\pathPic/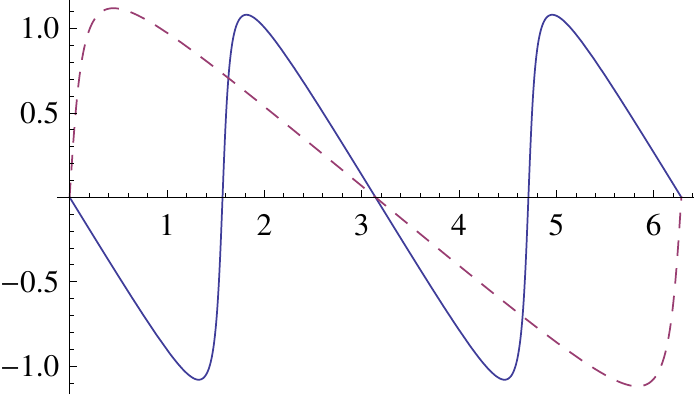}
\hspace{0.1cm}
\includegraphics[width=5.5cm]{\pathPic/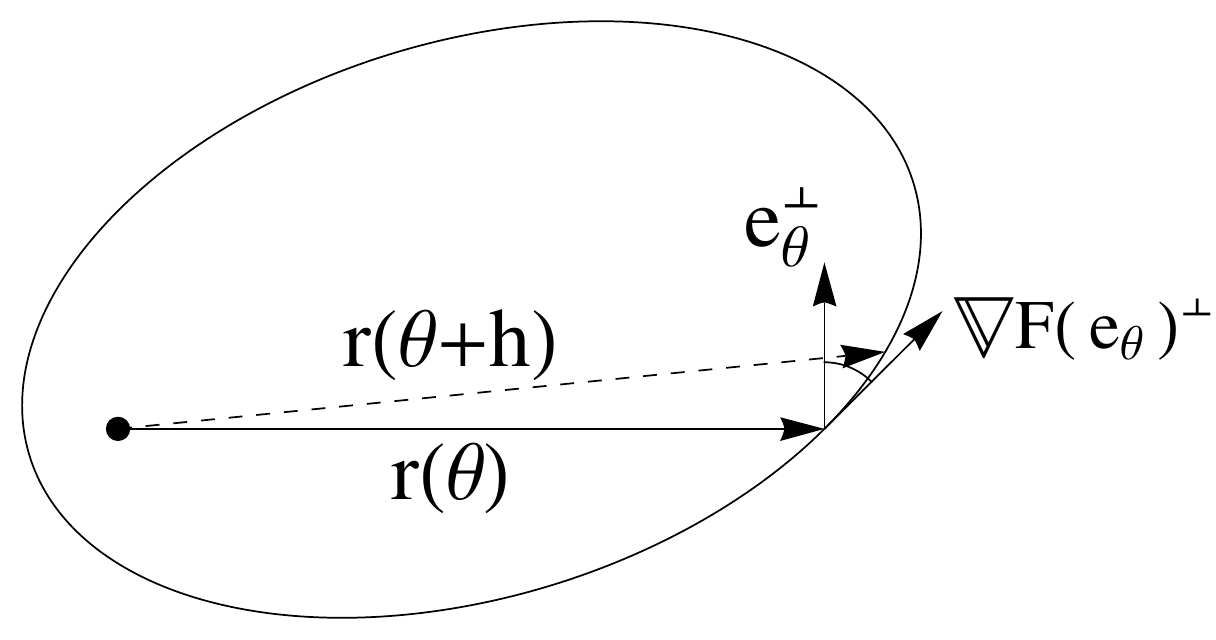}
\caption{
\label{fig:tangent}
Illustration of Definition \ref{def:phiF} (left), $\vp_F(\theta)<0$ in this example. Graph of $\vp_F(\theta)$ (center) for an anisotropic euclidean norm given by a diagonal matrix (plain
), and the asymmetric norm $\sqrt{x^2+y^2}-0.9x$ (dashed). 
Notations of Proposition \ref{prop:phi}.}
\end{figure}

\begin{prop}
\label{prop:phi}
For any $F \in \gF$ and any $\theta\in \R$, one has 
$
\frac {d}{d\theta} \ln F(e_\theta) = \tan \vp_F(\theta).
$
As a result for any $h>0$
\be
\label{intBounded}
\left| \int_\theta^{\theta+h} \tan \vp_F\right| \leq \ln \kappa(F).
\ee
Furthermore $\vp_F$ is right-Lipschitz, and $|\vp_F|$ is bounded strictly away from $\pi/2$: 
\begin{eqnarray}
\label{rightLipschitz}
\vp_F(\theta+h) &\geq& \vp_F(\theta) - h,\\
\label{PhiBounded}
\cos \vp_F(\theta) &\geq& 1/\kappa(F).
\end{eqnarray}
\end{prop}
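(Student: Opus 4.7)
The plan is to dispatch the four assertions in order: the derivative formula by Euler's identity, the integral bound \iref{intBounded} by integrating that formula, the lower bound \iref{PhiBounded} on $\cos \vp_F$ by a dual bound on $\|\nabla F\|$, and finally the right-Lipschitz inequality \iref{rightLipschitz} by a convex-geometric argument. The main obstacle will be the last one.

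Using $e_\theta' = e_{\theta+\pi/2}$ together with the identity $\<e_{\theta+\pi/2}, w\> = \det(e_\theta, w)$, the chain rule gives $\frac{d}{d\theta} F(e_\theta) = \det(e_\theta, \nabla F(e_\theta))$. Since $F$ is $1$-homogeneous and differentiable on $\R^2\sm\{0\}$, Euler's identity yields $\<e_\theta, \nabla F(e_\theta)\> = F(e_\theta) > 0$. Dividing these two relations and inserting them into the angle formulas in \iref{def:angle} produces $\tan \vp_F(\theta) = \det(e_\theta, \nabla F(e_\theta))/\<e_\theta, \nabla F(e_\theta)\> = (\ln F(e_\theta))'$, proving the derivative identity. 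Integrating over $[\theta, \theta+h]$ and observing that $F(e_{\theta+h})/F(e_\theta) \in [\kappa(F)^{-1}, \kappa(F)]$ by definition of the anisotropy ratio immediately yields \iref{intBounded}. For \iref{PhiBounded}, combining the convexity inequality $F(e_\theta+\delta w) \geq F(e_\theta) + \delta \<w, \nabla F(e_\theta)\>$ with the triangle inequality $F(e_\theta+\delta w) \leq F(e_\theta) + \delta F(w)$ (valid for $\delta \geq 0$ by $1$-homogeneity) and letting $\delta \to 0^+$ gives $\<w, \nabla F(e_\theta)\> \leq F(w)$ for every $w$; maximizing over $\|w\|=1$ produces $\|\nabla F(e_\theta)\| \leq \max_{\|w\|=1} F(w) \leq \kappa(F) F(e_\theta)$, and since $\cos \vp_F(\theta) = F(e_\theta)/\|\nabla F(e_\theta)\|$ the bound follows.

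The hard part is the right-Lipschitz property \iref{rightLipschitz}. The key step is to write $\vp_F(\theta) = \psi(\theta) - \theta$, where $\psi : \R \to \R$ is a continuous lift of the angular coordinate of $\nabla F(e_\theta)$; such a lift exists because $\nabla F$ is continuous and, by Euler's identity, never anti-parallel to $e_\theta$, so $\vp_F$ is continuous with values in $(-\pi/2, \pi/2)$ and we may normalize $\psi(\theta) = \theta + \vp_F(\theta)$. For $h \geq 0$ the desired inequality then becomes $\psi(\theta+h) \geq \psi(\theta)$, i.e., $\psi$ is non-decreasing. This is precisely the statement that the Gauss map of the convex planar body $B_F := \{F \leq 1\}$ is order-preserving: by $1$-homogeneity, $e_{\psi(\theta)}$ is the outer unit normal to $\partial B_F$ at the point $u(\theta) := e_\theta/F(e_\theta)$, and $\theta \mapsto u(\theta)$ traverses $\partial B_F$ once counterclockwise as $\theta$ runs through $[0, 2\pi]$ (because $B_F$ is convex with $0$ in its interior). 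Monotonicity of the Gauss map of a convex $C^1$ planar body is a classical fact which I would invoke directly. Alternatively, one can reduce to $F \in C^2$ via the mollified approximations $F_n$ provided by Lemma \ref{lem:cvNorm}: there $\psi_n' \geq 0$ is the nonnegativity of the curvature of $\partial B_{F_n}$, and one passes to the limit using $F_n \to F$ and $\nabla F_n \to \nabla F$ locally uniformly, which yield pointwise convergence $\vp_{F_n} \to \vp_F$, so that the inequality $\psi_n(\theta+h) \geq \psi_n(\theta)$ survives in the limit.
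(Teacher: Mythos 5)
Your proposal is correct. For the derivative identity, the integral bound \iref{intBounded}, and the right-Lipschitz property \iref{rightLipschitz} you follow essentially the paper's route: the paper also writes $\vp_F=\psi-\Id$ with $\psi(\theta)=\theta+\vp_F(\theta)$ and deduces monotonicity of $\psi$ from the convexity of the unit ball $B_F$ (it phrases this as the derivative of the unit tangent being negatively proportional to the normal, i.e.\ $\psi'\geq 0$); your observation that this step needs either the classical monotonicity of the Gauss map of a convex $C^1$ body or a mollification-and-limit argument via Lemma \ref{lem:cvNorm} is a fair point of rigor, since the paper tacitly differentiates $\psi$. Where you genuinely diverge is \iref{PhiBounded}: the paper obtains it \emph{as a consequence} of \iref{intBounded} and \iref{rightLipschitz}, integrating $\tan\vp_F$ over $[\theta,\theta+\vp_F(\theta)]$ and bounding it below by $-\ln\cos\vp_F(\theta)$, whereas you prove it directly from the duality estimate $\<w,\nabla F(e_\theta)\>\leq F(w)$ (convexity plus subadditivity), giving $\|\nabla F(e_\theta)\|\leq\max_{\|w\|=1}F(w)\leq\kappa(F)F(e_\theta)$ and hence $\cos\vp_F(\theta)=F(e_\theta)/\|\nabla F(e_\theta)\|\geq 1/\kappa(F)$. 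Your argument is more elementary and decouples \iref{PhiBounded} from the two preceding estimates; the paper's derivation is slightly shorter once those estimates are in hand but makes \iref{PhiBounded} logically dependent on them. Both are valid.
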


\begin{proof}
Let $r\in C^1(\R,\R_+^*)$ be defined by $r(\theta) := 1/F(e_\theta)$, for all $\theta \in \R$. This quantity is illustrated on Figure \ref{fig:tangent} (right), as well as the vectors $e_\theta^\perp$ and $\nabla F(e_\theta)^\perp$, where $u^\perp$ denotes the rotation by $\pi/2$ of a vector $u\in \R^2$. 
One easily obtains the Taylor development
$$
r(\theta+h) = r(\theta) + h r(\theta) \tan (-\vp_F(\theta))+ o(h),
$$
for any fixed $\theta \in \R$, and for small $h$. In other words r$'(\theta) = - r(\theta) \tan \vp_F(\theta)$, and equivalently $\frac {d}{d\theta} \ln F(e_\theta) = \tan \vp_F(\theta)$. The left hand side of \iref{intBounded} therefore equals $|\ln F(e_\theta) - \ln F(e_{\theta+h})|$, which as announced is bounded by $\ln \kappa(F)$, by definition of the anisotropy ratio \iref{defKappa}.

Let $\psi(\theta) := \theta+\vp_F(\theta)$, for all $\theta \in \R$. By construction, $\nabla F(e_\theta)$ is positively proportional to $e_{\psi(\theta)}$ for all $\theta \in \R$.
The vectors $e_{\psi(\theta)}$ and $e_{\psi(\theta)}^\perp$ are respectively the unit normal and the unit tangent to the set $B_F := \{z\in \R^2; \, F(z) \leq 1\}$, in the direction $e_\theta$. Since $B_F$ is convex, the derivative of the tangent vector $\frac d {d\theta} e_\psi(\theta)^\perp = -\psi'(\theta)e_{\psi(\theta)}$ is negatively proportional to the normal $e_{\psi(\theta)}$. This shows that $\psi'(\theta) \geq 0$, for all $\theta\in \R$, hence that $\psi$ is non-decreasing.
Recalling that $\vp_F(\theta) = \psi(\theta) - \theta$, we conclude that $\vp_F$ is the difference of a non-decreasing function and a $1$-Lipschitz function, which establishes \iref{rightLipschitz}.

For the last inequality, we fix $\theta$ and first assume that $\vp := \vp_F(\theta) \geq 0$. We obtain using \iref{intBounded}
$$
\ln \kappa(F) \geq \int_\theta^{\theta+\vp} \tan \vp_F \geq \int_0^\vp \tan (\vp-u) du = -\ln (\cos \vp),
$$
hence $\cos(\vp) \geq 1/\kappa(F)$, as announced. If $\vp \leq 0$, a similar argument involving the integral on $[\theta + \vp, \theta]$ yields the same estimate, which concludes the proof of this proposition.
\end{proof}

We rephrase in the next lemma a geometrical property, on the gradients of a family of asymmetric norms, into inequalities between the attached functions. 

\begin{lemma}
\label{lem:ineqCone}
Let $F,F_1, \cdots, F_r \in \gF$, and let $u\in \R^2$. The following are equivalent:
\begin{itemize}
\item
There exists 
$\alpha_1, \cdots, \alpha_r\in \R_+$ such that 
\be
\label{posVec}
\nabla F(u) = \sum_{1 \leq i \leq r} \alpha_i \nabla F_i(u).
\ee
\item 
Let $\theta\in \R$ be such that $u$ and $e_\theta$ are positively collinear. Then
\be
\label{PhiBoundPhi}
\min\{ \vp_{F_1}(\theta), \cdots, \vp_{F_r}(\theta)\} \leq \vp_F(\theta) \leq \max\{ \vp_{F_1}(\theta), \cdots, \vp_{F_r}(\theta)\}
\ee
\end{itemize}
\end{lemma}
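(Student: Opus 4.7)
The statement is essentially geometric: condition \eqref{posVec} says that $\nabla F(u)$ lies in the positive (convex) cone generated by the vectors $\nabla F_1(u),\ldots,\nabla F_r(u)$, and I want to read this cone membership off from the angular data encoded by $\vp_F$ and the $\vp_{F_i}$. First, by $1$-homogeneity of the norms we may replace $u$ by $e_\theta$, since $\nabla F(u)=\nabla F(e_\theta)$ and likewise for each $F_i$; so it suffices to prove the equivalence with $u=e_\theta$.

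The key observation is that by \iref{PhiFPi2}, each gradient $\nabla F_i(e_\theta)$ (and $\nabla F(e_\theta)$) lies in the open half-plane $H:=\{v\in\R^2\sm\{0\};\ \langle e_\theta,v\rangle>0\}$, with $\varangle(e_\theta,\nabla F_i(e_\theta))=\vp_{F_i}(\theta)\in(-\pi/2,\pi/2)$. Because all generators lie in a single open half-plane, the cone $\cC:=\{\sum_i\alpha_i\nabla F_i(e_\theta);\ \alpha_i\ge 0\}$ is pointed (contains no line), and its nonzero elements have strictly positive inner product with $e_\theta$. I would next project $\cC\sm\{0\}$ onto the affine line $L:=\{e_\theta+t\,e_\theta^\perp;\ t\in\R\}$ by sending $v\mapsto v/\langle e_\theta,v\rangle$; under this map, the generator $\nabla F_i(e_\theta)$ goes to the point $e_\theta+\tan(\vp_{F_i}(\theta))\,e_\theta^\perp$, and the image of $\cC\sm\{0\}$ is exactly the convex hull of these points.

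Since $\tan$ is strictly increasing on $(-\pi/2,\pi/2)$, this convex hull is the segment corresponding to the interval of parameters $[\min_i\tan\vp_{F_i}(\theta),\max_i\tan\vp_{F_i}(\theta)]$, equivalently to the interval of angles $[\min_i\vp_{F_i}(\theta),\max_i\vp_{F_i}(\theta)]$. Thus
\[
\cC\sm\{0\}=\{w\in H;\ \varangle(e_\theta,w)\in[\min_i\vp_{F_i}(\theta),\max_i\vp_{F_i}(\theta)]\}.
\]
Applying this to $w=\nabla F(e_\theta)\in H$, whose angle with $e_\theta$ is $\vp_F(\theta)$ by Definition~\ref{def:phiF}, yields the equivalence between \iref{posVec} and \iref{PhiBoundPhi}.

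There is no real technical obstacle here; the only point requiring care is checking that the projection onto $L$ really is a bijection between $\cC\sm\{0\}$ (modulo positive scaling) and the convex hull of the projected generators, which amounts to the elementary fact that a nonzero non-negative combination of vectors in $H$ stays in $H$. The converse direction (\iref{PhiBoundPhi}$\Rightarrow$\iref{posVec}) just picks indices $j,k$ with $\vp_{F_j}(\theta)\le\vp_F(\theta)\le\vp_{F_k}(\theta)$ and writes $\tan\vp_F(\theta)$ as a convex combination of $\tan\vp_{F_j}(\theta)$ and $\tan\vp_{F_k}(\theta)$, then lifts this to a non-negative combination of $\nabla F_j(e_\theta)$ and $\nabla F_k(e_\theta)$ producing a positive multiple of $\nabla F(e_\theta)$ (positivity of the multiplier following from $\cos\vp_F(\theta)>0$).
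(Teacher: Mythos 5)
Your proof is correct and is essentially the paper's argument in geometric dress: the projection $v \mapsto v/\langle e_\theta, v\rangle$ onto the affine line is exactly the paper's normalization $v := \lambda \nabla F(u)$ with $\langle u,v\rangle = 1$, and the identification of the projected point's coordinate with $\tan\vp$ reproduces the paper's identity $\tan\vp = \det(u,v)$, after which both proofs reduce the equivalence to writing $\tan\vp_F(\theta)$ as a convex combination of the $\tan\vp_{F_i}(\theta)$ and using the monotonicity of $\tan$ on $]-\pi/2,\pi/2[$.
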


\begin{proof}
Since $F$ is $1$-homogeneous, we have $\nabla F(u) = \nabla F(u/\|u\|) = \nabla F(e_\theta)$ (likewise $\nabla F_i(u) = \nabla F_i(e_\theta)$). 
Let $v := \lambda \nabla F(u)$ (resp.\ $v_i := \lambda_i \nabla F_i(u)$), where the positive scalar $\lambda$ (resp.\ $\lambda_i$) is chosen so that $\<u,v\> = 1$ (resp.\ $\<u,v_i\> = 1$).
We introduce the angles $\vp := \vp_F(\theta)$ (resp. $\vp_i:=\vp_{F_i}(\theta)$), which belong to $]-\pi/2,\pi/2[$, see \iref{PhiFPi2}, and we observe that 
$\tan \vp = \det(u,v)/\<u,v\>=\det(u,v)$ (resp.\ $\tan \vp_i = \det(u,v_i)$).

Proof that \eqref{posVec} $\Rightarrow$ \eqref{PhiBoundPhi}. Assuming \iref{posVec}, and denoting $\beta_i := (\lambda/\lambda_i) \alpha_i \geq 0$, we have $v = \sum_{i=1}^r \beta_i v_i$ and therefore
\begin{gather*}
1 = \< u,v\> = \sum_{1 \leq i \leq r} \beta_i \<u,v_i\> = \sum_{1 \leq i \leq r} \beta_i,\\
\tan \vp = \det (u,v) = \sum_{1 \leq i \leq r} \beta_i \det(u,v_i) = \sum_{1 \leq i \leq r} \beta_i \tan \vp_i.
\end{gather*}
This shows that $\tan \vp$ is a weighted average of the reals $(\tan \vp_i)_{i=1}^r$, which implies \iref{PhiBoundPhi} since the function $\tan$ is increasing on $]-\pi/2, \pi/2[$.

Proof that \eqref{PhiBoundPhi} $\Rightarrow$ \eqref{posVec}. Conversely, if \iref{PhiBoundPhi} holds, we may assume without loss of generality that $\vp_1 \leq \vp \leq \vp_2$. We thus have $\tan \vp_1 \leq \tan \vp \leq \tan \vp_2$, hence there exists barycentric coefficients $\beta_1,\beta_2\in \R_+$, $\beta_1+\beta_2=1$, such that $\tan \vp = \beta_1\tan \vp_1+\beta_2\tan \vp_2$. Setting $\beta_3=\cdots=\beta_r=0$, we thus have $\tan \vp = \sum_{i=1}^r \beta_i \tan \vp_i$.
Defining $V = \sum_{i=1}^r \beta_i v_i$, we obtain proceeding as above $\<u,v\>=1=\<u,V\>$ and $\det(u,v) = \tan \vp = \det(u,V)$, hence $v=V$. Denoting $\alpha_i := (\lambda_i/\lambda) \beta_i$ we obtain $\nabla F(u) = \sum_{i=1}^r \alpha_i \nabla F_u(u)$, which establishes \iref{posVec} and concludes the proof.
\end{proof}

We emphasize the next proposition, which is a central component of our strategy. 
Consider a ``complex'' asymmetric norm $F$, and ``simpler'' norms $(F_i)_{i=1}^r$, say of anisotropic euclidean type.  Assume that $\vp_F$ is bounded in the sense of \iref{minFiF} by the $\vp_{F_i}$. The following result shows that $\#(\cT(F))$ can be estimated in terms of the $\#(\cT(F_i))$, for which efficient bounds were developped in the previous subsection. 



\begin{prop}
\label{prop:IneqPhiIneqCard}
Let $F,F_1, \cdots, F_r \in \gF$ be such that everywhere on $\R$
\be
\label{minFiF}
\min\{ \vp_{F_1}, \cdots, \vp_{F_r}\} \leq \vp_F \leq \max\{ \vp_{F_1}, \cdots, \vp_{F_r}\}.
\ee
Then 
\be
\label{CardTFTFi}
\#(\cT(F)) \leq \sum_{1 \leq i \leq r} \#(\cT(F_i)), \stext{ and }
\int_0^{2 \pi} \#(\cT(F^\htheta))\, d\theta \leq \sum_{1 \leq i \leq r} \int_0^{2\pi} \#(\cT(F_i^\htheta))\, d\theta.
\ee
\end{prop}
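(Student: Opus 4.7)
The plan is to prove the stencil inclusion $\cE(p_F) \subseteq \bigcup_{i=1}^r \cE(p_{F_i})$ via the contrapositive: if $u,v\in\Z^2\setminus\{0\}$ form an $F_i$-acute angle for every $i$, then they form an $F$-acute angle. The rest is bookkeeping using $\#(\cT(p)) = 4+\#(\cE(p))$ from \eqref{upperTP}, together with the translation property \eqref{vpFTrans} for rotated norms.

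First I would fix two vectors $u,v \in \R^2 \setminus \{0\}$ and let $\theta_u, \theta_v$ be angles so that $e_{\theta_u}$, $e_{\theta_v}$ are positively collinear with $u,v$ respectively. Applying Lemma \ref{lem:ineqCone} at the vector $v$ (the hypothesis \eqref{PhiBoundPhi} at $\theta_v$ holds by assumption \eqref{minFiF}), there exist nonnegative scalars $\alpha_1, \dots, \alpha_r$ with
\[
\nabla F(v) = \sum_{i=1}^r \alpha_i \nabla F_i(v).
\]
Similarly, there exist nonnegative $\beta_1, \dots, \beta_r$ with $\nabla F(u) = \sum_i \beta_i \nabla F_i(u)$. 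Now suppose $u,v$ form an $F_i$-acute angle for each $i$; by Point 1 of Lemma \ref{lem:acuteCriterion}, $\langle u, \nabla F_i(v)\rangle \geq 0$ and $\langle v, \nabla F_i(u)\rangle \geq 0$ for all $i$. Taking the corresponding weighted sums, both $\langle u, \nabla F(v)\rangle$ and $\langle v, \nabla F(u)\rangle$ are nonnegative, so $u,v$ form an $F$-acute angle, again by Lemma \ref{lem:acuteCriterion}.

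The contrapositive yields $\cE(p_F) \subseteq \bigcup_i \cE(p_{F_i})$, hence $\#(\cE(p_F)) \leq \sum_i \#(\cE(p_{F_i}))$. Combining with \eqref{upperTP},
\[
\#(\cT(F)) = 4 + \#(\cE(p_F)) \leq 4 + \sum_{i=1}^r \#(\cE(p_{F_i})) \leq \sum_{i=1}^r \#(\cT(F_i)),
\]
the last inequality since $r \geq 1$ and $\#(\cT(F_i)) \geq 4$. For the rotated version, I would observe that by \eqref{vpFTrans}, $\vp_{F^\htheta} = \vp_F(\cdot-\theta)$ and $\vp_{F_i^\htheta} = \vp_{F_i}(\cdot - \theta)$, so the hypothesis \eqref{minFiF} is preserved by simultaneous rotation. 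Applying the first inequality of \eqref{CardTFTFi} to the family $\{F^\htheta, F_1^\htheta, \ldots, F_r^\htheta\}$ and integrating over $\theta \in [0, 2\pi]$ yields the second claim.

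The main (and essentially only) obstacle is the conceptual step linking the scalar hypothesis on the angular maps $\vp_{F_i}$ to the geometric cone condition on gradients, which is precisely the content of Lemma \ref{lem:ineqCone}. Once that bridge is crossed, the nonnegativity of the coefficients $\alpha_i, \beta_i$ transfers acuteness from the $F_i$ to $F$ by linearity, and the counting inequality follows directly from \eqref{upperTP}. Note that the argument requires $F \in \gF$ (so that $\nabla F$ makes sense) and may be extended to general asymmetric norms later via the approximation result of Lemma \ref{lem:cvNorm} together with Lemma \ref{lem:limCard}.
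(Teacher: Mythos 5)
Your proposal is correct and follows essentially the same route as the paper: the key step in both is to use Lemma \ref{lem:ineqCone} to convert the pointwise bounds on $\vp_F$ into a nonnegative-combination representation of $\nabla F$ in terms of the $\nabla F_i$, then transfer acuteness via Lemma \ref{lem:acuteCriterion}. The only cosmetic difference is that you do the counting directly on the sets $\cE(p)$ via \eqref{upperTP}, whereas the paper phrases it as the ASC implication $p_{F_1}\wedge\cdots\wedge p_{F_r}\Rightarrow p_F$ combined with Points 2 and 3 of Proposition \ref{prop:pCard} — the same bookkeeping.
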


\begin{proof}
We begin with the proof of an intermediate result: if \iref{minFiF} holds, then we have the implication of ASCs 
\be
\label{PFiPF}
p_{F_1} \wedge\cdots\wedge p_{F_r} \Ra p_F.
\ee
Indeed let $T$ be an elementary triangle, of non-zero vertices $u,v$. Let $\theta_u, \theta_v \in \R$ be such that $e_{\theta_u}, e_{\theta_v}$ are respectively positively proportional to $u,v$.
Assume that $(p_{F_1} \wedge \cdots \wedge p_{F_r})(T)$ holds, which means that $u,v$ form an $F_i$-acute angle for all $1 \leq i \leq r$. Using Lemma \ref{lem:acuteCriterion} we obtain that $\<u,\nabla F_i(v)\>\geq 0$ and $\<v,\nabla F_i(u)\>\geq 0$, for all $1 \leq i \leq r$. Using \iref{minFiF} and Lemma \ref{lem:ineqCone} we find that $\nabla F(u)$ is a linear sum with non-negative coefficients of the vectors $\nabla F_i(u)$, $1 \leq i \leq r$, hence $\<v,\nabla F(u)\>\geq 0$. Likewise $\<u,\nabla F(v)\> \geq 0$. Using again Lemma \ref{lem:acuteCriterion} we obtain that $u,v$ form an $F$-acute angle, hence $p_F(T)$ holds. This concludes the proof of \iref{PFiPF}.

The left part of \iref{CardTFTFi} immediately follows from \iref{PFiPF} and Point 3 of Proposition \ref{prop:pCard}.
Due to the translation invariance \iref{vpFTrans}, inequality \iref{minFiF} is equivalent to $\min\{ \vp_{F_1^\htheta}, \cdots, \vp_{F_r^\htheta}\} \leq \vp_{F^\theta} \leq \max\{ \vp_{F_1^\theta}, \cdots, \vp_{F_r^\theta}\}$ for any $\theta \in \R$, and thus implies $\#(\cT(F^\theta)) \leq \sum_{1 \leq i \leq r} \#(\cT(F_i^\theta))$. Integrating over $[0,2\pi]$ we obtain the right part of \iref{CardTFTFi}, which concludes the proof.
\end{proof}

The next technical lemma describes the periodic function attached to an anisotropic euclidean norm. This is a prerequisite if one wants to construct a well chosen family $(F_i)_{i=1}^r$ of such norms which satisfies \iref{minFiF}, given an asymmetric norm $F$ of interest.

\begin{lemma}
\label{lem:angleMatrix}
\begin{itemize}
\item
The anisotropic euclidean norm $F$ defined by the diagonal matrix of entries $(\kappa^{-1}, \kappa)$, where $\kappa \geq 1$, satisfies $\kappa(F) = \kappa$.
The function $\vp_F$ attains its maximum at $\theta_\kappa := \arctan(\kappa^{-1})$, which is 
$$ 
\vp_F(\theta_\kappa) = \arctan\left[ \left(\kappa - \kappa^{-1}\right)/2 \right].
$$
\item
There exists a finite number anisotropic euclidean norms $F_1, \cdots, F_r$, 
such that on $\R$
$$
\min\{ \vp_{F_1},\cdots,\vp_{F_r}\} \leq -\pi/4 \stext{ and } \pi/4 \leq \max\{ \vp_{F_1}, \cdots, \vp_{F_r}\}.
$$
\end{itemize}
\end{lemma}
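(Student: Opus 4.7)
For the first point, the anisotropy ratio is immediate: on the unit circle, $F(e_0)=\kappa^{-1/2}$ is the minimum and $F(e_{\pi/2})=\kappa^{1/2}$ the maximum, so $\kappa(F)=\kappa$ by \iref{defKappa}. To compute $\vp_F$, note that for $M:=\diag(\kappa^{-1},\kappa)$ one has $\nabla F(u)=Mu/F(u)$, so $\nabla F(e_\theta)$ is positively collinear to $Me_\theta = (\kappa^{-1}\cos\theta,\kappa\sin\theta)$. Using the formulas for $\cos$ and $\sin$ of $\varangle$ in \iref{def:angle}, I would obtain
$$
\tan\vp_F(\theta) \;=\; \frac{\det(e_\theta,Me_\theta)}{\<e_\theta,Me_\theta\>} \;=\; \frac{(\kappa-\kappa^{-1})\sin\theta\cos\theta}{\kappa^{-1}\cos^2\theta+\kappa\sin^2\theta}.
$$
Substituting $t=\tan\theta$ reduces the right-hand side to $(\kappa-\kappa^{-1})t/(\kappa^{-1}+\kappa t^2)$. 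Differentiating in $t$ and setting the numerator to zero gives the unique positive critical point $t=1/\kappa$, i.e.\ $\theta=\arctan(\kappa^{-1})=\theta_\kappa$, at which the value is $(\kappa-\kappa^{-1})/2$, yielding the claimed $\vp_F(\theta_\kappa)$. That this critical point gives the \emph{global} maximum follows because $\tan\vp_F$ is $\pi$-periodic, odd, has the sign of $\sin\theta\cos\theta$, vanishes only on $\tfrac{\pi}{2}\Z$, and admits $\theta_\kappa$ as its only positive critical point in $]{-}\pi/2,\pi/2[$.

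For the second point, I would fix any $\kappa>1+\sqrt{2}$ so that $(\kappa-\kappa^{-1})/2>1$, and hence, by the formula just derived, $\vp_F(\theta_\kappa)>\pi/4$. By continuity of $\vp_F$, the set $U_+:=\{\theta : \vp_F(\theta)\geq \pi/4\}$ contains an open neighborhood of $\theta_\kappa$; by oddness of $\vp_F$ (since $F$ is invariant under $(x,y)\mapsto(x,-y)$), the set $U_-:=\{\theta : \vp_F(\theta)\leq -\pi/4\}$ contains an open neighborhood of $-\theta_\kappa$. Using the translation identity \iref{vpFTrans}, $\vp_{F^\alpha}(\theta)=\vp_F(\theta-\alpha)$, hence $\vp_{F^\alpha}\geq \pi/4$ on $\alpha+U_+$ and $\vp_{F^\alpha}\leq -\pi/4$ on $\alpha+U_-$. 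The circle $\R/2\pi\Z$ is compact and $U_\pm$ are nonempty open sets, so finitely many translates of each cover the circle. I would pick rotations $\alpha_1,\dots,\alpha_r$ so that both $\bigcup_j(\alpha_j+U_+)$ and $\bigcup_j(\alpha_j+U_-)$ cover $\R/2\pi\Z$ (taking, if needed, the union of two such finite lists), and set $F_i:=F^{\alpha_i}$.

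The only substantive calculation is the explicit one in Part 1; the main thing to watch is keeping the sign of $\det(e_\theta,Me_\theta)$ correct and verifying that the interior critical point is the global, not merely local, maximum. Once Part 1 delivers a concrete norm whose $\vp_F$ strictly overshoots $\pi/4$ at some angle, Part 2 is a routine compactness/covering argument, and I do not anticipate any real obstacle.
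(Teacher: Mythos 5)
Your proof is correct. Part 1 is essentially the paper's own computation (the paper maximizes $\tan\vp_F(\theta)=(\kappa-\kappa^{-1})/\bigl((\kappa\tan\theta)^{-1}+\kappa\tan\theta\bigr)$ by the inequality $x+x^{-1}\geq 2$ rather than by differentiating in $t=\tan\theta$, but this is the same argument in substance). Part 2 is where you genuinely diverge: the paper picks a \emph{quantitative} $\kappa$ (e.g.\ $\kappa=13$) so that $\vp_F(\theta_\kappa)\geq \pi/4+\pi/5$, then invokes the semi-Lipschitz property \iref{rightLipschitz} to guarantee $\vp_F\geq\pi/4$ on the whole interval $[\theta_\kappa,\theta_\kappa+\pi/5]$ (and $\leq-\pi/4$ on the mirror interval, by oddness and $\pi$-periodicity), which lets it cover $\R$ with exactly $r=5$ rotations by multiples of $\pi/5$. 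You instead only need $\vp_F(\theta_\kappa)>\pi/4$ strictly (any $\kappa>1+\sqrt 2$), and replace the explicit interval by a soft compactness/covering argument on the circle via \iref{vpFTrans}. Both are valid: the statement asks only for finiteness of $r$, and in the downstream use (Corollary \ref{corol:CardSym}) $r$ is absorbed into the constant $C$, so your non-explicit $r$ costs nothing; the paper's route buys an explicit, small, universal $r$ at the price of invoking \iref{rightLipschitz}. One cosmetic remark: your $U_\pm$ as defined are closed rather than open, but since each contains a nonempty open neighborhood of $\pm\theta_\kappa$ the compactness argument goes through unchanged.
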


\begin{proof}
First point.
Let $D$ be the diagonal matrix of entries $(1/\kappa, \kappa)$. We have $F(u)^2 = \<u, D u\>$ for any $u \in \R^2$, hence $F(u) \nabla F(u) = Du$ for any $u\neq 0$, by differentiation.
It follows that $F(e_\theta) \nabla F(e_\theta) =  \left(\kappa^{-1} \cos \theta , \kappa \sin \theta\right)$, for each $\theta \in \R$, and therefore
$$
\tan \vp_F(\theta) = \frac{\det(e_\theta,\nabla F(e_\theta))}{\<e_\theta, \nabla F(e_\theta)\>} = \frac { (\kappa - 1/\kappa) \cos\theta \sin \theta}{\kappa^{-1}\cos^2 \theta + \kappa \sin^2 \theta} = \frac { \kappa-1/\kappa} {(\kappa \tan \theta)^{-1} + \kappa \tan \theta},
$$
where the right hand side equals $0$ by convention if $\theta$ is a multiple of $\pi/2$. The maximum value of this right hand side is attained when its denominator is positive and minimal, that is when $\kappa \tan \theta = 1$. 
Thus the maximum value of $\tan \vp_F$ is $(\kappa-1/\kappa)/2$, attained at $\theta_\kappa$, as announced. 

Second point. Let $\kappa$ be sufficiently large ($\kappa=13$ is fine) in such way that 
\begin{equation*}
\arctan\left[ \left(\kappa - \kappa^{-1}\right)/2 \right] \geq \pi/4+ \pi/5. 
\end{equation*}
Let $F$ be the norm associated to the diagonal matrix of entries $(1/\kappa,\kappa)$.
Since $F$ is symmetric, the function $\vp_F$ is $\pi$-periodic. Since $F$ is defined by a diagonal matrix we have $F(x,y) = F(x,-y)$ for all $x,y\in \R$, and therefore $F$ is odd. Furthermore for all $\theta \in [\theta_\kappa, \theta_\kappa+\pi/5]$ we have $\vp_F(\theta) \geq \vp_F(\theta_\kappa) - \pi/5 \geq \pi/4$, using \iref{rightLipschitz}. Finally for all $n \in \Z$
$$
\vp_F \geq \pi/4 \text{ on }  [n \pi + \theta_\kappa, \, n \pi + \theta_\kappa+\pi/5], \stext{ and } \vp_F \leq -\pi/4 \text{ on }  [n \pi - \theta_\kappa-\pi/5,\, n \pi - \theta_\kappa].
$$
We choose $r:=5$, and introduce the anisotropic euclidean norms $F_k := F^{k\pi/5}$, for $1 \leq k \leq 5$. Using the translation invariance \iref{vpFTrans} we see that the sets on which $\vp_{F_i} \geq \pi/4$ (resp.\ $\vp_{F_i} \leq -\pi/4$), $1 \leq i \leq r$, contain intervals which cover the whole line $\R$. This concludes the proof. 
\end{proof}

For any asymmetric norm $F$ and any $A \in \GL_2$, we denote by $F \circ A$ the asymmetric norm defined by 
$$
F\circ A(u) := F(Au).
$$
Clearly $F \circ A$ is symmetric (resp. is an element of $\cF$, resp. is of anisotropic euclidean type) if and only is that is the case for $F$. We denote $\kappa(A) := \|A\| \|A^{-1}\|$, and point out that $\|Au\|/\|Av\|\leq \kappa(A)$ for any $u,v\in \R^2$ such that $\|u\|=\|v\|=1$. It easily follows that 
\be
\label{ineqKappaFA}
\max\left\{\frac{\kappa(F)}{\kappa(A)}, \frac{\kappa(A)}{\kappa(F)}\right\} \leq \kappa(F\circ A) \leq \kappa(F) \kappa(A).
\ee
Choosing $A=R_\theta^\trans$, for some $\theta \in \R$, we recover in particular that $\kappa(F^\htheta) = \kappa(F)$, for all $\theta\in \R$. 

We establish in the next corollary the average case and the worst case estimates for $\#(\cT(F))$, where $F$ is an arbitrary \emph{symmetric} norm, which were announced in Theorem \ref{th:TCard} and Proposition \ref{prop:WorstCase} respectively. 

\begin{corollary}
\label{corol:CardSym}
\begin{itemize}
\item
For each symmetric norm $G$ there exists $A \in \GL_2$ such that $F := G \circ A$ satisfies $\kappa(F) \leq \sqrt 2$. If $G \in \gF$, we therefore have on $\R$
\be
\label{ineqPhiFA}
-\pi/4 \leq \vp_F \leq \pi/4
\ee
\item
There exists a constant $C$ such that for any symmetric norm $G$ one has 
\be
\label{ThSym}
\#(\cT(G)) \leq C \kappa(G), \stext{ and } \int_0^{2\pi} \#(\cT(G^\htheta)) \, d\theta \leq C (1+\ln \kappa(G))^2.
\ee
\end{itemize}
\end{corollary}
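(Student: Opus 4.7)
The plan for the first bullet is a direct application of John's ellipsoid theorem in dimension two, which states that any centrally symmetric convex body $K\subset\R^2$ contains an ellipsoid $E$ with $K\subset \sqrt{2}\,E$. I will apply this to $K:=\{G\leq 1\}$, which is symmetric since $G$ is, and write the John ellipsoid as $E=A\,B_1$ with $A\in\GL_2$ and $B_1$ the Euclidean unit ball. The inclusions $B_1\subset A^{-1}K\subset \sqrt{2}\,B_1$ translate into $\|u\|/\sqrt{2}\leq F(u)\leq \|u\|$ for all $u\in\R^2$, where $F:=G\circ A$, and hence $\kappa(F)\leq \sqrt{2}$. When moreover $G\in\gF$, the norm $F$ is likewise smooth away from the origin, so $\vp_F$ is well defined; then \eqref{PhiBounded} gives $\cos\vp_F\geq 1/\sqrt{2}$, and combined with $|\vp_F|<\pi/2$ from \eqref{PhiFPi2} this yields $|\vp_F|\leq \pi/4$ everywhere on $\R$, which is \eqref{ineqPhiFA}.

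For the second bullet, I will first establish the estimates under the extra assumption $G\in\gF$, and recover the general case at the end by the smoothing argument of Lemma \ref{lem:cvNorm} combined with Lemma \ref{lem:limCard}, noting that the mollified approximants remain symmetric and satisfy $\kappa(G_n)\leq \kappa(G)$. Assume then $G\in \gF$, let $A$ and $F=G\circ A$ be as produced above, and let $F_1,\ldots,F_r$ be the fixed family of anisotropic Euclidean norms supplied by Lemma \ref{lem:angleMatrix}, satisfying $\min_i \vp_{F_i}\leq -\pi/4$ and $\max_i\vp_{F_i}\geq \pi/4$ on $\R$. Combined with the bound $|\vp_F|\leq \pi/4$ from the first bullet, this gives the sandwich hypothesis \eqref{minFiF} for $F$ and the family $(F_i)$.

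Next I will transfer this sandwich to $G$ by pullback. Set $G_i:=F_i\circ A^{-1}$, which are again anisotropic Euclidean. Differentiating $G=F\circ A^{-1}$ and $G_i=F_i\circ A^{-1}$ yields $\nabla G(u)=A^{-\trans}\nabla F(A^{-1}u)$ and likewise for $G_i$, so any nonnegative combination $\nabla F(v)=\sum_i\alpha_i\nabla F_i(v)$ at $v=A^{-1}u$ pulls back to the same relation among $\nabla G(u)$ and the $\nabla G_i(u)$. Now Lemma \ref{lem:ineqCone} \emph{equates} this gradient-cone condition with the pointwise bracketing of the attached $\vp$-functions; applying it in one direction to $F,(F_i)$ and in the other direction to $G,(G_i)$, I obtain $\min_i\vp_{G_i}\leq \vp_G\leq \max_i\vp_{G_i}$ on $\R$. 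Proposition \ref{prop:IneqPhiIneqCard} then yields $\#(\cT(G))\leq\sum_i\#(\cT(G_i))$ and the analogous estimate for the angular integrals.

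To conclude, I will estimate the anisotropy of each $G_i$. From \eqref{ineqKappaFA} applied to $G=F\circ A^{-1}$ one gets $\kappa(A)\leq \kappa(F)\kappa(G)\leq \sqrt{2}\,\kappa(G)$, whence $\kappa(G_i)\leq \kappa(F_i)\kappa(A)\leq C\,\kappa(G)$ with $C$ depending only on the fixed family $(F_i)$. Lemma \ref{lem:eigenvec} then gives $\#(\cT(G_i))\leq 6+2\kappa(G_i)\leq C'\kappa(G)$, while Corollary \ref{corol:ThEucl} gives $\int_0^{2\pi}\#(\cT(G_i^\htheta))\,d\theta\leq C(1+\ln\kappa(G))^2$. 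Summing over the finite index set $\{1,\ldots,r\}$ delivers the two inequalities of \eqref{ThSym}, and the smoothing reduction transfers them to arbitrary symmetric $G$. The most delicate step is the pullback: although the chain rule itself is trivial, it is essential that Lemma \ref{lem:ineqCone} be an equivalence, so that the angle-bracketing naturally produced by the John-ellipsoid step can be converted into a cone condition, pulled back by $A^{-1}$, and then converted back into the angle-bracketing required by Proposition \ref{prop:IneqPhiIneqCard}.
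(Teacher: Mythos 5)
Your proposal is correct and follows essentially the same route as the paper: John's ellipsoid to normalize $\kappa(F)\leq\sqrt2$ and deduce $|\vp_F|\leq\pi/4$ via \eqref{PhiBounded}, then the fixed family from Lemma \ref{lem:angleMatrix}, the two-way use of Lemma \ref{lem:ineqCone} to pull the bracketing back through $A^{-1}$, Proposition \ref{prop:IneqPhiIneqCard}, and the $\kappa(G_i)\lesssim\kappa(G)$ estimate from \eqref{ineqKappaFA}. No gaps.
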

\begin{proof}
First point. A classical theorem by John states that for any $d$-dimensional convex set $B$ which is \emph{symmetric} with respect to the origin, there exists an ellipsoid $E$ centered at the origin and such that $E \subset B \subset E \sqrt d$. 
Applying this result to $B := \{u\in \R^2; \, G(u) \leq 1\}$, we obtain an ellipsoid $E$ that can be written under the form $E := \{A u ;\,  \|u\|\leq 1\}$, for some $A \in \GL_2$. It easily follows by homogeneity that, for all $u \in \R^2$
$$
\|u\|/\sqrt 2 \leq G(A u) = F(u) \leq \|u\|.
$$
Hence $\kappa(F) \leq \sqrt 2$ as announced.
If $G \in \gF$, then  $\vp_F$ is well defined and we have $\cos (\vp_{F}(\theta)) \leq 1/\kappa(F) \leq 1/\sqrt 2$, for all $\theta \in \R$, using Proposition \ref{prop:phi}. Finally $|\vp_F(\theta)| \leq \arccos(1/\sqrt 2) = \pi/4$. 

Second point, keeping the same notations. 
In view of Lemmas \ref{lem:cvNorm} and \ref{lem:limCard}, we can assume that $G\in \gF$.
Let $F_1, \cdots, F_r$ be as in the second point of Lemma \ref{lem:angleMatrix}, and let $G_i := F_i \circ A^{-1}$, for $1 \leq i \leq r$.
We have by construction, identically on $\R$
$$
\min\{ \vp_{F_1}, \cdots, \vp_{F_r}\} \leq -\pi/4 \leq \vp_F \leq \pi/4 \leq \max\{ \vp_{F_1}, \cdots, \vp_{F_r}\}.
$$ 
Hence for each $u\in \R^2\sm\{0\}$ there exists, using Lemma \ref{lem:ineqCone}, non-negative coefficients $\alpha_1, \cdots, \alpha_r$ such that:
\be
\label{AGAu}
A^\trans \nabla G (A u) = \nabla F(u)
= \sum_{1 \leq i \leq r} \alpha_i \nabla F_i(u)
= A^\trans \left(\sum_{1 \leq i \leq r} \alpha_i  G_i(Au)\right).
\ee
It follows that $\nabla G(v)$ is a linear combination with non-negative coefficients of the $\nabla G_i(v)$, $1 \leq i \leq r$, for any $v \in \R^2 \sm \{0\}$ (choose $u=A^{-1} v$ in \iref{AGAu}). Using again Lemma \ref{lem:ineqCone} we conclude that $\min\{\vp_{G_1}, \cdots , \vp_{G_r}\} \leq \vp_G \leq \max \{\vp_{G_1},\cdots, \vp_{G_r}\}$ on $\R$. 

Hence 
\begin{gather}
\label{SumG}
\#(\cT(G)) \leq \sum_{1 \leq i \leq r} \#(\cT(G_i)) \leq C \sum_{1 \leq i \leq r} \kappa(G_i),\\
\label{intSumG}
\int_0^{2\pi} \#(\cT(G^\htheta)) \, d\theta \leq \sum_{1 \leq i \leq r} \int_0^{2\pi} \#(\cT(G^\htheta_i)) \leq C \sum_{1 \leq i \leq r} (1+\ln \kappa(G_i))^2,
\end{gather}
where we used Proposition \ref{prop:IneqPhiIneqCard} for the first inequality, of both lines, and for the second inequality Lemma \ref{lem:eigenvec} in the first line, and Corollary \ref{corol:ThEucl} in the second line. 

On the other hand we obtain using \iref{ineqKappaFA}, with $\kappa(A):= \|A\| \|A^{-1}\|$,
$$
\kappa(G_i) = \kappa(F_i \circ A) \leq \kappa(F_i) \kappa(A) 
\stext{ and } \kappa(G) \geq \frac{\kappa(A)}{\kappa(G\circ A)} = \frac{\kappa(A)}{\kappa(F)} \geq \frac{\kappa(A)}{\sqrt 2},
$$
thus $\kappa(G_i) \leq \sqrt 2 \,\kappa(F_i) \, \kappa(G)$. Combining this inequality with \iref{SumG} and \iref{intSumG}, we conclude the proof of \iref{ThSym}. 
\end{proof}

\subsection{Asymmetric norms}

We estimate in this section the average cardinality of $\#(\cT(F^\htheta))$, $\theta \in [0,2 \pi]$, for an arbitrary asymmetric norm $F$ on $\R^2$. 
Our strategy is similar to the case of symmetric norms, presented in the previous subsection: we construct a family $F_1, \cdots, F_r$ of anisotropic euclidean norms such that $\min\{ \vp_{F_1}, \cdots, \vp_{F_r}\} \leq \vp_F \leq \max\{ \vp_{F_1}, \cdots, \vp_{F_r}\}$, and we use Proposition \ref{prop:IneqPhiIneqCard}.

The construction the $(F_i)_{i=1}^r$ is however more subtle than in the previous section, and the integer $r$ grows logarithmically with $\kappa(F)$.
The following technical lemma, illustrated on Figure \ref{fig:broken}, is our first step.

\begin{lemma}
\label{lem:seq}
Let $0< \ve \leq \pi/6$, and let $\Phi\in C^0([0,2 \pi], ]0 ,\pi[)$ be such that 
(i) $\Phi \geq 2 \ve$, 
(ii) $\Phi-\Id$
is non-increasing, 
and (iii) for all $0 \leq t \leq t' \leq 2 \pi$ one has 
\be
\label{intcotan}
\int_t^{t'} \cotan \Phi(s) \, ds \leq |\ln \ve|.
\ee
Let $(t_n)_{0 \leq n \leq N}$ be the finite sequence of elements of $[0,2\pi]$ recursively defined as follows: $t_0 := 0$, and $t_{n+1}$ is the largest solution in $[0, 2 \pi]$ of 
\be
\label{deftn}
\Phi(t)= \ve +(t- t_n)
\ee
if one exists. Otherwise the sequence ends.

Then $N \leq C |\ln \ve|$, for some absolue constant $C$ (independent of $\ve$ and $\Phi$).
\end{lemma}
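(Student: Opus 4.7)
The plan is to charge each step of the recurrence against a positive portion of the integral budget $\int\cot\Phi\leq|\ln\varepsilon|$, with a residual class of ``strong shrinks'' handled by a log-multiplicative telescoping argument. Set $\delta_n := t_{n+1}-t_n$ and $\phi_n := \Phi(t_n)$; the recurrence \eqref{deftn} yields $\phi_{n+1}=\varepsilon+\delta_n$, so hypothesis (i) forces $\delta_n\geq\varepsilon$ and (ii) gives the monotone upper bound $\Phi(s)\leq \phi_n+(s-t_n)$ on $[t_n,t_{n+1}]$. Since $\sum\delta_n\leq 2\pi$, at most $O(1)$ indices are \emph{exceptional} in the sense that $\delta_{n-1}+\delta_n\geq \pi/6$, and I discard them; the remaining \emph{nice} indices satisfy $\phi_n+\delta_n<\pi/3$.

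For a nice index, the upper bound on $\Phi$ and monotonicity of $\cot$ on $(0,\pi)$ give
\[
\int_{t_n}^{t_{n+1}}\cot\Phi(s)\,ds \;\geq\; \ln\sin(\phi_n+\delta_n)-\ln\sin\phi_n \;\geq\; \ln\!\Bigl(1+\tfrac{\delta_n}{2\phi_n}\Bigr),
\]
where the second inequality is the elementary bound $\sin(a+b)/\sin a\geq 1+b/(2a)$ valid on $0<a\leq a+b\leq\pi/3$ (mean value applied to $\sin$, combined with $\cos(a+b)\geq 1/2$ and $\sin a\leq a$). The integrand $\cot\Phi$ is positive on each nice sub-interval (since $\Phi<\pi/2$ there), and these sub-intervals form $O(1)$ maximal runs separated by the $O(1)$ exceptional intervals; applying (iii) to each run yields
\[
\sum_{\text{nice}}\ln\!\Bigl(1+\tfrac{\delta_n}{2\phi_n}\Bigr)\;\leq\;C|\ln\varepsilon|.
\]

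Classify nice indices according to $r_n:=\delta_n/\delta_{n-1}$ as \emph{grow} ($r_n\geq 1$), \emph{mild shrink} ($r_n\in[1/2,1)$), or \emph{strong shrink} ($r_n<1/2$). Since $\delta_{n-1}\geq\varepsilon$ gives $\phi_n\leq 2\delta_{n-1}$, one has $\delta_n/\phi_n\geq 1/2$ for grow and $\geq 1/4$ for mild shrink, so each such step contributes at least an absolute positive constant to the sum above, bounding their combined count by $O(|\ln\varepsilon|)$.

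The per-step estimate degenerates for strong shrinks, and for these I invoke the telescoping identity
\[
\sum_{n=1}^{N-1}\log r_n \;=\; \log(\delta_{N-1}/\delta_0) \;\in\; \bigl[-\log(\pi/\varepsilon),\,\log(\pi/\varepsilon)\bigr],
\]
equivalently $\sum_{\text{shrink}}|\log r_n|\leq\sum_{\text{grow}}\log r_n+\log(\pi/\varepsilon)$. For a grow step with $r_n\geq 4$ the main estimate sharpens to $\int_{t_n}^{t_{n+1}}\cot\Phi\geq\log(r_n/4)$, yielding $\sum_{\text{grow}}\log r_n=O(|\ln\varepsilon|)$; consequently $\sum_{\text{shrink}}|\log r_n|=O(|\ln\varepsilon|)$, and since each strong shrink has $|\log r_n|\geq\log 2$ their count is $O(|\ln\varepsilon|)$ too, giving $N\leq C|\ln\varepsilon|$. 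The principal obstacle is precisely this strong-shrink class, where a jump from large $\delta_{n-1}$ to small $\delta_n$ may contribute arbitrarily little to $\int\cot\Phi$; the telescoping identity is the device that pays for them against the (bounded) total log-growth of $\delta$ accumulated by the grow steps.
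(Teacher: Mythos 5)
Your proof is correct and rests on the same two pillars as the paper's own argument: the $O(1)$ exceptional set of indices with $\delta_{n-1}+\delta_n>\pi/6$, and the lower bound $\int_{t_n}^{t_{n+1}}\cotan\Phi\geq\ln\bigl(\sin(\phi_n+\delta_n)/\sin\phi_n\bigr)$ combined with a telescoping device to pay for shrinking steps --- the paper telescopes $\ln\sin\phi_n$ inside a single quantity $e_n:=\ln(\sin\phi_n/\sin\phi_{n+1})+2\int_{t_n}^{t_{n+1}}\cotan\Phi$ bounded below by a uniform positive constant, whereas you telescope $\ln\delta_n$ through an explicit grow/mild-shrink/strong-shrink case split, which is the same idea in a less compact packaging (note $\phi_{n+1}=\varepsilon+\delta_n$ is comparable to $\delta_n$, so the two telescoped quantities are equivalent). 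The only loose end is that your identity $\sum_n\log r_n=\log(\delta_{N-1}/\delta_0)$ runs over \emph{all} indices, including the exceptional ones excluded from your classification; since there are only $O(1)$ of these and each satisfies $|\log r_n|\leq\log(2\pi/\varepsilon)$, they add only $O(|\ln\varepsilon|)$ to the budget and the argument goes through.
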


\begin{figure}
\includegraphics[width=7cm]{\pathPic/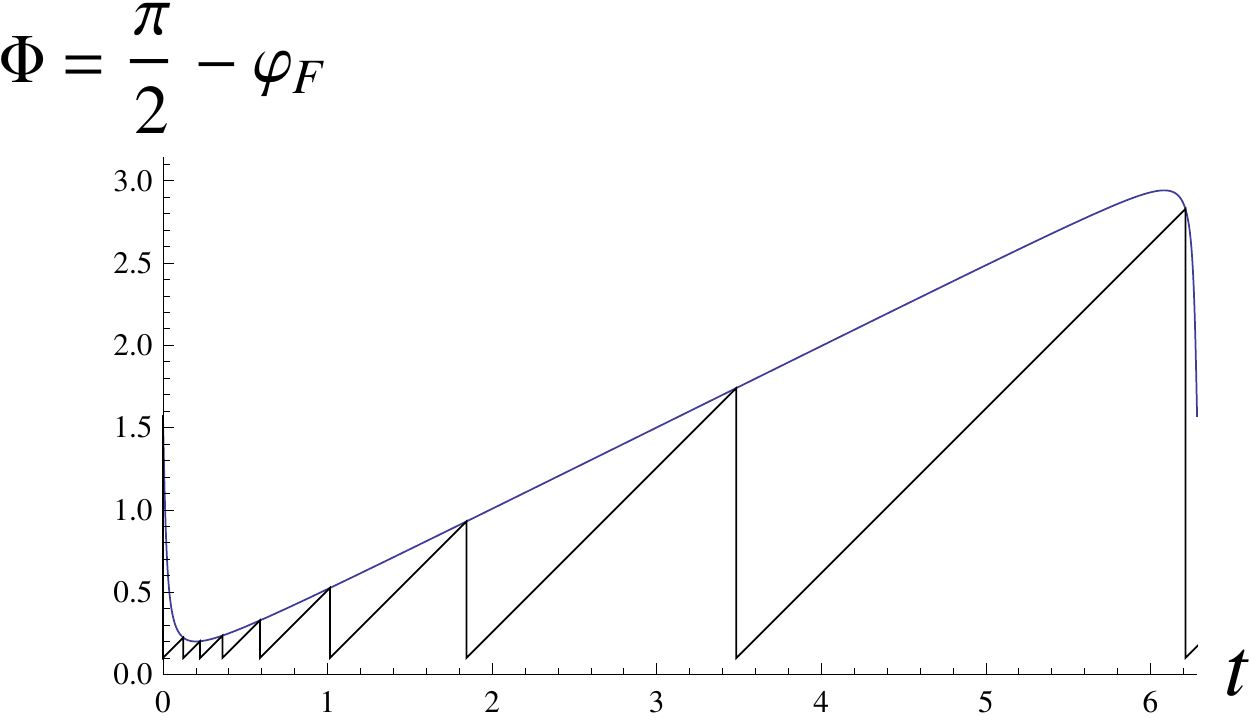}
\hspace{1cm}
\includegraphics[width=7cm]{\pathPic/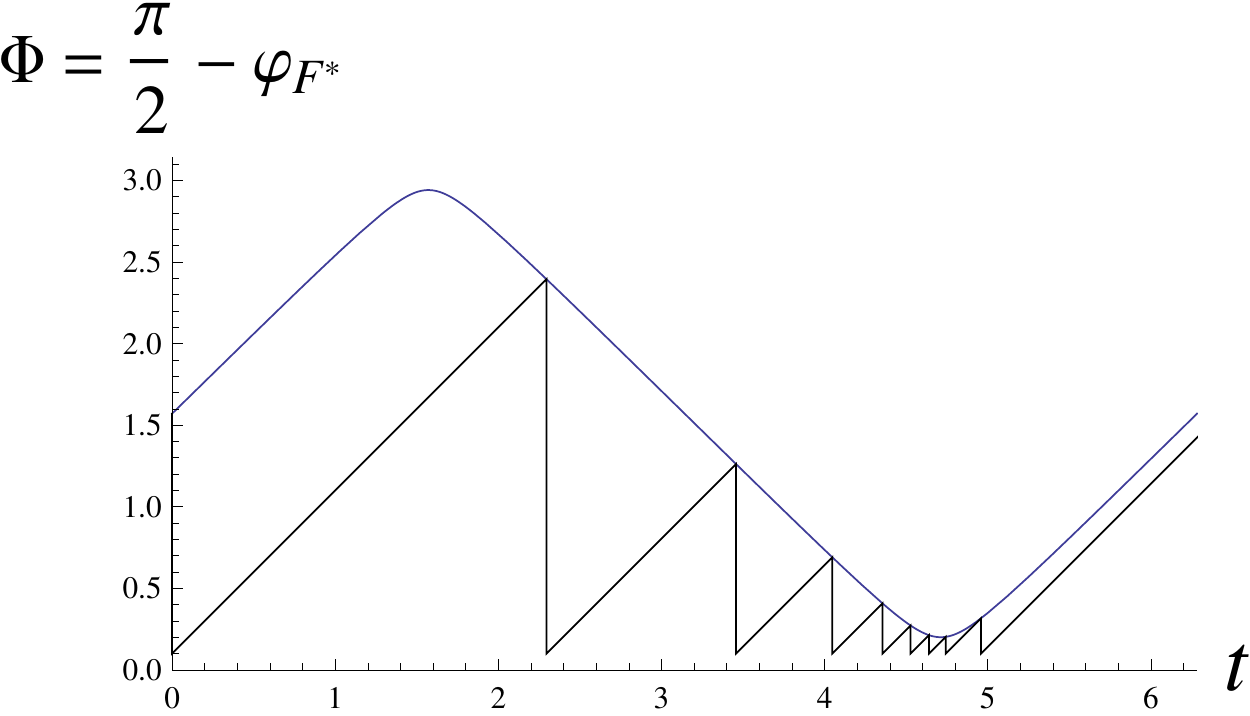}
\caption{
\label{fig:broken}
Illustration of Lemma \ref{lem:seq}. 
Curve : $\Phi$.
Broken line : construction of $t_0, \cdots, t_N$. The intersections of the broken line with the curve correspond to the equations \iref{deftn}. There is one vertical segment at each abscissa $t_n$, $0 \leq n \leq N$.
The chosen $\Phi$ is $\pi/2-\vp_F$, or $\pi/2-\vp_{F^*}$, consistently with Proposition \ref{prop:AsymFi}, with $F(x,y) = \sqrt{x^2+y^2}-0.98\,  x$. 
}
\end{figure}

\begin{proof}
Since $t \mapsto \Phi(t) - t$ is continuous and non-increasing, the solutions of \iref{deftn} are either the empty set, a singleton, or a closed interval.
We denote $\phi_n := \Phi(t_n)$ for all $0 \leq n \leq N$, and observe that $\phi_n \geq 2 \ve$ by hypothesis (ii). We also define for $0\leq n < N$
$$
\delta_n := t_{n+1}-t_n = \phi_{n+1}-\ve \geq 2 \ve -\ve = \ve.
$$
In particular $t_n \geq n\ve$ for $0\leq n \leq N$, and therefore $N \leq 2 \pi / \ve$: the sequence is finite as announced. 
We establish below the finer estimate $N \leq C |\ln \ve|$. 

Consider the two collections of integers
$$
E :=\{2, \cdots, N-1\}, \qquad E_+ := \{n \in E; \, \delta_{n-1} + \delta_n \leq \pi/6\}.
$$
Note that 
$$
(\pi/6) \#(E \sm E_+) \leq \sum_{n \in E \sm E_+} \left(\delta_{n-1}+\delta_n\right) \leq 2 \sum_{0\leq n <N} \delta_n = 2 (t_N-t_0) \leq 4 \pi,
$$
and therefore $\#(E\sm E_+) \leq c_0 := 24$.
We next estimate the cardinality of $E_+$. Consider an arbitrary $n \in E_+$,
we obtain recalling that $\Phi-\Id$ is non-increasing 
$$
\int_{t_n}^{t_{n+1}} \cotan \Phi(t) \, dt \geq \int_0^{\delta_n} \cotan (\phi_n+t) \, dt = \ln \left(\frac{\sin (\phi_n+\delta_n)}{\sin \phi_n}\right).
$$
We next define and estimate a quantity $e_n$, attached to each $n \in E_+$
\be
\label{defEn}
e_n := \ln \left(\frac{\sin (\phi_n)}{\sin(\phi_{n+1})}\right) + 2\int_{t_n}^{t_{n+1}} \cotan \Phi \geq \ln \left(\frac{\sin(\phi_n+\delta_n)^2}{\sin(\phi_n) \sin(\phi_{n+1})}\right).
\ee 
We have $\sin (\phi_n) \leq \phi_n$, and $\sin (\phi_{n+1}) \leq \phi_{n+1} = \ve+\delta_n \leq 2 \delta_n$, since $\delta_n \geq \ve$.
On the other hand $\sin(x) \geq \rho x$ for all $x \in [0,\pi/3]$, by concavity of the sine function on this interval, with $\rho := \sin(\pi/3)/(\pi/3)$.
Observing that $\phi_n+\delta_n= \ve+\delta_{n-1}+\delta_n \leq \pi/6+\pi/6=\pi/3$, we thus obtain  $\sin(\phi_n+\delta_n) \geq \rho (\phi_n+\delta_n)$.
Injecting these inequalities in the right hand side of \iref{defEn} we obtain: 
$$
\exp(e_n) \geq   \frac{ \rho^2(\phi_n+\delta_n)^2}{\phi_n(2 \delta_n)} = \frac{\rho^2} 2 \left(\frac{\phi_n}{\delta_n}+\frac{\delta_n}{\phi_n}\right)^2 \geq 2 \rho^2 =1.36\ldots >1,
$$
where for the second inequality we used that $x+x^{-1} \geq 2$ for all $x>0$.

Let $n, n+1, \cdots, n+k-1$ be consecutive integers in $E_+$. Then 
\begin{eqnarray*}
k \ln (2 \rho^2) &\leq& \sum_{i=n}^{n+k-1} e_i  \\
&=& \ln \left(\frac{\sin \phi_n}{\sin \phi_{n+k}}\right) + 2 \int_{t_n}^{t_{n+k}} \cotan \Phi\\
&\leq& \ln \left(\frac{\sin(\pi/2)}{\sin (2\ve)}\right)+2 |\ln \ve| \\ 
&\leq& 3 |\ln \ve | 
\end{eqnarray*}
where we used in the third line the hypotheses (i) and (iii). 
Regarding the last line, we have by concavity $\sin (2 \ve) \geq (2/\pi) \ve \geq \ve$, thus $-\ln \sin (2 \ve) \leq |\ln \ve|$.
The maximal number $k$ of consecutive integers in $E_+$ is therefore bounded by $c_1 |\ln \ve|$, where $c_1 := 3/\ln (2 \rho^2)$.

The set $E_+$ can be arranged into at most $\#(E\sm E_+)+1$ series of consecutive elements, and  $\#(E \sm E_+) \leq c_0 := 24$, see above. By the previous argument, these series have length at most $c_1 |\ln \ve|$. 
Finally
$$
N = 2+\#(E\sm E_+) + \#(E_+) \leq 2+c_0+(c_0+1) c_1 |\ln \ve|, 
$$
which concludes the proof of this lemma.
%
%
%
\end{proof}

\begin{proposition}
\label{prop:AsymFi}
There exists a constant $C$ such that the following holds.
Let $F\in \gF$, and let $G$ be the norm defined by the diagonal matrix of entries $(\kappa^{-1},\kappa)$, where $\kappa := 4 \kappa(F)+1$. 

Then there exists $r \leq C(1+\ln \kappa(F))$ and $\theta_1, \cdots, \theta_r \in \R$ such that denoting $\vp_i :=  \vp_G(\cdot-\theta_i)$ one has 
\be 
\label{ineq:asym}
\min \{\vp_1, \cdots, \vp_r\} \leq \vp_F \leq \max\{\vp_1, \cdots, \vp_r\}.
\ee
\end{proposition}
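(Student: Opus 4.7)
The plan is to apply Lemma \ref{lem:seq} twice, once for the upper bound of $\vp_F$ and once for the lower bound, with $\ve := 1/(2\kappa(F))$ and the breakpoints used to position translates of $\vp_G$. The central observation is that $\vp_G$ achieves a very high peak at $\theta_\kappa = \arctan(\kappa^{-1})$, namely $\arctan((\kappa-\kappa^{-1})/2)$, and by the choice $\kappa = 4\kappa(F)+1$ one has
$$
\tfrac{\pi}{2} - \vp_G(\theta_\kappa) \;=\; \arctan\!\left(\tfrac{2}{\kappa-\kappa^{-1}}\right) \;\leq\; \tfrac{2}{\kappa-\kappa^{-1}} \;\leq\; \tfrac{1}{2\kappa(F)} \;=\; \ve,
$$
i.e.\ the peak of $\vp_G$ reaches within distance $\ve$ of $\pi/2$. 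Symmetrically, since $G$ is defined by a diagonal matrix, $\vp_G$ is odd and its minimum is $-\vp_G(\theta_\kappa)$, reached at $-\theta_\kappa$.

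For the upper bound, set $\Phi := \pi/2 - \vp_F$. I would verify the three hypotheses of Lemma \ref{lem:seq}: the bound $\Phi\geq 2\ve$ follows from $\cos\vp_F\geq 1/\kappa(F)$ via \eqref{PhiBounded}, which gives $\Phi\geq \arcsin(1/\kappa(F))\geq 1/\kappa(F) = 2\ve$; the monotonicity of $\Phi-\Id$ is just a rewriting of the right-Lipschitz property \eqref{rightLipschitz}; and the integral bound $\int_t^{t'}\cotan\Phi = \int_t^{t'}\tan\vp_F \leq \ln\kappa(F) \leq |\ln\ve|$ is precisely \eqref{intBounded}. Lemma \ref{lem:seq} then yields breakpoints $t_0<\dots<t_N$ on $[0,2\pi]$ with $N\leq C(1+\ln\kappa(F))$ and the key inequality $\Phi(t)\geq \ve + (t-t_n)$ for $t\in[t_n,t_{n+1}]$.

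For each $n$, set $\theta_n := t_n - \theta_\kappa$ and $\vp_n := \vp_G(\cdot - \theta_n)$, so that $\vp_n$ peaks at $t_n$. Using the right-Lipschitz property \eqref{rightLipschitz} applied to $\vp_G$, we get $\vp_n(t)\geq \vp_G(\theta_\kappa) - (t-t_n)$ on $[t_n,t_{n+1}]$, and combining with the upper bound on $\vp_F$ coming from $\Phi(t)\geq \ve+(t-t_n)$ we obtain
$$
\vp_n(t) - \vp_F(t) \;\geq\; \vp_G(\theta_\kappa) - \tfrac{\pi}{2} + \ve \;\geq\; 0
$$
by the choice of $\kappa$. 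Thus $\max_n \vp_n \geq \vp_F$ on $\bigcup_n [t_n,t_{n+1}]$; a small boundary fix (either extending the sequence by at most two extra breakpoints, or exploiting $2\pi$-periodicity of both $\vp_F$ and the $\vp_n$) covers $[0,t_0)\cup[t_N,2\pi]$, for a total of $r_1 = N+O(1) \leq C(1+\ln\kappa(F))$ translates realising the upper half of \eqref{ineq:asym}.

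For the lower bound, the naive candidate $\tilde\Phi := \pi/2+\vp_F$ fails the monotonicity hypothesis of Lemma \ref{lem:seq}, since that would require a left-Lipschitz estimate on $\vp_F$ which is not available. The fix is to reflect: one checks that $\tilde\vp(t):=-\vp_F(-t)$ is itself $\vp_{\tilde F}$ for the asymmetric norm $\tilde F(x,y):=F(x,-y)$, inherits right-Lipschitzness, and has the same anisotropy ratio; applying the upper-bound construction to $\tilde F$ and transporting the resulting translates back via $t\mapsto -t$ produces translates of $\vp_G$ (now placed so as to align their \emph{minima}, at $-\theta_\kappa$, with the reflected breakpoints) whose lower envelope lies below $\vp_F$. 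Concatenating both families gives $r\leq C(1+\ln\kappa(F))$ as required. The main obstacle is purely technical: keeping track of the reflections and of the boundary intervals $[0,t_0),[t_N,2\pi]$, and verifying that the constants in Lemma \ref{lem:seq} combine cleanly with the inequality $\pi/2-\vp_G(\theta_\kappa)\leq \ve$ that dictates the choice $\kappa = 4\kappa(F)+1$.
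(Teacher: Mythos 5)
Your proposal is correct and takes essentially the same route as the paper: the same application of Lemma \ref{lem:seq} to $\Phi=\pi/2-\vp_F$ with $\ve=1/(2\kappa(F))$, the same placement of translates of $\vp_G$ so that the peak value $\vp_G(\arctan(\kappa^{-1}))\geq \pi/2-\ve$ together with the right-Lipschitz bound \iref{rightLipschitz} dominates $\vp_F$ on each $[t_n,t_{n+1}]$, and the same reflection $t\mapsto -t$ for the lower envelope (the paper phrases it as $\hat\Phi(t):=\pi/2+\vp_F(-t)$ rather than through the auxiliary norm $F(x,-y)$, which is equivalent).
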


\begin{proof}
We define $\Phi \in C^0([0,2\pi], [0,\pi])$ by $\Phi(t) := \pi/2-\vp_F(t)$. The difference $\Phi-\Id$ is non-increasing since $\vp_F+\Id$ is non-decreasing, see  Proposition \ref{prop:phi}.
For all $t \in [0,2\pi]$ we have $|\vp_F(t)| \leq \arccos(1/\kappa(F))$, using again Proposition \ref{prop:phi}, hence $\Phi(t) \geq \arcsin(1/\kappa(F)) \geq 1/\kappa(F)$. Last 
for $0 \leq t \leq t' \leq 2 \pi$, one obtains using \iref{intBounded} 
$$
\int_t^{t'} \cotan \Phi = \int_t^{t'} \tan \vp_F 
\leq \ln \kappa(F). 
$$

Therefore $\Phi$ satisfies the assumptions of Lemma \ref{lem:seq}, with $\ve  := \min\{\pi/6,\, 1/(2 \kappa(F))\}$.
Let $t_0, \cdots, t_N$ be the finite sequence given by this Lemma. We have $N \leq C |\ln \ve| \leq C' (1+\ln \kappa(F))$ for some absolute constant $C'$.
Using \iref{deftn} and the fact that $\Phi-\Id$ is non-decreasing, we obtain $\Phi(t) \geq \ve + (t-t_n)$ for all $0 \leq n \leq N$, and all $t_n \leq t \leq t_{n+1}$, and therefore 
$$
\vp_F(t) \leq \pi/2 -\ve - (t-t_n).
$$
We next observe, using Lemma \ref{lem:angleMatrix} and with $\Theta := \arctan(1/\kappa)$, that 
$$
\vp_G(\Theta) = \arctan\left(\frac{\kappa-\kappa^{-1}} 2\right) \geq \frac \pi 2 - \frac 2 {\kappa-\kappa^{-1}} \geq \frac \pi 2 - \ve,
$$
where we used successively that $\arctan(x) \geq \pi/2-1/x$ for all $x>0$, and that $\kappa-\kappa^{-1} \geq (4 \kappa(F)+1) - 1 \geq 4 \kappa(F) \geq 2/\ve$. Therefore $\vp_G(\Theta+t) \geq \pi/2-\ve-t$, for all $t \geq 0$, using \iref{rightLipschitz}.
Denoting $\theta_i := \Theta- t_i$ for all $0 \leq i \leq N$, and $\vp_i := \vp_G(\cdot-\theta_i)$, we conclude that 
$$
\vp_F \leq \max\{\vp_0, \cdots, \vp_N\},
$$
on $[0,2\pi]$, hence also on $\R$ by $2\pi$-periodicity.
We have obtained one side of the announced inequality \iref{ineq:asym}.

For the other side we define $\hat \Phi(t) := \pi/2+\vp_F(-t)$, obtain $\hat t_0, \cdots, \hat t_{\hat N}$ using Lemma \ref{lem:seq}, with again $\hat N \leq C' (1+ \ln \kappa(F))$. Setting $\hat \theta_i := -\Theta + \hat t_i$ and $\hat \vp_i := \vp_G(\cdot-\hat \theta_i)$, for $0 \leq i \leq \hat N$, we obtain likewise $\vp_F \geq \min \{\hat \vp_1, \cdots, \hat \vp_{\hat N}\}$. 
This concludes the proof with $r=N+\hat N$.
\end{proof}

We conclude in the next corollary the proof of main result of this paper: the average estimate of $\#(\cT(F^\htheta))$, $\theta \in [0,2 \pi]$, for an arbitrary asymmetric norm $F$. 

\begin{corollary}
\label{corol:CardAsym}
There exists a constant $C$ such that for any asymmetric norm $F$ on $\R^2$, one has
\be
\label{ln3}
\int_0^{2 \pi} \#(\cT(F^\htheta)) \, d\theta \leq C (1+\ln \kappa(F))^3.
\ee
\end{corollary}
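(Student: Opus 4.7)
The plan is to assemble the pieces already developed: Proposition \ref{prop:AsymFi} supplies a family of anisotropic euclidean norms that angularly bracket $\vp_F$, Proposition \ref{prop:IneqPhiIneqCard} converts such a bracketing into a sum of mesh-cardinality estimates, and Corollary \ref{corol:ThEucl} controls each summand in the euclidean case. The remaining work is essentially bookkeeping on the parameters.

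First I would reduce to the smooth case. Both Proposition \ref{prop:AsymFi} and Proposition \ref{prop:IneqPhiIneqCard} are stated for $F \in \gF$, so I approximate an arbitrary asymmetric norm $F$ by a sequence $(F_n)_{n \geq 1} \subset \gF$ via Lemma \ref{lem:cvNorm}, which preserves $\kappa(F_n) \leq \kappa(F)$, and then invoke the lower-semicontinuity estimate \iref{lim:intCardT} of Lemma \ref{lem:limCard}. Thus it suffices to prove \eqref{ln3} for $F \in \gF$.

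Next, set $\kappa := 4\kappa(F)+1$ and let $G$ be the anisotropic euclidean norm associated to the diagonal matrix of entries $(\kappa^{-1},\kappa)$, as in Proposition \ref{prop:AsymFi}. This proposition furnishes angles $\theta_1,\dots,\theta_r$ with $r \leq C(1+\ln \kappa(F))$ such that the rotated norms $F_i := G^{\htheta_i}$ satisfy $\min_i \vp_{F_i} \leq \vp_F \leq \max_i \vp_{F_i}$ on $\R$. Proposition \ref{prop:IneqPhiIneqCard} then yields
\begin{equation*}
\int_0^{2\pi} \#(\cT(F^\htheta))\, d\theta \leq \sum_{i=1}^r \int_0^{2\pi} \#(\cT(F_i^\htheta))\, d\theta.
\end{equation*}

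Finally, each $F_i$ is an anisotropic euclidean norm, and $\kappa(F_i) = \kappa(G) = \kappa = 4\kappa(F)+1$ since rotations preserve the anisotropy ratio. Corollary \ref{corol:ThEucl} therefore bounds each summand by $C(1+\ln \kappa(F_i))^2 \leq C'(1+\ln \kappa(F))^2$. Multiplying by the number $r \leq C(1+\ln \kappa(F))$ of terms produces the announced bound $C''(1+\ln \kappa(F))^3$. No genuine obstacle remains at this stage; the only thing to watch is that $\ln(4\kappa(F)+1)$ is indeed absorbed into a constant times $1+\ln \kappa(F)$, which is clear since $\kappa(F) \geq 1$.
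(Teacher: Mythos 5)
Your proposal is correct and follows essentially the same route as the paper: reduce to $F\in\gF$ via Lemmas \ref{lem:cvNorm} and \ref{lem:limCard}, bracket $\vp_F$ by rotated copies of the diagonal euclidean norm $G$ using Proposition \ref{prop:AsymFi}, transfer to cardinalities with Proposition \ref{prop:IneqPhiIneqCard}, and bound each of the $r \leq C(1+\ln\kappa(F))$ summands by $C(1+\ln\kappa)^2$. The only cosmetic difference is that you invoke Corollary \ref{corol:ThEucl} directly for the anisotropic euclidean norms, whereas the paper cites Corollary \ref{corol:CardSym}; both apply to $G$ and give the same bound.
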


\begin{proof}
In view of Lemmas \ref{lem:cvNorm} and \ref{lem:limCard}, we can assume that $F\in \gF$.
Let $\kappa$, $G$, $r$ and $\theta_1, \cdots, \theta_r$ be as in Proposition \ref{prop:AsymFi}.
Applying successively Proposition \ref{prop:IneqPhiIneqCard} and Corollary \ref{corol:CardSym} we obtain 
\begin{eqnarray*}
\int_0^{2\pi} \#(\cT(F^\htheta)) \, d \theta &\leq& \sum_{1 \leq i \leq r} \int_0^{2 \pi} \#(\cT(G^{\htheta_i+\theta})) \, d\theta\\
&\leq &  \sum_{1 \leq i \leq r} C (1+\ln \kappa(G))^2\\
&=&  r \, C (1+2\ln \kappa)^2.
\end{eqnarray*}
Recalling that $r \leq C' (1+ \ln \kappa(F))$ and $\kappa = 4\kappa(F)+1$, see Proposition \ref{prop:AsymFi}, we obtain the announced result.
\end{proof}

\section{Implementation and Numerical results}
\label{sec:num}

We compare in this section the algorithm introduced in this paper, FM-ASR, with two alternative solvers of the Escape Time problem, or Anisotropic Eikonal Equation, which enjoy a reputation of efficiency an simplicity in applications \cite{BC10}: the Adaptive Gauss Siedel Iteration (AGSI\footnote{With stopping criterion tolerance parameter $10^{-8}$, as  suggested in \cite{BR06}.%
}) of Bornemann and Rasch \cite{BR06}, and Fast Marching using the $8$ point stencil (FM-$8$). Stencils are illustrated on Figure \ref{fig:classical}.
Two more recent methods were also implemented: the Monotone Acceptance Ordered Upwind Method (MAOUM) of Alton and Mitchell \cite{AltonMitchell12} is tested when its memory usage allows it, and Fast Marching using Lattice Basis Reduction (FM-LBR) of the author \cite{M12} in the special case of Riemannian metrics.

Four test cases are considered: two involving (asymmetric) Finsler metrics, and two involving Riemannian metrics. 
Three of these tests are directly motivated by applications, including motion planning, seismic imaging, and image segmentation, while the fourth one has the advantage of having an analytic solution, avoiding the recourse to a reference solution. Depending on the test, the metric anisotropy $\kappa(\cF)$ ranges from $4$ to $400$. Each algorithm was executed on each test case, at $100$ different resolutions $n\times n$, where $n$ ranged from $61$ to $1201$ (odd values of $n$ are preferred for symmetry reasons). 
We compare the algorithm's efficiency by representing, on Figures \ref{fig:offsetted}, \ref{fig:spiraling}, \ref{fig:Vlad} and \ref{fig:Cohen}, their $L^\infty$ or averaged $L^1$ numerical error, with respect to an exact or reference solution\footnote{%
Reference solutions were computed on a $5001\times 5001$ grid, using the AGSI in the first and third test cases, and the FM-LBR in the last (the second test case has an analytic solution). They were extended to the continuous domain via bilinear interpolation.
}, as a function of CPU time. We also discuss accuracy \emph{resolution-wise} in the text: which resolution $n$ is required to meet a prescribed $L^\infty$ error bound ?

In practical applications \cite{BC10}, the choice of the FM-$8$ versus the AGSI is typically regarded as a compromise in favor, respectively, of CPU time or of numerical accuracy. Indeed the FM-$8$ is a single pass solver with a small stencil, which thus completes in short and predictable CPU time, almost independent of the problem solved. 
For instance the FM-$8$ completes our four benchmarks in CPU time\footnote{%
All timings in seconds, obtained on a $2.4$Ghz Core 2 Duo, using a single core. System memory: 8 GB.} 
$0.77s$, $0.79s$, $0.86s$ and $0.79s$ respectively,
on a $601 \times 601$ grid, while the AGSI takes\footnote{%
In the trivial case of a constant metric, equal to the euclidean norm, the AGSI takes $0.67s$, on the same grid.
}
 $8.61s$, $285s$, $15.1s$ and $123s$. 
On the other hand the results produced by the AGSI are known to converge towards the viscosity solution of the continuous problem, as one refines the discretization grid, whereas convergence can only be guaranteed in limited cases for the FM-$8$. Indeed the acuteness condition (iii) in Definition \ref{def:acute} can be guaranteed for a Finsler metric $\cF$ such that $\kappa(\cF) \leq \sqrt 2$ (using \iref{kappaFAngle}), a Riemannian metric such that $\kappa(\cF) \leq \sqrt 2+1$ (using Proposition 1.2 in \cite{M12}), or axis aligned anisotropy \cite{AM08}. If this condition is violated, then there is no convergence guarantee.
Our objective is to bring together the best of both worlds: our algorithm is fast\footnote{CPU time for the FM-ASR, and the FM-LBR, includes stencil construction, which often accounts for 50\%.
}
($1.39s$, $3.09s$, $1.32s$ and $1.11s$
with the above settings) and universally convergent. 

The MAOUM \cite{AltonMitchell12} and the FM-LBR \cite{M12} are more recent algorithms than the AGSI or the FM-8, and are closer to FM-ASR from a theoretical point of view: they are universally convergent, inspired by Dijkstra's algorithm, and use static stencils assembled in a pre-processing step. However the MAOUM uses large isotropic stencils, see Figure \ref{fig:classical}, instead of smaller anisotropic stencils for the FM-ASR, resulting in a larger complexity and memory footprint.
The FM-LBR mainly distinguishes itself from the FM-ASR by its domain of application: it is restricted to Riemannian metrics, but extends to dimension $3$.
\\

\begin{figure}
\includegraphics[width=4cm]{\pathPic/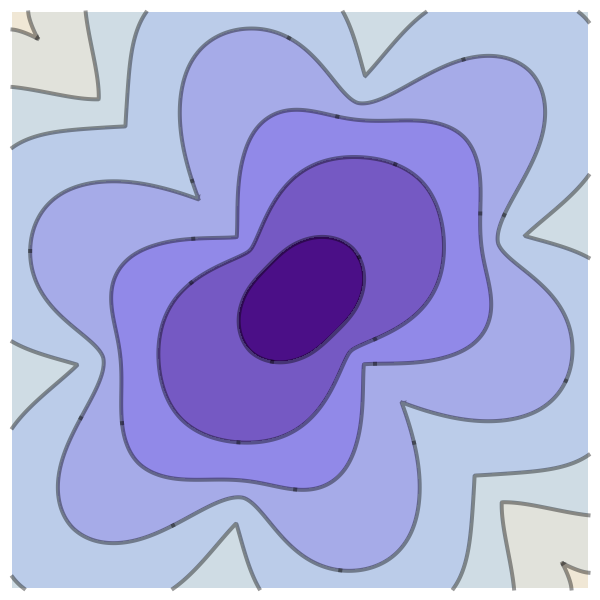}
\includegraphics[width=5.8cm]{\pathPic/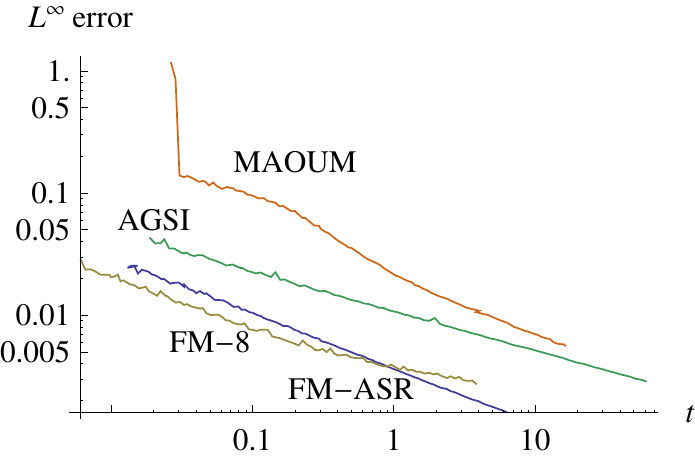}
\includegraphics[width=5.8cm]{\pathPic/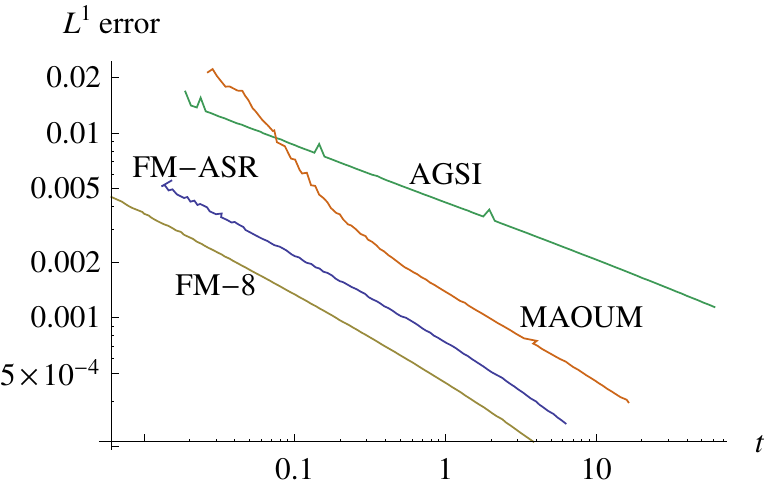}
\caption{
\label{fig:offsetted}
Level lines of the first test case \cite{SV03}. $L^\infty$ error (center) and averaged $L^1$ error (right), with respect to a reference solution, plotted as a function of CPU time. Data obtained by running the algorithms at $100$ different resolutions $n \times n$, with $n$ ranging from $61$ to $1201$. Best viewed in color. FM-ASR: blue. FM-8: brown. AGSI: green. MAOUM: orange. See Figures \ref{fig:spiraling}, \ref{fig:Vlad} and \ref{fig:Cohen} for the other tests. 
}
\end{figure}

Our first two tests involve asymmetric norms defined as the sum of an anisotropic euclidean norm and of a linear form. 

\begin{prop} 
\label{prop:OffsettedNorms}
Let $M\in S_2^+$ and let $\omega\in \R^2$ be such that $\<\omega,M \omega\> <1$. The map $F : \R^2 \to \R$ defined by 
$$
F(u) := \sqrt{\<u,M u\>} - \<\omega,M u\>
$$
is an asymmetric norm, which unit ball $\{z; \, F(z) \leq 1\}$ is an ellipse, not centered at the origin if $\omega \neq 0$. Furthermore the dual asymmetric norm $F^*$ has the same form, with parameters $M_*, \omega_*$ defined by 
$$
\delta:=1-\<\omega,M \omega\>, \quad M_* := \frac{\omega \omega^\trans + \delta M^{-1}}{\delta^2}, \quad \omega_* := - M_*^{-1} \omega/\delta.
$$
The minimization problem \iref{mint}, appearing in the Hopf-Lax update operator, and the evaluation of the predicate ``$u,v$ form an $F$-acute angle'', cost numerically $\cO(1)$ operations $+,\, -,\, \times,\, /, \,\sqrt{\cdot}$ among reals.
\end{prop}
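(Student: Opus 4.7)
The plan is to verify the four claims by essentially direct computation, exploiting the fact that $F$ is a ``Randers-type'' asymmetric norm (the sum of a Riemannian norm and a linear form). First, $F$ is an asymmetric norm: $1$-homogeneity is immediate; convexity follows because $u \mapsto \sqrt{\<u,Mu\>}$ is a norm (hence convex) and $u \mapsto -\<\omega,Mu\>$ is linear; and strict positivity on $\R^2\sm\{0\}$ follows from Cauchy--Schwarz in the $M$-inner product,
\[
\<\omega,Mu\> \leq \sqrt{\<\omega,M\omega\>}\sqrt{\<u,Mu\>} < \sqrt{\<u,Mu\>},
\]
using the hypothesis $\<\omega,M\omega\><1$.

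For the shape of the unit ball, I would change variables to $y := M^{1/2}u$, $w := M^{1/2}\omega$, so that $\|w\|^2 = 1-\delta$ and the condition $F(u) \leq 1$ becomes $\|y\| \leq 1 + \<w,y\>$. Squaring (the right-hand side is positive on the unit ball) gives a quadratic inequality. Since $\|w\|^2<1$, the matrix $I-ww^\trans$ is positive definite, and Sherman--Morrison lets me complete the square: the inequality rewrites as $\<y-c_y,\, Q_y(y-c_y)\> \leq 1$ with $c_y = w/\delta$ and $Q_y = \delta(I-ww^\trans)$. Reverting via $y=M^{1/2}u$, the unit ball is an ellipse of centre $M^{-1/2}c_y = \omega/\delta$, proving the off-centre ellipse claim.

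The dual formula is the main step. I would use the support function representation $F^*(v) = \sup_{u\in B_F}\<v,u\>$, split the unit ball as $B_F = c + E$ with $c = \omega/\delta$ and $E$ the origin-centred ellipsoid $\{u:\<u,Qu\>\leq 1\}$, and obtain
\[
F^*(v) = \<v,c\> + \sqrt{\<v,Q^{-1}v\>}.
\]
Matching this with the ansatz $\sqrt{\<v,M_*v\>} - \<\omega_*, M_*v\>$ forces $M_* = Q^{-1}$ and $M_*\omega_* = -c = -\omega/\delta$. A Sherman--Morrison computation, essentially inverting the expression for $Q_y$ and conjugating back by $M^{-1/2}$, yields the announced $M_* = (\omega\omega^\trans + \delta M^{-1})/\delta^2$, and $\omega_* = -M_*^{-1}\omega/\delta$ is then automatic. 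As a consistency check, computing $M_*^{-1}\omega$ via Sherman--Morrison gives $M_*^{-1}\omega = \delta^2 M\omega$, whence $\<\omega_*,M_*\omega_*\> = \<M\omega,\omega\> = 1-\delta < 1$, confirming that $F^*$ is itself of the announced form.

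For the numerical costs, observe that the inner objective of \iref{mint} reduces to $\alpha t + \beta + \sqrt{A t^2 + 2 B t + C}$, where the five scalars $\alpha,\beta,A,B,C$ are computed in $\cO(1)$ operations from $u,v,d_u,d_v,M,\omega$. Setting the derivative to zero and rearranging yields a single quadratic equation in $t$, solvable in closed form; comparing the interior critical value (if lying in $[0,1]$) with the boundary values $t\in\{0,1\}$ identifies the minimiser in $\cO(1)$. For the acuteness predicate, Lemma~\ref{lem:acuteCriterion} reduces the test to the sign of $\<v,\nabla F(u)\>$ and $\<u,\nabla F(v)\>$, and $\nabla F(u) = Mu/\sqrt{\<u,Mu\>} - M\omega$ is evaluated in $\cO(1)$. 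The main obstacle is the Sherman--Morrison bookkeeping needed to identify $M_*$ and $\omega_*$ exactly; everything else is routine convex analysis and elementary algebra.
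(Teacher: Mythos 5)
Your proposal is correct, and the first, second and fourth claims are handled essentially as in the paper (the paper reduces to $M=\Id$ by the substitution $u\mapsto M^{1/2}u$ and then argues exactly as you do: positivity from $\|\omega\|<1$, the inhomogeneous quadratic equation for the boundary, the gradient criterion of Lemma~\ref{lem:acuteCriterion} for acuteness, and the observation that the offset term merely shifts $d_u,d_v$ in \iref{mint}). Where you genuinely diverge is the dual formula, which is the heart of the statement. The paper starts from the reciprocal characterization $1/F^*(u)=\min\{F(v)\sep \<u,v\>=1\}$, applies the Kuhn--Tucker conditions to get $v/\|v\|-\omega=\lambda u$ with $\lambda=1/F^*(u)$, and extracts $\lambda$ as the positive root of the quadratic $\|\omega+\lambda u\|=1$; the identification of $M_*,\omega_*$ is then read off from that scalar formula. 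You instead use the support-function representation $F^*(v)=\sup_{u\in B_F}\<v,u\>$, write $B_F=c+E$ with $c=\omega/\delta$ and $E$ a centred ellipse, and get $F^*(v)=\<v,c\>+\sqrt{\<v,Q^{-1}v\>}$ at once, with Sherman--Morrison supplying $Q^{-1}=M_*$. Your route makes it structurally transparent \emph{why} the dual is again of Randers type (translation of the unit ball corresponds to adding a linear form to the support function), and your verification that $\<\omega_*,M_*\omega_*\>=1-\delta<1$ is a worthwhile check the paper omits; the cost is the matrix-inversion bookkeeping, which the paper's scalar KKT computation avoids. Both arguments are complete and correct.
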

\begin{proof}
Up to a linear change of variables (by $M^\frac 1 2$), we may assume that $M=\Id$, and thus $F(u) = \|u\|-\<\omega,u\>$, with $\|\omega\| <1$. 
The $1$-Homogeneity and the Convexity of $F$ are obvious. 
Furthermore $F(u) \geq \|u\| (1-\|\omega\|) \geq 0$, with equality if and only if $u=0$, which shows that $F$ is proper, hence is an asymmetric norm. 
The boundary of the compact and convex set $\{z\in \R^2;\, F(z) \leq 1\}$ is characterized by the inhomogeneous quadratic equation $\|u\|^2 = (1+\<\omega,u\>)^2$, which first degree term $u \mapsto 2 \<\omega,u\>$ is non-zero if $\omega\neq 0$. Hence this set is an ellipse, non-centered if $\omega\neq 0$.

Consider an arbitrary $u\in \R^2 \sm \{0\}$. Observing that $1/F^*(u) = \min \{F(v); \, \<u,v\> = 1\}$, we find using the Khun-Tucker conditions for this constrained optimization problem, that the minimizer $v$ satisfies $v/\|v\| - \omega = \lambda u$ for some $\lambda \in \R$. Taking the scalar product of this equation with $v$ we obtain $\lambda=1/F^*(u)$. On the other hand observing that $\|\omega+\lambda u\| = 1$, we obtain a quadratic equation which positive root is $\lambda$. The announced expression of $F^*(u)$ follows.

Since the norm $F$ is differentiable, evaluating the predicate ``$u,v$ form an acute angle'' is straightforward.
It was observed in the very first works on fast marching methods \cite{T95} that in the special case $\omega=0$, the minimization problem \iref{mint} amounts to solving a quadratic equation. Choosing a non-zero $\omega$ is equivalent to subtracting $\<\omega,u\>$ to $d_u$ (resp. $\<\omega,v\>$ to $d_v$), thus the problem \iref{mint} has an equally simple solution. 
\end{proof}

Our first test is a motion planning control problem, also discussed in \cite{SV03,BR06}. 
The Finsler metric is given by its dual: $\cF^*_z(u) = \|u\|+\<\omega(z),u\>$, where $\omega(x,y) = -\gamma \sin(4 \pi x) \sin (4 \pi y)$ and $\gamma = 0.9$. The speed profile $\{u\in \R^2; \, \cF_z(u) \leq 1\}$, in the control theoretic interpretation, is the euclidean unit ball translated by $\omega(z)$, see Figure \ref{fig:MetricTypes} (right): this could model a boat, able to move at unit speed on still water, but caught in an ocean current of speed $\omega(z)$. 
We compute the distance, i.e.\ the minimal travel time, to the center of the square domain $[-0.5,0.5]^2$. See e.g.\ \cite{M12} for a discussion on shortest path extraction. The maximum anisotropy ratio, $\kappa(\cF) = (1+\gamma)/(1-\gamma)=19$, is not small, but anisotropy is pronounced only on a small region, where $|\sin(4 \pi x) \sin (4 \pi y)|$ is close to $1$. As a result, the FM-8 delivers excellent results in terms of averaged $L^1$ error, and best results in terms of $L^\infty$ error for CPU times $\leq 1s$, after what (presumed) non-convergence begins to show and the FM-ASR outperforms it.
The FM-ASR is the best method among those which benefit from a convergence guarantee: for the prescribed tolerance $5 \times 10^{-3}$ on the $L^\infty$ error, the AGSI takes $11.3s$, at resolution $n=661$, while the FM-ASR takes $0.51s$, at resolution $n=375$, thus reducing CPU time by a factor 22. 

\begin{figure}
\includegraphics[width=4cm]{\pathPic/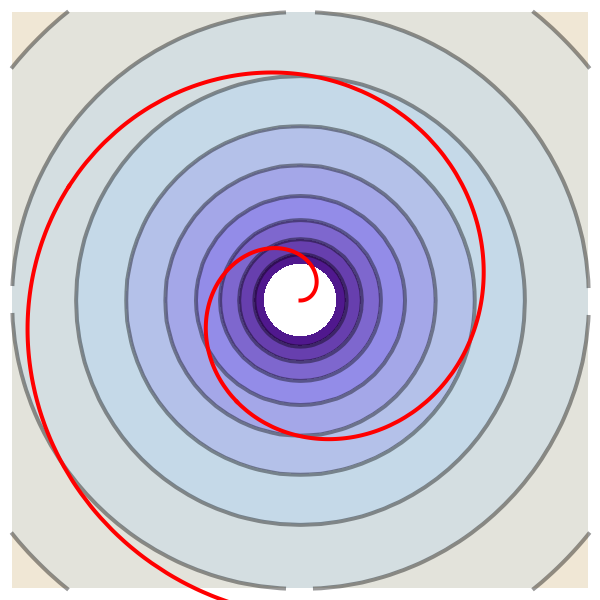}
\includegraphics[width=5.8cm]{\pathPic/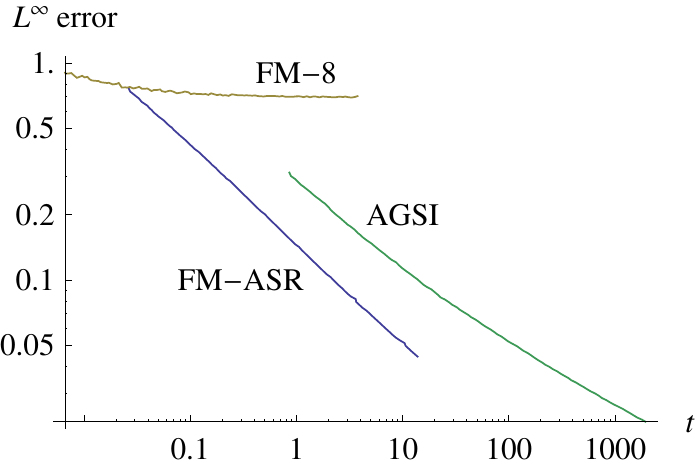}
\includegraphics[width=5.8cm]{\pathPic/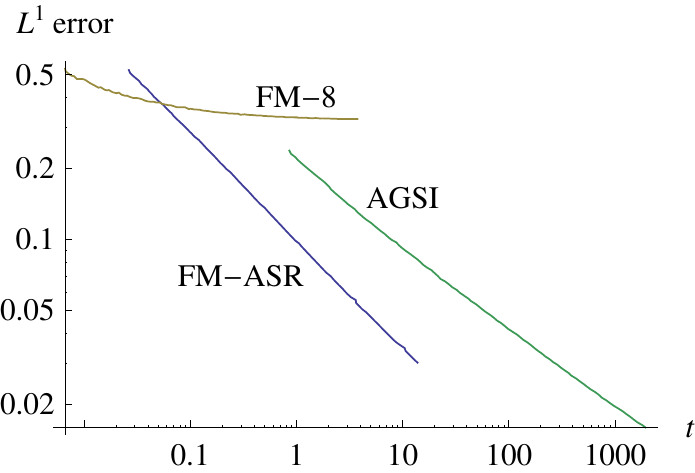}
\caption{
\label{fig:spiraling}
Level lines are circles in the the second test case. The shortest path joining $r e_\theta$ to the origin, where $e_\theta := (\cos \theta, \sin \theta)$, is the spiral $\vp \mapsto (r-\vp) e_{\theta-\vp}$, $\vp\in [0,r]$ (red curve).
}
\end{figure}

Our next test case is based on the following proposition.
\begin{prop}
\label{prop:exact}
Let $g \in C^0(\R_+,]-1,1[)$, and let $\cF$ be the Finsler metric on $\R^2\sm \{0\}$ defined by 
\be
\label{defFg}
\cF_z(u) = \|u\| - \frac{g(\|z\|)}{\|z\|} \< z^\perp , u\>,
\ee
where $z^\perp$ denotes the rotation of $z$ by $\pi/2$.
Then the length $D(z)$ of the shortest path joining $z$ to the origin, solution of the eikonal equation \iref{eikonal} on $\Omega := \R^2 \sm \{0\}$, is given by 
\be
\label{intSqrtG}
D(z) = \int_0^{\|z\|} \sqrt{1-g(r)^2}\, dr.
\ee
\end{prop}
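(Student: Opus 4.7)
The plan is to verify that the candidate function $D(z) := \int_0^{\|z\|}\sqrt{1-g(r)^2}\,dr$ satisfies the eikonal equation \iref{eikonal} on $\Omega = \R^2\setminus\{0\}$ with $\lim_{z\to 0}D(z)=0$, and to conclude by uniqueness of viscosity solutions \cite{L82}; since $D\in C^1(\R^2\setminus\{0\})$, this amounts to a pointwise algebraic check.

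First I would compute the gradient: radial symmetry gives $\nabla D(z)=\sqrt{1-g(\|z\|)^2}\,\hat z$ where $\hat z:=z/\|z\|$, so I need to show $\cF_z^*(p)=1$ for $p:=-\sqrt{1-g(\|z\|)^2}\,\hat z$. In the orthonormal basis $(\hat z,\hat z^\perp)$, writing any $v\in\R^2\setminus\{0\}$ as $v=a\hat z+b\hat z^\perp$, one has $\langle z^\perp,v\rangle=\|z\|b$, hence the definition \iref{defFg} gives
$$
\cF_z(v)=\sqrt{a^2+b^2}-g(\|z\|)b, \qquad \langle p,v\rangle=-\sqrt{1-g(\|z\|)^2}\,a.
$$
By $1$-homogeneity I may assume $a^2+b^2=1$; since $|g(\|z\|)|<1$ the denominator $\cF_z(v)$ is strictly positive, and the numerator is positive precisely when $a<0$. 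Writing $a=-\cos\alpha$, $b=\sin\alpha$ with $\alpha\in(-\pi/2,\pi/2)$, I reduce to maximizing $\sqrt{1-g^2}\cos\alpha/(1-g\sin\alpha)$ over $\alpha$. A one-line calculus computation (the derivative of $\cos\alpha/(1-g\sin\alpha)$ is proportional to $g-\sin\alpha$) locates the maximum at $\sin\alpha=g$, where the value is exactly $(1-g^2)/(1-g^2)=1$. This proves $\cF_z^*(p)=1$ on $\Omega$.

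Together with $D(z)\to 0$ as $z\to 0$, the comparison principle for static Hamilton--Jacobi equations identifies $D$ with the unique viscosity solution, which by \iref{EscapeTime}--\iref{eikonal} is the escape time to $\{0\}$. The main obstacle is the uniqueness step, since the ``boundary'' here is the single point $\{0\}$: one circumvents this by truncating $\Omega$ to the annulus $\{\e<\|z\|<R\}$, imposing the compatible Dirichlet data $D(\cdot)$ on the two circles, applying the standard comparison principle, and letting $\e\to 0$, $R\to\|z\|^+$. As a sanity check, one may also verify the matching lower bound directly: in polar coordinates $\gamma(t)=\rho(t)e_{\theta(t)}$ the integrand of $\length(\gamma)$ reads $\sqrt{\rho'^2+\rho^2\theta'^2}-g(\rho)\rho\theta'$, whose pointwise minimum in $\theta'$ (with $\rho'$ prescribed) equals $|\rho'|\sqrt{1-g(\rho)^2}$, attained precisely along the spiral described in Figure \ref{fig:spiraling}; integrating and partitioning any non-monotone path into radially monotone pieces reproduces $\int_0^{\|z\|}\sqrt{1-g(r)^2}\,dr$ as a lower bound on $\length(\gamma)$.
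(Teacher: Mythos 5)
Your pointwise verification that $\cF_z^*(-\nabla D(z))=1$ is correct (the reduction to maximizing $\sqrt{1-g^2}\cos\alpha/(1-g\sin\alpha)$ and the location of the maximum at $\sin\alpha=g$ both check out), but the route you present as primary --- identifying $D$ with the escape time by uniqueness of viscosity solutions --- has a genuine gap at exactly the step you flag. Comparison on the annulus $\{\e<\|y\|<R\}$ requires the two functions being compared to be ordered on \emph{both} circles. On the inner circle the escape time $T$ and the candidate $D$ are both $O(\e)$, which is harmless in the limit, but on the outer circle $\{\|y\|=R\}$ you have no a priori relation between $T$ and $D$: imposing ``the Dirichlet data $D(\cdot)$'' there only shows that $D$ solves a Dirichlet problem whose data is $D$, which says nothing about $T$ unless you already know $T=D$ on that circle, so the argument is circular. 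Letting $R\to\|z\|^+$ does not help, since $z$ must remain interior and the uncontrolled discrepancy on $\{\|y\|=R\}$ propagates inward. The inequality $T\ge D$ therefore has to come from somewhere other than comparison.

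It comes, in fact, from what you relegate to a ``sanity check'', which is essentially the paper's entire proof and should be promoted to the main argument. The paper works in Cartesian form: writing $z=r\omega$ with $\|\omega\|=1$, it sets $V(z):=g(r)\,\omega^\perp-\sqrt{1-g(r)^2}\,\omega$, notes $\|V(z)\|=1$, hence $\cF_z(u)\ge -\sqrt{1-g(r)^2}\,\<\omega,u\>$ with equality iff $u$ is positively proportional to $V(z)$; integrating this along any path from $z$ to the origin gives $\length(\gamma)\ge\int_0^{\|z\|}\sqrt{1-g(r)^2}\,dr$, and the integral curves of $V$ attain the bound. Your polar computation (minimizing $\sqrt{\rho'^2+s^2}-gs$ over $s=\rho\theta'$) is the same pointwise inequality in different coordinates, and together with attainment along the spiral it yields both bounds with no recourse to viscosity theory. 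Two small points if you write it up this way: first reduce to paths with $\gamma(t)\neq 0$ for $t<1$ so that polar coordinates are defined (the paper does this by discarding a terminal loop through the origin), and note that the partition into radially monotone pieces can be replaced by the one-line estimate $\int_0^1|\rho'|\sqrt{1-g(\rho)^2}\,dt\ge\bigl|\int_0^1\rho'\sqrt{1-g(\rho)^2}\,dt\bigr|$.
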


\begin{proof}
Let $z=r\omega$, where $r>0$ and $\|\omega\|=1$, and let $V(z) := g(r) \, \omega^\perp - \sqrt{1-g(r)^2} \,\omega$. We have \begin{equation}
\label{Fzg}
\cF_z(u) + \sqrt{1-g(r)^2} \<\omega,u\> = \|u\| - \<V(z), u\> \geq 0,
 \end{equation} 
 since $\|V(z)\|=1$, with equality if $u$ is positively proportional to $V(z)$. 

Let $\gamma \in C^1([0,1], \R^2)$ be such that $\gamma(0)=z$, $\gamma(1) = 0$. We may assume that $\gamma(t)\neq 0$ for all $t<1$, up to eliminating a loop starting and ending at the origin at the end of the path $\gamma$. For all $t\in [0,1[$, let $r(t) := \|\gamma(t)\|>0$ and let $\omega(t) := \gamma(t)/r(t)$. Note that $\<\omega(t),\gamma'(t)\> = r'(t)$. We obtain using \eqref{Fzg} 
$$
\length(\gamma) := \int_0^1 \cF_{\gamma(t)} (\gamma'(t)) \, dt 
\geq -\int_0^1 \sqrt{1-g(r(t))^2} \, \<\omega(t), \gamma'(t)\> \, dt 
= \int_1^0 \sqrt{1-g(r(t))^2} \, r'(t) \, dt,
$$
with equality if $\gamma'(t)$ is positively proportional to $V(\gamma(t))$ for all $t\in [0,1[$ (a path of minimal length can therefore be obtained by solving an ordinary differential equation). Observing that the right hand side of \iref{intSqrtG} and of the last equation coincide, we obtain the announced result.
\end{proof}

For our second test case, we chose the Finsler metric $\cF$ given by $g(r) := r/\sqrt{1+r^2}$ in \iref{defFg}, in such way that $D(z) = \arcsinh(\|z\|)$.
The problem was discretized on a the square domain $Q := [-r_0,r_0]^2$, where $r_0=10$, which contains the disk $B := \{z\in \R^2; \, \|z\| \leq r_0\}$. Due to the allure of the paths of minimal length for the continuous problem, spirals, see Figure \ref{fig:spiraling}, the convergence of the discrete solution towards the continuous one can only be guaranteed within the disk $B$. Grid points that do not belong to $B$ are thus rejected when computing errors. 
The maximum anisotropy ratio on the disk $B$ is $\kappa(\cF_{|B}) = (r_0+\sqrt{1+r_0^2})^2 \simeq 402$, which is quite pronounced, and unsurprisingly the FM-$8$ non-convergence shows early. 
Due to the strong anisotropy, the MAOUM produced huge stencils, leading to a memory footprint incompatible with our equipment. Unlike other test cases, the AGSI produced here the most accurate results \emph{resolution-wise}: for the prescribed tolerance $5 \times 10^{-2}$ on the $L^\infty$ error, the AGSI takes $119s$, at resolution $n=435$, while the FM-ASR takes $10.7s$, at resolution $n=1069$.
Despite the higher resolution, the FM-ASR strongly reduces the CPU time needed to achieve a prescribed $L^\infty$ error bound, here by a factor 11.

Two more test cases, involving Riemannian metrics, are illustrated on Figures \ref{fig:Vlad} and \ref{fig:Cohen}. They are  inspired by seismic imaging and medical image segmentation respectively, and were originally proposed in \cite{SV03} and \cite{BC10}, see also \cite{M12}. The efficiency of the FM-ASR and of the FM-LBR \cite{M12} are comparable, and their superiority over alternative methods is here unquestionable. In the last test these two methods are in a class of their own, often reducing CPU time by four(!) orders of magnitude in comparison with their alternatives, for a target $L^\infty$ error bound.

\begin{figure}
\includegraphics[width=4cm]{\pathPic/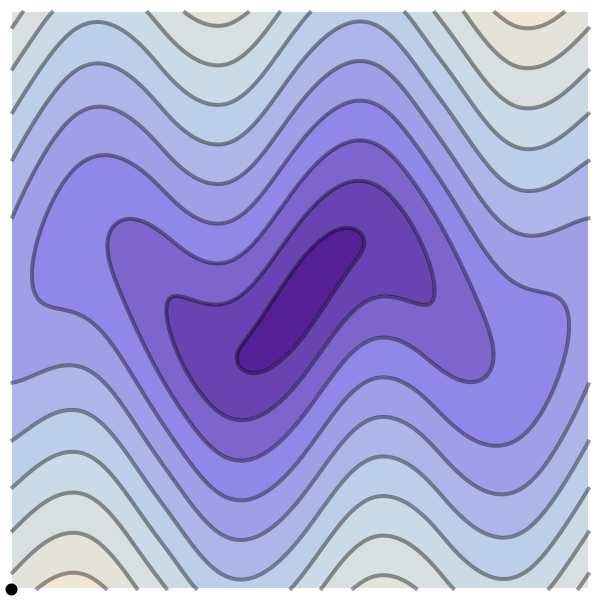}
\includegraphics[width=5.8cm]{\pathPic/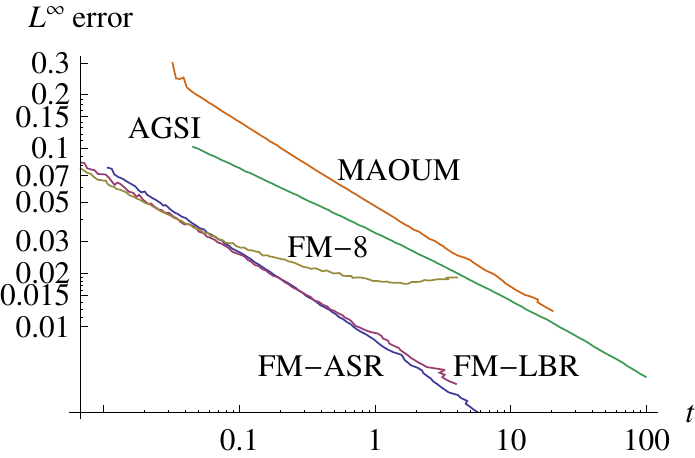}
\includegraphics[width=5.8cm]{\pathPic/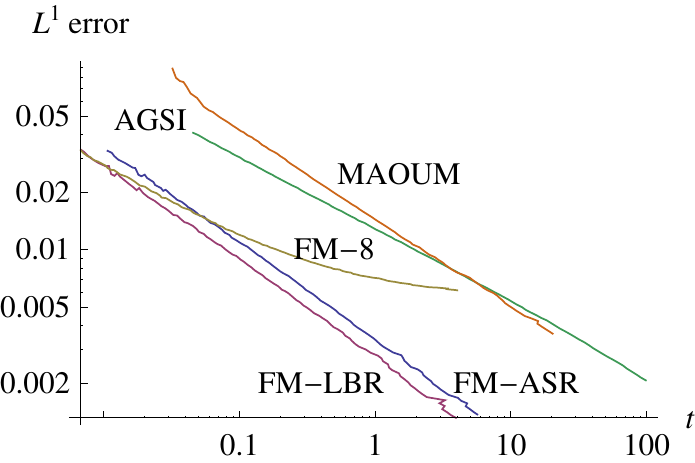}
\caption{
\label{fig:Vlad}
Test case inspired by seismic imaging (taken from \cite{SV03}, Figure 6, top left), with moderate anisotropy $\kappa(\cF)=4$. Riemannian metric $\cF_z(u) := \sqrt{\<u, \cM(z) u\>}$, where $\cM(z)$ has eigenvalues $0.8^{-2},0.2^{-2}$, the former associated to the eigenvector $(1, (\pi/2) \cos (4 \pi x))$. Domain $[-0.5,0.5]^2$.
Target $L^\infty$ error bound $2\times 10^{-2}$ is met by the AGSI in CPU time $4.3s$, at resolution $n=375$, and by the FM-ASR in CPU time $0.18s$ ($24$ times less), at resolution $n=239$.
}
\end{figure}

\begin{figure}
\includegraphics[width=4cm]{\pathPic/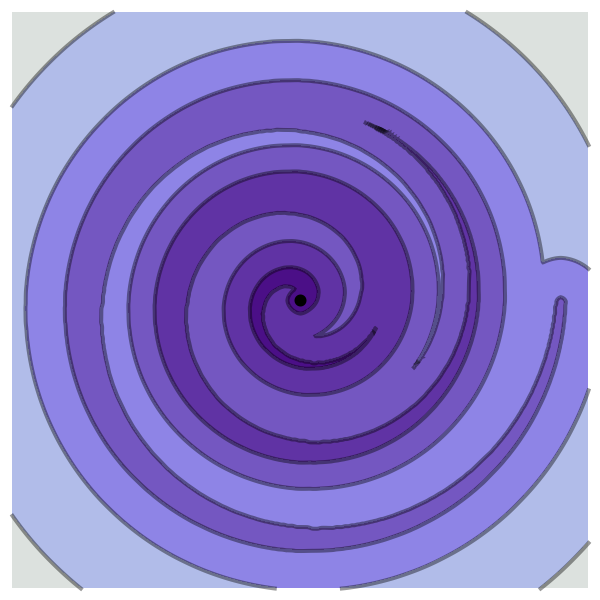}
\includegraphics[width=5.8cm]{\pathPic/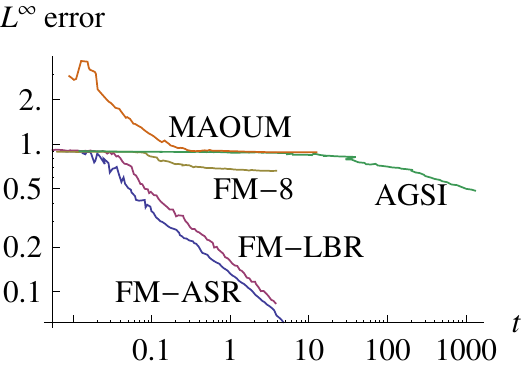}
\includegraphics[width=5.8cm]{\pathPic/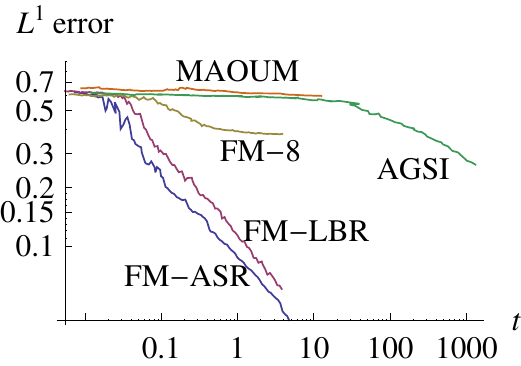}
\caption{
\label{fig:Cohen}
Test case inspired by tubular image segmentation, with strong anisotropy $\kappa(\cF)=100$, Figure 4 in \cite{BC10}. The Riemannian metric is equal to the euclidean norm, except on a band of width $1/100$ along a spiraling curve $\Gamma$ where it has eigenvalues $1/100^2, 1$, the former associated to the tangent vector to $\Gamma$. See \cite{M12} for details.
Target $L^\infty$ error bound $0.5$ is met by the AGSI in CPU time $1015s$, at resolution $n=1135$, and by the FM-ASR in CPU time $0.054s$ ($22700$ times less), at resolution $n=157$.
}
\end{figure}



\section*{Conclusion}

We introduced in this paper a variant of the fast marching algorithm, which applies to arbitrary Finsler metrics, on two dimensional domains discretized on a grid, and which is particularly efficient in the context of large anisotropies. Its complexity depends only (poly-)logarithmically on the anisotropy ratio $\kappa(\cF)$ of the given Finsler metric, in an average sense over grid orientations, whereas earlier methods had a linear or polynomial dependence in this parameter. 
Numerical experiments show a reduction by an order of magnitude, or more, of the CPU time required to meet a target error bound.
Future work will be devoted to the analysis of the accuracy of this algorithm, its extension to higher dimensions and to triangulated domains, and its application to image analysis.

\appendix


\paragraph{Appendix : Proof of Proposition \ref{prop:upwind}.\\}
\noindent
The optimization problem of interest \iref{mint} is the minimization of a continuous convex function on a compact interval, hence there exists at least a minimizer. 
Let $G$ be the asymmetric norm defined by $G(x,y) := F(x u+ y v)$, for all $(x,y)\in \R^2$. Let also $D := (d_u,d_v)$ and $\un := (1,1)$.
The problem \iref{mint} is equivalent to
\be
\label{minOmega}
\min \{ \<\omega,D\> + G(\omega);\, \omega \in \R_+^2, \, \<\omega,\un\>=1\}.
\ee
The assumption that $0$ and $1$ are not minimizers of the original problem \iref{mint}, implies that 
the minimum \iref{minOmega} is not attained when $\omega$ is equal to $e_x := (1,0)$ or $e_y := (0,1)$. 
We denote by $\omega$ a minimizer of \iref{minOmega}, and observe that both components of $\omega$ are positive.
The Kuhn-Tucker relations, for this constrained optimization problem, state that there exists a scalar $\lambda\in \R$, the Lagrange multiplier, and an element $V \in \partial G(\omega)$ such that 
\be
\label{LagrangeMult}
D+V = \lambda \un.
\ee
We denoted by $\partial G(\omega)$ the sub-gradient of the convex function $G$ at the point $\omega$; if $G$ is differentiable at $\omega$, then $\partial G(\omega) = \{\nabla G(\omega)\}$. Since $G$ is $1$-homogeneous, we have $\<\omega,V\> = G(\omega)$, by Euler's homogeneous function theorem. 
Taking the scalar product of \iref{LagrangeMult} with $\omega$ we obtain
$$
\lambda = \lambda \<\omega, \un\> 
= \<\omega,D\> + \<\omega,V\>
= \<\omega,D\> + G(\omega) = d_w.
$$
Injecting this relation in \iref{LagrangeMult}, we obtain $V = d_w \un - D$. 
In order to conclude the proof, we need to show that both $d_w - d_u = \<e_x,V\>$ and $d_w-d_v = \<e_y,V\>$ are positive.
Since the minimum \iref{minOmega} is not attained for $\omega = e_x$, we have
\be
\label{NotAtBounds}
d_u+G(e_x) > d_w 
=  \lambda \< e_x, \, \un\> 
= \<e_x,D\> + \<e_x,V\>
=d_u + \<e_x,V\>,
\ee
thus $G(e_x)>\<e_x,V\>$.
On the other hand, denoting by $(\alpha, \beta)$ the components of $\omega$, and recalling that they are positive, we obtain
\begin{equation}
\label{aevbev}
\alpha \<e_x,V\> +\beta \<e_y, V\> = \<\omega, V\> = G(\omega) = G(\alpha e_x+\beta e_y) \geq \alpha G(e_x),
\end{equation}
where for the last inequality we used that $e_x,e_y$ form a $G$-acute angle, since $u,v$ form an $F$-acute angle.
Finally, combining \eqref{NotAtBounds} and \eqref{aevbev} we obtain
$$
G(e_x) > \<e_x,V\> \geq G(e_x) - (\beta/\alpha) \<e_y,V\>,
$$
hence $\<e_y,V\> >0$. Likewise $\<e_x,V\> > 0$, which concludes the proof.


\begin{thebibliography}{99}

\bibitem{AM08} K. Alton, I. M. Mitchell, {\it Fast Marching Methods for Stationary Hamilton-Jacobi Equations
 with Axis-Aligned Anisotropy},
 SIAM Journal of Numerical Analysis, 47:1, pp. 363-385, 2008.

\bibitem{AltonMitchell12}
 K. Alton, I. M. Mitchell,
{\it An Ordered Upwind Method with Precomputed Stencil and Monotone Node Acceptance
for Solving Static Hamilton-Jacobi Equations},
Journal of Scientific Computing, 51:2, pp. 313-348, 2012.


\bibitem{BC10} F. Benmansour, L. D. Cohen, {\it Tubular Structure Segmentation Based on Minimal Path Method and Anisotropic Enhancement}, International Journal of Computer Vision, 92(2), 192-210, 2010.



\bibitem{BR06} F. Bornemann, C. Rasch, {\it Finite-element Discretization of Static Hamilton-Jacobi Equations based on a Local Variational Principle}, Computing and Visualization in Science, 9(2), 57-69, 2006.


\bibitem{KushnerDupuis92} H.J. Kushner, P.G. Dupuis,
{\it Numerical Methods for Stochastic Control Problems
in Continuous Time}, Academic Press, New York, 1992.

\bibitem{L03} Q. Lin, \emph{Enhancement, extraction, and visualization of 3D volume data}, Ph.D. Thesis, Linkopings Universitet, 2003.

\bibitem{L82} P.L. Lions, {\it Generalized solutions of Hamilton-Jacobi equations}, Pitman, Boston, 1982.


\bibitem{MPAT08} J. Melonakos, E. Pichon, S. Angenent, A. Tannenbaum, {\it Finsler Active Contours}, IEEE Transactions on Pattern Analysis and Maching Intelligence, 30(3), pp. 412-423, 2008

\bibitem{M12} J.-M. Mirebeau, {\it Anisotropic Fast Marching on Cartesian Grids, using Lattice Basis Reduction}, preprint, 2012.

\bibitem{M12b} J.-M. Mirebeau, {\it On the Accuracy of Anisotropic Fast Marching}, preprint.


\bibitem{OTV09} A. M. Oberman, R. Takei and A. Vladimirsky, {\it Homogenization of Metric Hamilton-Jacobi Equations}, Multiscale Modeling \& Simulation, 8(1), 269-295, 2009

\bibitem{PPKC10}  G. Peyr{\'e}, M. P{\'e}chaud, R. Keriven, L. D. Cohen, {\it Geodesic Methods in Computer Vision and Graphics}, Foundations and Trends in Computer Graphics and Vision, 5(3-4), 197-397, 2010.

\bibitem{RS09} C. Rasch, T. Satzger, {\it Remarks on the $\cO(N)$ Implementation of the Fast Marching Method}, IMA Journal of Numerical Analysis, 29, 806-813, 2009. 

\bibitem{SKD07} M. Sermesant, E. Konukoglu, H. Delingette, {\it An anisotropic multi-front fast marching method for real-time simulation of cardiac electrophysiology}, Proc of Functional Imaging and Modeling of the Heart, 2007.

\bibitem{SethBook96} J.A. Sethian,
{\it Level Set Methods and Fast Marching Methods: Evolving Interfaces in
Computational Geometry, Fluid Mechanics, Computer Vision and Materials
Sciences}, Cambridge University Press, 1996.

\bibitem{S99} J. A. Sethian, {\it Level Set Methods and Fast Marching Methods},
J.A. Sethian, Cambridge University Press, 1999.

\bibitem{SV03} J. A. Sethian, A. Vladimirsky, {\it Ordered Upwind Methods for Static Hamilton-Jacobi Equations : Theory and Algorithms}, SIAM Journal of Numerical Analysis, 41(1), 325-363, 2003

\bibitem{TsaiChengOsherZhao03} 
Y.-H.R. Tsai, L.-T. Cheng , S. Osher, and H.-K. Zhao,
{\it Fast sweeping algorithms for a class of Hamilton-Jacobi equations},
SIAM Journal on Numerical Analysis, 41:2, pp.659-672, 2003.

\bibitem{T95} J. N. Tsitsiklis, {\it Efficient algorithms for globally optimal trajectories}, IEEE Transactions on Automatic Control, 40(9), 1528-1538, 1995.

\bibitem{Vlad08} A. Vladimirsky,
{\it Label-setting methods for Multimode Stochastic Shortest Path
problems on graphs}, Mathematics of Operations Research 33(4), pp. 821-838, 2008.

\bibitem{ZSN09} C. Zach, L. Shan, M. Niethammer, {\it Globally Optimal Finsler Active Contours}, vol 5748, pp 552-561, Springer Berlin Heidelberg

\bibitem{Zhao05} H. Zhao, {\it A Fast Sweeping Method for Eikonal Equations}, Mathematics of Computation, 74(250), 603-627, 2005

\end{thebibliography}
\end{document}